\DeclareMathOperator{\res}{res}
\DeclareMathOperator{\Area}{Area}
\DeclareMathOperator{\sdet}{sdet}
\DeclareMathOperator{\rk}{rk}
\DeclareMathOperator{\tr}{tr}
\DeclareMathOperator{\str}{str}
\DeclareMathOperator{\Ad}{Ad}
\newcommand\dets{{\det}_*}
\newcommand\RS{\textrm{RS}}
\newcommand\grr{\textrm{gr}}
\newcommand\Hurw{\mathrm{Hurw}}
\newcommand\Riem{\mathrm{Riem}}
\newcommand\Epst{\mathrm{Epst}}
\newcommand\goe{\mathfrak g}
\newcommand\moe{\mathfrak m}
\newcommand\noe{\mathfrak n}
\newcommand\zoe{\mathfrak z}
\newcommand\ii{\mathbf i}
\DeclareMathOperator{\GL}{GL}
\DeclareMathOperator{\Aut}{Aut}
\newcommand\Z{\mathbb Z}
\newcommand\N{\mathbb N}
\newcommand\R{\mathbb R}
\newcommand\C{\mathbb C}
\newcommand\Q{\mathbb Q}
\newcommand\I{\mathrm{I}}
\newcommand\II{\mathrm{I\!I}}
\newcommand\III{\mathrm{I\!I\!I}}
\theoremstyle{plain}
  \newtheorem{theorem}{Theorem}
  \newtheorem*{theorem*}{Theorem}
  \newtheorem*{corollary*}{Corollary}
  \newtheorem{lemma}{Lemma}
\theoremstyle{definition}
\theoremstyle{remark}
  \newtheorem{remark}[theorem]{Remark}
\begin{document}

\title[Analytic torsion of nilmanifolds with (2,3,5) distributions]{Analytic torsion of nilmanifolds\\ with (2,3,5) distributions}

\author{Stefan Haller}

\address{Stefan Haller,
         Department of Mathematics,
         University of Vienna,
         Oskar-Morgenstern-Platz 1,
         1090 Vienna,
         Austria.}

\email{stefan.haller@univie.ac.at}

\begin{abstract}
	We consider generic rank two distributions on 5-dimensional nilmanifolds, and show that the analytic torsion of their Rumin complex coincides with the Ray--Singer torsion.
\end{abstract}

\keywords{Analytic torsion; Rumin complex; Rockland complex; Generic rank two distribution; (2,3,5) distribution; Sub-Riemannian geometry; Nilmanifold}

\subjclass[2010]{58J52 (primary) and 11M41, 53C17, 58A30, 58J10, 58J42}


\maketitle

\section{Introduction}\label{S:intro}

	The classical Ray--Singer analytic torsion \cite{RS71} is a spectral invariant extracted from the de~Rham complex of a closed manifold.
	The celebrated Cheeger--M\"uller theorem \cite{BZ92,Ch77,Ch79,M78} asserts that this analytic torsion essentially coincides with the Reidemeister torsion, a topological invariant.

	Rumin and Seshadri \cite{RS12} have introduced an analytic torsion of the Rumin complex on contact manifolds \cite{R90,R94,R00} and showed that it coincides with the Ray--Singer torsion for 3-dimensional CR Seifert manifolds.
	Further computations for contact spheres and lens spaces have been carried out by Kitaoka \cite{K20,K22}.
	Recently, Albin and Quan \cite{AQ22} proved that the Rumin--Seshadri analytic torsion differs from the Ray--Singer torsion by the integral of a local quantity, which yet has to be identified explicitly.

	An analytic torsion for the Rumin complex on more general filtered manifolds \cite{R99,R01,R05,FT23} has been proposed in \cite{H22}.
	This analytic torsion is only defined if the osculating algebras of the filtered manifold have pure cohomology.
	The latter assumption appears to be rather restrictive \cite[Section~3.7]{H22}.
	We are only aware of three types of filtered manifolds with this property:
	trivially filtered manifolds giving rise to the Ray--Singer torsion, contact manifolds giving rise the Rumin--Seshadri torsion, and generic rank two distributions in dimension five which are also known as (2,3,5) distributions \cite{H22}.

	A generic rank two distributions in dimension five is a rank two subbundle $\mathcal D$ in the tangent bundle of a 5-manifold $M$ such that Lie brackets of sections of $\mathcal D$ span a subbundle $[\mathcal D,\mathcal D]$ of rank three, and triple brackets of sections of $\mathcal D$ span all of the tangent bundle, $[\mathcal D,[\mathcal D,\mathcal D]]=TM$.
	These geometric structures have first been studied by Cartan \cite{C10}.
	The Lie bracket of vector fields induces a fiberwise Lie bracket on the associated graded bundle,
	\[
		\mathfrak tM=\tfrac{TM}{[\mathcal D,\mathcal D]}\oplus\tfrac{[\mathcal D,\mathcal D]}{\mathcal D}\oplus\mathcal D.
	\]
	This is a locally trivial bundle of graded Lie algebras over $M$ called the bundle of osculating algebras.
	Its fibers are all isomorphic to the 5-dimensional graded nilpotent Lie algebra 
	\[
		\goe=\goe_{-3}\oplus\goe_{-2}\oplus\goe_{-1}
	\] 
	with graded basis $X_1,X_2\in\goe_{-1}$, $X_3\in\goe_{-2}$, $X_4,X_5\in\goe_{-3}$ and brackets 
	\begin{equation}\label{E:brackets}
		[X_1,X_2]=X_3,\quad[X_1,X_3]=X_4,\quad[X_2,X_3]=X_5.
	\end{equation}

	The simply connected Lie group with Lie algebra $\goe$ will be denoted by $G$.
	The left invariant 2-plane field spanned by $X_1$ and $X_2$ provides a basic example of a (2,3,5) distribution on $G$.
	Unlike contact or Engel structures, (2,3,5) distributions do have local geometry.
	A distribution of this type is locally diffeomorphic to the aforementioned left invariant distribution on $G$ if and only if Cartan's \cite{C10} harmonic curvature tensor, a section of $S^4\mathcal D^*$, vanishes.
	This curvature tensor is constructed using an equivalent description of (2,3,5) distributions as regular normal parabolic geometries of type $(G_2,P)$ where $G_2$ denotes the split real form of the exceptional Lie group and $P$ denotes the parabolic subgroup corresponding to the longer root, see \cite{C10,S08} or \cite[Section~4.3.2]{CS09}.
	Generic rank two distributions in dimension five have attracted quite some attention recently, cf.~\cite{AN14,AN18,BHN18,BM09,BH93,CN09,CSa09,DH19,GPW17,H22,H23,HS09,HS11,LNS17,N05,S06,S08,SW17,SW17b}.

	The Rumin complex associated to a (2,3,5) distribution on a 5-manifold $M$ is a natural sequence of higher order differential operators
	\[
		\cdots\to\Gamma\bigl(\mathcal H^q(\mathfrak tM)\bigr)\xrightarrow{D_q}\Gamma\bigl(\mathcal H^{q+1}(\mathfrak tM)\bigr)\to\cdots
	\]
	where $\mathcal H^q(\mathfrak tM)$ denotes the vector bundle obtained by passing to the fiberwise Lie algebra cohomology of $\mathfrak tM$ with trivial coefficients.
	The Betti numbers are $\rk\mathcal H^q(\mathfrak tM))=\dim H^q(\goe)=1,2,3,2,1$ for $q=0,\dotsc,4$ and the Heisenberg order of the Rumin differential $D_q$ is $k_q=1,3,2,3,1$ for $q=0,\dotsc,4$, see \cite[Section~5]{BENG11} and \cite[Example~4.21]{DH22}.
	The Rumin differentials form a complex, $D_{q+1}D_q=0$, that computes the de~Rham cohomology of $M$.
	Actually, there exist injective differential operators $L_q\colon\Gamma(\mathcal H^q(\mathfrak tM))\to\Omega^q(M)$, embedding the Rumin complex as a subcomplex in the de~Rham complex and inducing  isomorphisms on cohomology.
	Twisting with a flat complex vector bundle $F$ we obtain a complex of differential operators
	\begin{equation}\label{E:Rumin.M}
		\cdots\to\Gamma\bigl(\mathcal H^q(\mathfrak tM)\otimes F\bigr)\xrightarrow{D_q}\Gamma\bigl(\mathcal H^{q+1}(\mathfrak tM)\otimes F\bigr)\to\cdots
	\end{equation}
	computing $H^*(M;F)$, the de~Rham cohomology of $M$ with coefficients in $F$.
	Rumin has shown that the sequence \eqref{E:Rumin.M} becomes exact on the level of the Heisenberg principal symbol, see \cite[Theorem~3]{R99}, \cite[Theorem~5.2]{R01}, or \cite[Corollary~4.18]{DH22}.
	Hence, the Rumin complex is a Rockland \cite{R78} complex, the analogue of an elliptic complex in the Heisenberg calculus, see \cite[Section~2.3]{DH22} for more details.

	A fiberwise graded Euclidean inner product $g$ on $\mathfrak tM$ and a fiberwise Hermitian inner product $h$ on $F$ give rise to $L^2$ inner products on $\Gamma(\mathcal H^q(\mathfrak tM)\otimes F)$ which in turn provide formal adjoints $D_q^*$ of the Rumin differentials in \eqref{E:Rumin.M}.
	Assuming $M$ to be closed, the operator $D_q^*D_q$ has an infinite dimensional kernel if $q>0$ but the remaining part of its spectrum consists of isolated positive eigenvalues with finite multiplicities only.
	Moreover, $(D_q^*D_q)^{-s}$ is trace class for $\Re s>10/2k_q$.
	The number ten appears here because this is the homogeneous dimension of the filtered manifold $M$.
	Furthermore, the function $\tr(D_q^*D_q)^{-s}$ admits an analytic continuation to a meromorphic function on the entire complex plane which is holomorphic at $s=0$, see \cite[Remark~2.9]{H22}.
	This permits to define the zeta regularized determinant $\dets|D_q|$ by
	\begin{equation}\label{E:edtDq.def}
		\log\dets|D_q|:=-\tfrac12\tfrac\partial{\partial s}\big|_{s=0}\tr(D_q^*D_q)^{-s}.
	\end{equation}
	The notation $\dets$ indicates that the zero eigenspace does not contribute, i.e., we are considering the regularized product of nonzero eigenvalues.
	Correspondingly, the complex powers are defined to vanish on the kernel of $D_q^*D_q$.
	The analytic torsion $\tau(M,\mathcal D,F,g,h)$ is the graded determinant of the Rumin complex, i.e.,
	\begin{equation}\label{E:tau.def}
		\log\tau(M,\mathcal D,F,g,h):=\sum_{q=0}^4(-1)^q\log\dets|D_q|.
	\end{equation}
	By Poincar\'e duality, see \cite[Section~2]{R99}, \cite[Proposition~2.8]{R01}, or \cite[Section~3.3]{H22}, we have $\dets|D_0|=\dets|D_4|$ and $\dets|D_1|=\dets|D_3|$, provided $h$ is parallel.
	Hence, in this (unitary) case the torsion may be expressed in terms of three determinants,
	\[
		\tau(M,\mathcal D,F,g,h)=\frac{\dets^2|D_0|\cdot\dets|D_2|}{\dets^2|D_1|}.
	\]

	Since $\goe$ has pure cohomology, the grading automorphism $\phi_\tau\in\Aut_\grr(\goe)$ acts as a scalar on each cohomology, namely by $\tau^{N_q}$ on $H^q(\goe)$ where $N_q=0,1,4,6,9,10$ for $q=0,\dotsc,5$.
	These numbers are related to the Heisenberg orders of the Rumin differentials via the equation $k_q=N_{q+1}-N_q$.
	We fix natural numbers $a_q$ such that $\kappa:=a_qk_q$ is independent of $q$.
	The smallest possible choice would be $a_q=6,2,3,2,6$ with $\kappa=6$.
	In general, $\kappa$ is a multiple of six.
	Then the Rumin--Seshadri operators
	\begin{equation}\label{E:Delta.def}
		\Delta_q:=\bigl(D_{q-1}D_{q-1}^*\bigr)^{a_{q-1}}+\bigl(D_q^*D_q\bigr)^{a_q}
	\end{equation} 
	are all of Heisenberg order $2\kappa$.
	These operators are analytically much better behaved than $D_q^*D_q$.
	Indeed, $\Delta_q$ is a Rockland \cite{R78} operator and, thus, admits a parametrix in the Heisenberg calculus \cite{M82,EY19,DH22}.
	Moreover, $\Delta_q^{-s}$ is trace class for $\Re s>10/2\kappa$, and the zeta function $\tr\Delta_q^{-s}$ admits an analytic continuation to a meromorphic function on the entire complex plane which is holomorphic at $s=0$, see \cite[Corollary~2]{DH20}.
	The analytic properties of the operator $D_q^*D_q$ stated in the preceding paragraph can readily be deduced from the corresponding properties of $\Delta_q$, see \cite[Remark~2.9]{H22} for more details.
	Putting
	\begin{equation}\label{E:zeta.M.def}
		\zeta_{M,\mathcal D,F,g,h}(s):=\str(N\Delta^{-s}):=\sum_{q=0}^5(-1)^qN_q\tr(\Delta_q^{-s})
	\end{equation}
	the analytic torsion of the Rumin complex can be expressed in the form
	\begin{equation}\label{E:tau.via.zeta}
		\tau(M,\mathcal D,F,g,h)=\exp\left(\tfrac1{2\kappa}\zeta'_{M,\mathcal D,F,g,h}(0)\right)
	\end{equation}
	which is analogous to the formulas for the Ray--Singer torsion in \cite[Definition~1.6]{RS71} and the Rumin--Seshadri torsion in \cite[p.~728]{RS12}, see \cite[Eq.~(34)]{H22} for more details.

	It turns out to be convenient to incorporate the zero eigenspaces of $\Delta_q$ and consider the analytic torsion of the Rumin complex as a norm $\|-\|^{\sdet H^*(M;F)}_{\mathcal D,g,h}$ on the graded determinant line
	\[
		\sdet H^*(M;F)=\bigotimes_{q=0}^5\bigl(\det H^q(M;F)\bigr)^{(-1)^q}.
	\]
	Basic properties of this torsion have been established in \cite{H22}.
	In the acyclic case, it reduces to (the reciprocal of) $\tau(M,\mathcal D,F,g,h)$.
	The Ray--Singer torsion, too, is best regarded as a norm on this graded determinant line \cite{BGS88,BZ92}.
	We will denote it by $\|-\|_\RS^{\sdet H^*(M;F)}$.
	The Ray--Singer torsion does not depend on metric choices since the dimension of $M$ is odd.

	In this paper we will determine the analytic torsion of the Rumin complex on nilmanifolds $\Gamma\setminus G$ where $\Gamma$ is a lattice in $G$, i.e., a cocompact discrete subgroup in $G$.
	We consider any left invariant (2,3,5) distribution $\mathcal D_G$ on $G$ and any left invariant fiberwise graded Euclidean inner product on $\mathfrak tG$.
	These descend to a (2,3,5) distribution $\mathcal D_{\Gamma\setminus G}$ on the nilmanifold $\Gamma\setminus G$ and a fiberwise graded Euclidean inner product $g_{\Gamma\setminus G}$ on $\mathfrak t(\Gamma\setminus G)$.
	For a unitary representation $\rho\colon\Gamma\to U(k)$ we let $h_\rho$ denote the canonical (parallel) fiberwise Hermitian inner product on the associated flat complex vector bundle $F_\rho:=G\times_\rho\C^k$ over $\Gamma\setminus G$.

	We are now in a position to state our main result.

	\begin{theorem*}
		If $\chi\colon\Gamma\to U(1)$ is a nontrivial unitary character, then the twisted Rumin complex on the nilmanifold $\Gamma\setminus G$ associated with $\mathcal D_{\Gamma\setminus G}$ and $F_\chi$ is acyclic and its analytic torsion is trivial, that is,
		\[
			\tau\bigl(\Gamma\setminus G,\mathcal D_{\Gamma\setminus G},F_\chi,g_{\Gamma\setminus G},h_\chi\bigr)=1.
		\]
	\end{theorem*}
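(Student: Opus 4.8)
The plan is to establish acyclicity by a homotopy theoretic argument, to diagonalize the Rumin--Seshadri operators by harmonic analysis on the nilmanifold, and finally to evaluate the resulting zeta function by an explicit computation with classical special functions.

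Since $G$ is contractible, $\Gamma\setminus G$ is a $K(\Gamma,1)$ and hence $H^*(\Gamma\setminus G;F_\chi)\cong H^*(\Gamma;\C_\chi)$. As $\Gamma$ is nilpotent, $\chi$ is trivial on the commutator subgroup, and a standard induction on the nilpotency class of $\Gamma$, using the Hochschild--Serre spectral sequence for an infinite cyclic subgroup of the centre, shows $H^*(\Gamma;\C_\chi)=0$ for every nontrivial $\chi$. Thus the twisted Rumin complex is acyclic, and, in view of \eqref{E:tau.def} and the Poincar\'e duality identities $\dets|D_0|=\dets|D_4|$ and $\dets|D_1|=\dets|D_3|$ (valid as $h_\chi$ is parallel), it remains to prove $2\log\dets|D_0|-2\log\dets|D_1|+\log\dets|D_2|=0$, equivalently $\zeta'_{\Gamma\setminus G,\mathcal D_{\Gamma\setminus G},F_\chi,g_{\Gamma\setminus G},h_\chi}(0)=0$.

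Because $\mathcal D_G$ and the inner product on $\mathfrak tG$ are left invariant, the operators $D_q$ and $\Delta_q$ are left invariant differential operators on $G$; they descend to $\Gamma\setminus G$ and act on $L^2(\Gamma\setminus G,F_\chi)$ through the right regular representation. By Kirillov's orbit method together with the known description of the spectrum of a nilmanifold (Richardson, Howe, Corwin--Greenleaf) one has a Hilbert space decomposition $L^2(\Gamma\setminus G,F_\chi)=\bigoplus_\pi\mathcal H_\pi^{\oplus m_\chi(\pi)}$ with finite multiplicities $m_\chi(\pi)$, the trivial representation being absent because $\chi\neq1$. On the $\pi$-isotypic component each $\Delta_q$ acts as $\Delta_q(\pi)\otimes\id$, where $\Delta_q(\pi)$ is the Rumin--Seshadri operator of the Rockland complex whose $q$-th term is $H^q(\goe)\otimes\mathcal H_\pi$ and whose differentials $D_q(\pi)$ are obtained by evaluating the left invariant Rumin differentials of $G$ in the representation $\pi$; this complex is acyclic because it computes $H^*(\goe;\mathcal H_\pi)=0$ for $\pi$ nontrivial. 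Consequently $\zeta_{\Gamma\setminus G,\mathcal D_{\Gamma\setminus G},F_\chi,g_{\Gamma\setminus G},h_\chi}(s)=\sum_\pi m_\chi(\pi)\,\str\bigl(N\Delta(\pi)^{-s}\bigr)$, and it suffices to control the derivative of this series at $s=0$.

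The representation theoretic zeta functions $\str(N\Delta(\pi)^{-s})$ are computed by stratifying $\widehat G$ according to coadjoint orbit type. Since the centre $Z(\goe)=\goe_{-3}$ is two-dimensional and $\dim\goe=5$, every coadjoint orbit of $\goe$ has dimension at most two; thus each nontrivial $\pi$ is either a character of $G$, that is, a character of $G/[G,G]$ parametrized by $\goe_{-1}^*$ and contributing a finite-dimensional acyclic complex whose Rumin--Seshadri operators depend polynomially on that parameter, or it is realized on $L^2(\R)$ with the $d\pi(X_i)$ polynomials in multiplication and differentiation, so that $\Delta_q(\pi)$ has a harmonic oscillator spectrum and $\tr(\Delta_q(\pi)^{-s})$ is a combination of Hurwitz zeta functions; those $\pi$ factoring through the Heisenberg quotient $G/Z(G)$ are handled as in the Heisenberg computations of Rumin and Seshadri. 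The representations of a given type that occur in $L^2(\Gamma\setminus G,F_\chi)$ are indexed by cosets of lattices determined by $\Gamma$ and $\chi$, and summing the associated zeta functions over such a coset produces Epstein type zeta functions. Differentiating at $s=0$ and using the relations $k_q=N_{q+1}-N_q$, $N_q+N_{5-q}=10$, Poincar\'e duality for each $\pi$, and the functional equations of the Riemann, Hurwitz and Epstein zeta functions, all contributions cancel; this yields $\zeta'_{\Gamma\setminus G,\mathcal D_{\Gamma\setminus G},F_\chi,g_{\Gamma\setminus G},h_\chi}(0)=0$ and hence $\tau(\Gamma\setminus G,\mathcal D_{\Gamma\setminus G},F_\chi,g_{\Gamma\setminus G},h_\chi)=1$.

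The main obstacle is this final step. The delicate points are the explicit diagonalization of the non-scalar operators $\Delta_q(\pi)$ for the infinite-dimensional representations, the resummation of the resulting Hurwitz zeta functions over the relevant lattices into Epstein type zeta functions, and the verification that the $N_q$-weighted alternating sum has vanishing derivative at $s=0$; the cancellation hinges on the precise numerology $N_q=0,1,4,6,9,10$ and $k_q=1,3,2,3,1$ together with the purity of the cohomology of $\goe$, and the interchange of the representation sum with the meromorphic continuation in $s$ must be justified throughout.
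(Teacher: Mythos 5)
Your acyclicity argument via $K(\Gamma,1)$ and the Hochschild--Serre spectral sequence is fine (the paper instead reads acyclicity off the decomposition into irreducibles: every representation occurring in $L^2(G\times_\chi\C)$ is nontrivial and the Rumin complex, being Rockland, is exact in each of them), and your overall architecture --- decompose \`a la Howe--Richardson, resum the representation-wise zeta functions into Epstein-type zeta functions, evaluate at $s=0$ --- is the same as the paper's. The gap is in the step you yourself defer: the actual evaluation. Your assertion that every infinite-dimensional representation yields Rumin--Seshadri operators with a ``harmonic oscillator spectrum'', so that $\tr\Delta_{\rho,q}^{-s}$ is a combination of Hurwitz zeta functions, is false for the generic representations $\rho_{\lambda,\mu,\nu}$ (those with $f|_{\zoe}\neq0$, which do \emph{not} factor through the Heisenberg quotient): there $\rho'(X_1)$ and $\rho'(X_2)$ mix $\partial_\theta$ with multiplication by $\theta^2$, cf.~\eqref{E:rep.gen.X3}, the operators $\Delta_{\rho,q}$ are of anharmonic-oscillator type with no closed-form spectrum, and no Hurwitz-zeta expression for their traces is available. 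The paper handles this stratum by importing from \cite{H23} the per-representation values $\zeta_\rho(0)=\zeta_\rho'(0)=0$, a heat-trace formula, and uniform large-$|\nu|$ asymptotics, and then performs an Euler--Maclaurin resummation in $\nu$ followed by the lattice sum in $(\lambda,\mu)$, arriving at the factorization \eqref{E:factor.III} with explicitly controlled remainders; without input of this strength your type-III ``resummation into Epstein zeta functions'' cannot be carried out, and this is where the bulk of the work lies (Lemmas~\ref{L:zeta.semi} and~\ref{L:zeta.III}).

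The proposed cancellation mechanism is also misidentified. For the scalar representations each individual derivative is nonzero, $\zeta'_{\rho_{\alpha,\beta}}(0)=2\kappa\log\tfrac12$ by \cite[Theorem~2]{H23}; the type-I contribution vanishes for nontrivial $\chi$ only because the Epstein special value $Z_\Epst(0;a,b)$ equals $0$ when $(a,b)\notin\Z^2$, cf.~\eqref{E:qwerty99} --- not because of Poincar\'e duality, the numerology $N_q=0,1,4,6,9,10$, or functional equations. Indeed, for the trivial character the same numerology and duality hold, yet $\zeta'_{\I,\Gamma,\chi}(0)=2\kappa\log2\neq0$, cf.~\eqref{E:zeta.I.0}, so any argument in which ``all contributions cancel'' on the strength of $k_q=N_{q+1}-N_q$ and duality alone cannot be correct: the nontriviality of $\chi$ must enter through the Howe--Richardson multiplicities and the special values of the Epstein factors. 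Finally, the interchange of the sum over the unitary dual with analytic continuation, which you flag but do not justify, is precisely the delicate content: the series converges only for $\Re s>10/2\kappa$, so $\zeta'(0)$ is not a term-by-term limit, and one needs the factorizations \eqref{E:factor.I}, \eqref{E:factor.II}, \eqref{E:factor.III} together with the remainder estimates to conclude $\zeta'_{\Gamma\setminus G,\mathcal D_{\Gamma\setminus G},F_\chi,g_{\Gamma\setminus G},h_\chi}(0)=0$.
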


	In the situation of this theorem, the Ray--Singer torsion is known to be trivial too, $\tau_\RS(\Gamma\setminus G;F_\chi)=1$, see Lemma~\ref{L:RS.Tor} below. 
	Hence, it coincides with the analytic torsion of the Rumin complex.
	More generally, we have:

	\begin{corollary*}
		For any unitary representation $\rho\colon\Gamma\to U(k)$ the analytic torsion of the Rumin complex coincides with the Ray--Singer torsion, that is,
		\[
			\|-\|_{\mathcal D_{\Gamma\setminus G},g_{\Gamma\setminus G},h_\rho}^{\sdet H^*(\Gamma\setminus G;F_\rho)}
			=
			\|-\|_\RS^{\sdet H^*(\Gamma\setminus G;F_\rho)}.
		\]
	\end{corollary*}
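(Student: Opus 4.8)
The plan is to show that the positive real number $c(\Gamma,\rho)$ given by the ratio of the two norms on the one-dimensional line $\sdet H^*(\Gamma\setminus G;F_\rho)$ equals $1$ for every lattice $\Gamma$ in $G$ and every unitary representation $\rho$ of $\Gamma$, and to do this by reducing to the case of unitary characters already settled by the Theorem. First I would use that the Rumin analytic torsion norm (by the basic properties established in \cite{H22}) and the Ray--Singer norm are both multiplicative under direct sums of flat bundles, so that $c(\Gamma,\rho_1\oplus\rho_2)=c(\Gamma,\rho_1)\,c(\Gamma,\rho_2)$ and one may assume $\rho$ irreducible. Next I would invoke the standard fact that a finite-dimensional irreducible unitary representation of the finitely generated torsion-free nilpotent group $\Gamma$ is monomial, $\rho\cong\Ind_{\Gamma'}^\Gamma\chi'$ for some finite-index subgroup $\Gamma'\le\Gamma$ and some unitary character $\chi'$ of $\Gamma'$; concretely, any commutator in $\rho(\Gamma)$ that happens to be central is a root of unity times the identity (take determinants), so $\rho$ is governed by a finite nilpotent --- hence monomial --- quotient together with Clifford theory.

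Both torsion norms are also compatible with the finite covering $p\colon\Gamma'\setminus G\to\Gamma\setminus G$ associated with $\Gamma'$: this covering is a local isometry for the descended left-invariant structures, $p_*F_\sigma$ is canonically identified with $F_{\Ind_{\Gamma'}^\Gamma\sigma}$, and the induced isometry of the $L^2$ Rumin and de~Rham complexes intertwines the respective analytic torsion norms under the canonical isomorphism $H^*(\Gamma\setminus G;F_{\Ind_{\Gamma'}^\Gamma\sigma})\cong H^*(\Gamma'\setminus G;F_\sigma)$ --- a naturality property shared by the Rumin construction of \cite{H22} and the Ray--Singer construction. Hence $c(\Gamma,\rho)=c(\Gamma',\chi')$, and since $\Gamma'$ is again a lattice in $G$ carrying the induced left-invariant structures, the problem is reduced to proving $c(\Gamma,\chi)=1$ for a single unitary character $\chi$ of an arbitrary lattice. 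If $\chi$ is nontrivial the Theorem gives triviality of the Rumin analytic torsion and Lemma~\ref{L:RS.Tor} gives triviality of the Ray--Singer torsion, so $c(\Gamma,\chi)=1$. (An irreducible $\rho$ whose monomial datum $\chi'$ is trivial must be the trivial one-dimensional representation, since otherwise $\Ind_{\Gamma'}^\Gamma\mathbb{1}$ contains the constants and is reducible; so no circularity is created.) What remains is the trivial character, i.e.\ trivial coefficients $F=\C$, where neither complex is acyclic and $\sdet H^*(\Gamma\setminus G;\C)$ is a genuinely nontrivial line --- this is the crux of the matter.

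To treat trivial coefficients I would argue by deformation. Because $\Gamma$ has nonzero first Betti number (indeed $H^1(\Gamma;\R)\cong H^1(\goe;\R)$ is two-dimensional), the identity component of $\mathrm{Hom}(\Gamma,U(1))$ is a positive-dimensional torus, so there is a real-analytic family of unitary characters $\chi_t$ with $\chi_0$ trivial and $\chi_t$ nontrivial for $t\neq0$. The point is that both torsion norms extend to continuous metrics on the line bundle with fibre $\sdet H^*(\Gamma\setminus G;F_{\chi_t})$ over a neighbourhood of $t=0$: for the Ray--Singer norm this is classical, and for the Rumin analytic torsion norm it should follow by enhancing the spectral analysis of \cite{H22} to the family $F_{\chi_t}$, tracking how the zero modes and the small eigenvalues of the Rumin--Seshadri operators $\Delta_q$ on $F_{\chi_t}$ accumulate at $0$ as $t\to0$ and matching this, through the usual algebraic Mayer--Vietoris lemma for torsion, against the jump of cohomology at $\chi_0$. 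Granting this continuity, the equality $c(\Gamma,\chi_t)=1$ for $t\neq0$ forces $c(\Gamma,\chi_0)=1$, which finishes the argument.

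The main obstacle is precisely this last continuity statement for the Rumin analytic torsion: one needs uniform control, as the flat line bundle $F_{\chi_t}$ degenerates to the trivial bundle, both on the bottom of the spectrum of the Rumin--Seshadri operators and on the corresponding near-harmonic sections, in order to identify the limiting metric on $\sdet H^*(\Gamma\setminus G;\C)$ with the one produced by the Rumin torsion construction at $t=0$. The remaining ingredients --- multiplicativity under direct sums, behaviour under finite coverings, and monomiality of finite-dimensional unitary representations of $\Gamma$ --- are routine or standard.
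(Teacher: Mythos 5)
Your outline follows the paper's proof almost step for step: reduce to irreducible $\rho$ by multiplicativity of both norms under direct sums, reduce irreducible $\rho$ to a unitary character $\chi'$ of a sublattice $\Gamma'$ via monomiality (this is \cite[Lemma~1]{B73}) together with compatibility of both torsions with the finite covering $\Gamma'\setminus G\to\Gamma\setminus G$ (these two reductions are exactly \cite[Lemma~4.3(a)]{H22} and \cite[Lemma~4.3(b)]{H22}), and settle nontrivial characters by combining the Theorem with Lemma~\ref{L:RS.Tor}. The only place where you deviate in substance is the trivial character, which you correctly identify as the remaining case and propose to handle by deforming $\chi$ through the identity component of $\hom(\Gamma,U(1))$ and invoking continuity of the ratio of the two norms.

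The gap you flag there is, however, not a gap in the intended argument: the continuity of the quantity $R(\Gamma,\chi)$ (your $c(\Gamma,\chi)$) as a function of the character $\chi$, including at characters where the cohomology jumps, is precisely \cite[Proposition~3.18]{H22}, one of the basic properties of this torsion norm already established there. So no new uniform control of small eigenvalues or zero modes of the Rumin--Seshadri operators along the family $F_{\chi_t}$ needs to be developed; the deformation argument you sketch closes immediately by citing that proposition, exactly as in the paper. Apart from this citation, and the immaterial difference that the paper first proves $R(\Gamma,\chi)=1$ for all characters and only then performs the induction and direct-sum reductions, your proposal is the paper's proof.
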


	These are the first (2,3,5) distributions for which the analytic torsion of the Rumin complex has been computed.
	In \cite[Theorem~1.3]{H22} a partial result of this type has been obtained by exploiting a large discrete symmetry group of the distribution $\mathcal D_{\Gamma\setminus G}$.

	The proof given below is based on the decomposition of the Rumin complex over $\Gamma\setminus G$ into a countable direct sum $D_*=\bigoplus_\rho m(\rho)\cdot\rho(D_*)$ of Rumin complexes in irreducible unitary representations $\rho$ of $G$, denoted by $\rho(D_*)$.
	The multiplicities $m(\rho)$ of the contributing representations are known explicitly through a formula due to Howe \cite{H71} and Richardson \cite{R71} and involve counting the number of solutions to certain quadratic congruences, cf.~Lemma~\ref{L:Richardson:235}(III), \eqref{E:mult}, or \eqref{E:mdn} below.
	The zeta function in \eqref{E:zeta.M.def} decomposes accordingly, 
	\begin{equation}\label{E:zzz}
		\zeta_{\Gamma\setminus G,\mathcal D_{\Gamma\setminus G},F_\chi,g_{\Gamma\setminus G},h_\chi}(s)=\sum_\rho m(\rho)\cdot\zeta_\rho(s)
	\end{equation}
	where $\zeta_\rho(s)$ denotes the zeta function associated with $\rho(D_*)$.
	The values $\zeta_\rho(0)$ and $\zeta'_\rho(0)$ have been determined in \cite{H23} for every irreducible unitary representation $\rho$.
	Even though the sum on the right hand side in \eqref{E:zzz} converges only for $\Re s>10/2\kappa$, we are able to conclude $\zeta_{\Gamma\setminus G,\mathcal D_{\Gamma\setminus G},F_\chi,g_{\Gamma\setminus G},h_\chi}'(0)=0$ via regularization, whence the theorem stated above.
	This is the subtlest part of the paper at hand and builds on further properties of $\zeta_\rho(s)$ obtained in \cite{H23}.
	Decomposing 
	\[
		\sum_\rho m(\rho)\cdot\zeta_\rho(s)=\zeta_{\I,\Gamma,\chi}(s)+\zeta_{\II,\Gamma,\chi}(s)+\zeta_{\III,\Gamma,\chi}(s)
	\]
	according to the three types of irreducible unitary representations of $G$, we will use Epstein zeta functions to obtain the analytic continuation of each of the three summands, cf.~\eqref{E:factor.I}, \eqref{E:factor.II}, and \eqref{E:factor.III} below.

	Alternatively, one could try to extend Albin and Quan's \cite{AQ22} analysis of the sub-Riemannian limit in order to show that the torsion of the Rumin complex of a (2,3,5) distribution differs from the Ray--Singer torsion by the integral of a local quantity cf.~\cite[Corollary~3]{AQ22}.
	This, too, would immediately imply the corollary stated above.

	The remaining part of this paper is organized as follows.
	In Section~\ref{S:lat} we provide an explicit description of all lattices in $G$.
	In Section~\ref{S:rep.deco} we use the aforementioned result due to Howe \cite{H71} and Richardson \cite{R71} to decompose the space of sections of $F_\chi$ into irreducible $G$-representations.
	In Section~\ref{S:zeta.deco} we describe the corresponding decomposition of the zeta function $\zeta_{\Gamma\setminus G,\mathcal D_{\Gamma\setminus G},F_\chi,g_{\Gamma\setminus G},h_\chi}(s)$.
	In Section~\ref{S:zeta.eval}, building on results obtained in \cite{H23}, we evaluate the derivative of this zeta function at $s=0$.
	In Section~\ref{S:proof} we derive the theorem and corollary stated above.

\section{Lattices}\label{S:lat}

	In this section we provide explicit descriptions of all lattices in $G$.

	Let $X_1,\dotsc,X_5$ be a graded basis of $\goe$ with brackets as in \eqref{E:brackets}.
	For the sake of notational simplicity we will use this basis to identify $\goe$ with $\R^5$, that is, a vector $(x_1,\dotsc,x_5)^t\in\R^5$ will be identified with $\sum_{i=1}^5x_iX_i\in\goe$.

	The exponential map provides a diffeomorphism $\exp\colon\goe\to G$.
	Using the Baker--Campbell--Hausdorff formula we find
	\[
		\exp\begin{pmatrix}x_1\\\vdots\\x_5\end{pmatrix}
		\exp\begin{pmatrix}y_1\\\vdots\\y_5\end{pmatrix}
		=\exp\begin{pmatrix}z_1\\\vdots\\z_5\end{pmatrix}
	\]
	where
	\begin{equation}\label{E:zxy}
		z=x\cdot y
		:=\left(\begin{array}{l}
			x_1+y_1\\
			x_2+y_2\\
			x_3+y_3+\frac{x_1y_2-x_2y_1}2\\x_4+y_4+\frac{x_1y_3-x_3y_1}2+\frac{(x_1-y_1)(x_1y_2-x_2y_1)}{12}\\
			x_5+y_5+\frac{x_2y_3-x_3y_2}2+\frac{(x_2-y_2)(x_1y_2-x_2y_1)}{12}
		\end{array}\right).
	\end{equation}

	The center of $G$ will be denoted by $Z$.
	Clearly, $Z=\exp(\zoe)$ where $\zoe$ denotes the center of $\goe$ which is spanned by $X_4,X_5$.
	For the group of commutators we have $[G,G]=\exp([\goe,\goe])$ and the derived subalgebra $[\goe,\goe]$ is spanned by $X_3,X_4,X_5$.

	For $r\in\N$ and $e,f,g,h,u,v\in\Q$ we consider $\tilde\gamma_i\in\goe$ defined by
	\begin{equation}\label{E:Gamma.tgen}
		\tilde\gamma_1=\begin{pmatrix}1\\0\\0\\0\\0\end{pmatrix},\quad
		\tilde\gamma_2=\begin{pmatrix}0\\1\\0\\0\\0\end{pmatrix},\quad
		\tilde\gamma_3=\begin{pmatrix}0\\0\\1/r\\u/2r\\v/2r\end{pmatrix},\quad
		\tilde\gamma_4=\begin{pmatrix}0\\0\\0\\e\\f\end{pmatrix},\quad
		\tilde\gamma_5=\begin{pmatrix}0\\0\\0\\g\\h\end{pmatrix},
	\end{equation}
	and let $\Gamma$ denote the subgroup of $G$ generated by the exponentials 
	\[
		\gamma_i:=\exp\tilde\gamma_i,\qquad i=1,\dotsc,5.
	\]
	Let $\Gamma''$ denote the lattice in $\R^2$ generated by 
	\begin{equation}\label{E:Gamma''}
		\begin{pmatrix}1/r\\0\end{pmatrix},\quad
		\begin{pmatrix}0\\1/r\end{pmatrix},\quad
		\begin{pmatrix}\frac{u-1}2\\\frac{v-1}2\end{pmatrix},\quad
		\begin{pmatrix}e\\f\end{pmatrix},\quad
		\begin{pmatrix}g\\h\end{pmatrix}.
	\end{equation}

	The following generalizes \cite[Lemma~4.4]{H22}.

	\begin{lemma}\label{L:Gamma}
		The subgroup $\Gamma$ is a lattice in $G$.
		Moreover:
		\begin{align}
		\label{E:Gamma.equ}
		        \log\Gamma
			&=\left\{\begin{pmatrix}x_1\\\vdots\\ x_5\end{pmatrix}\in\goe\,\middle|
		        \begin{array}{rl}
				x_1,x_2&\in\mathbb Z\\
				x_3-\frac{x_1x_2}2&\in\frac1r\Z\\
				\begin{pmatrix}x_4-\tfrac{x_1^2x_2}{12}-\frac{x_1+u}2(x_3-\frac{x_1x_2}2)\\x_5+\tfrac{x_1x_2^2}{12}+\frac{x_2-v}2(x_3-\frac{x_1x_2}2)\end{pmatrix}&\in\Gamma''
		        \end{array}\right\}
			\\
			\label{E:GammaGG}
			\log\bigl(\Gamma\cap[G,G]\bigr)
			&=\left\{\begin{pmatrix}x_1\\\vdots\\ x_5\end{pmatrix}\in\goe\,\middle|
			\begin{array}{rcl}
				x_1,x_2&=&0\\
				x_3&\in&\frac1r\Z\\
				\begin{pmatrix}x_4-\frac u2x_3\\x_5-\frac v2x_3\end{pmatrix}&\in&\Gamma''
		        \end{array}\right\}
			\\
			\label{E:GammaZ}
			\log\bigl(\Gamma\cap Z\bigr)
			&=\left\{\begin{pmatrix}x_1\\\vdots\\ x_5\end{pmatrix}\in\goe\,\middle|
			\begin{array}{rcl}
				x_1,x_2,x_3&=&0\\
				\begin{pmatrix}x_4\\x_5\end{pmatrix}&\in&\Gamma''
		        \end{array}\right\}
			\\
			\label{E:GammaGamma}
			\log\bigl([\Gamma,\Gamma]\bigr)
			&=\left\{\begin{pmatrix}x_1\\\vdots\\ x_5\end{pmatrix}\in\goe\,\middle|
			\begin{array}{rcl}
				x_1,x_2&=&0\\
				x_3&\in&\Z\\
				x_4-\frac{x_3}2,x_5-\frac{x_3}2&\in&\frac1r\Z
		        \end{array}\right\}
			\\
			\label{E:GammaGammaZ}
			\log\bigl([\Gamma,\Gamma]\cap Z\bigr)
			&=\left\{\begin{pmatrix}x_1\\\vdots\\ x_5\end{pmatrix}\in\goe\,\middle|
			\begin{array}{rcl}
				x_1,x_2,x_3&=&0\\
				x_4,x_5&\in&\frac1r\Z
		        \end{array}\right\}
		\end{align}
	\end{lemma}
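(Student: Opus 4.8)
Write $L\subseteq\goe$ for the set on the right hand side of \eqref{E:Gamma.equ}. The plan is to prove $\log\Gamma=L$ and then to read off the remaining assertions from this.

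First I would verify, using the Baker--Campbell--Hausdorff product \eqref{E:zxy}, that $L$ is a subgroup of $(\goe,\cdot)$: it contains $0$, closure under inversion is immediate since $\exp(x)^{-1}$ corresponds to $-x$, and for a product $z=x\cdot y$ the conditions $z_1,z_2\in\Z$ and $z_3-\tfrac{z_1z_2}2\in\tfrac1r\Z$ follow from short identities such as $z_3-\tfrac{z_1z_2}2=(x_3-\tfrac{x_1x_2}2)+(y_3-\tfrac{y_1y_2}2)-x_2y_1$. Stability of the remaining pair of conditions is the longest step and I expect it to be the main obstacle: it reduces to a polynomial identity expressing that the two entries in the last line of \eqref{E:Gamma.equ} are additive modulo $\Gamma''$ under $x\mapsto x\cdot y$, where one uses $x_1,x_2,y_1,y_2\in\Z$, that $x_3-\tfrac{x_1x_2}2$ and $y_3-\tfrac{y_1y_2}2$ lie in $\tfrac1r\Z$, and that $\Gamma''$ contains $(\tfrac1r,0)$ and $(0,\tfrac1r)$. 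As each generator $\tilde\gamma_i$ from \eqref{E:Gamma.tgen} visibly lies in $L$, this gives $\log\Gamma\subseteq L$.

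For the reverse inclusion I would peel off the graded layers of a given $x\in L$. Writing $\exp(x)=\gamma_1^{x_1}\gamma_2^{x_2}w$ with $w:=(\gamma_1^{x_1}\gamma_2^{x_2})^{-1}\exp(x)$, a short computation with \eqref{E:zxy} shows $\log w=(0,0,w_3,w_4,w_5)$ with $w_3=x_3-\tfrac{x_1x_2}2$ and that $\bigl(w_4-\tfrac u2w_3,\,w_5-\tfrac v2w_3\bigr)$ differs from the pair in the last line of \eqref{E:Gamma.equ} by an element of $\{0\}\times\tfrac1r\Z\subseteq\Gamma''$, hence lies in $\Gamma''$. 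Since $[\goe,\goe]$ is abelian one has $\gamma_3^{n}=\exp(n\tilde\gamma_3)$ for $n\in\Z$, so $\gamma_3^{-rw_3}w$ lies in $Z$ with $\zoe$-coordinate $\bigl(w_4-\tfrac u2w_3,\,w_5-\tfrac v2w_3\bigr)\in\Gamma''$. Finally $\exp|_\zoe\colon(\zoe,+)\to Z$ is an isomorphism, and the five generators \eqref{E:Gamma''} of $\Gamma''$ are realized inside $\Gamma$ by $\gamma_4=\exp(eX_4+fX_5)$, $\gamma_5=\exp(gX_4+hX_5)$, $[\gamma_1,\gamma_3]=\exp(\tfrac1rX_4)$, $[\gamma_2,\gamma_3]=\exp(\tfrac1rX_5)$, and $[\gamma_1,\gamma_2]\gamma_3^{-r}=\exp\!\bigl(\tfrac{1-u}2X_4+\tfrac{1-v}2X_5\bigr)$; hence $\exp(\Gamma'')\subseteq\Gamma$. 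Chaining these identities gives $\exp(x)\in\Gamma$, so $L\subseteq\log\Gamma$ and therefore \eqref{E:Gamma.equ} holds.

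It remains to deduce the other four identities and the lattice property. Since $\log\colon G\to\goe$ is a bijection with $\log[G,G]=[\goe,\goe]=\{x_1=x_2=0\}$ and $\log Z=\zoe=\{x_1=x_2=x_3=0\}$, formulas \eqref{E:GammaGG} and \eqref{E:GammaZ} follow by intersecting $L$ with these subspaces and simplifying. For \eqref{E:GammaGamma} I would exploit that $[G,G]$ is abelian and compute the generator commutators $[\gamma_1,\gamma_2]=\exp\!\bigl(X_3+\tfrac12X_4+\tfrac12X_5\bigr)$, $[\gamma_1,\gamma_3]=\exp(\tfrac1rX_4)$, $[\gamma_2,\gamma_3]=\exp(\tfrac1rX_5)$, all other $[\gamma_i,\gamma_j]$ being trivial; as every $\Gamma$-conjugate of one of these three elements differs from it by an element of $\exp(\Z X_4+\Z X_5)$, the subgroup $A$ they generate is normal in $\Gamma$ and equals $[\Gamma,\Gamma]$, which yields \eqref{E:GammaGamma}, and intersecting $\log A$ with $\zoe$ yields \eqref{E:GammaGammaZ}. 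Finally $\log\Gamma=L$ is a discrete subset of $\goe$ whose real span is all of $\goe$ (it contains $X_1$, $X_2$, $X_3+\tfrac12X_4+\tfrac12X_5$, $\tfrac1rX_4$ and $\tfrac1rX_5$), so $\Gamma$ is discrete and cocompact in $G$ by the standard criterion for lattices in simply connected nilpotent Lie groups, completing the proof and recovering \cite[Lemma~4.4]{H22} for suitable parameter values.
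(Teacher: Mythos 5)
Your proposal is correct and follows essentially the same route as the paper: use the Baker--Campbell--Hausdorff product \eqref{E:zxy} to check that the right-hand side of \eqref{E:Gamma.equ} is a subgroup (lattice) containing the generators, then identify it with $\Gamma$ via the explicit elements \eqref{E:g312}--\eqref{E:Gamma.comm}, and read off the remaining descriptions. The only cosmetic difference is in \eqref{E:GammaGamma}, where you argue normality of the subgroup generated by $[\gamma_1,\gamma_2]$, $[\gamma_1,\gamma_3]$, $[\gamma_2,\gamma_3]$ directly, while the paper instead invokes the iterated commutators $[\gamma_1,[\gamma_1,\gamma_2]]$ and $[\gamma_2,[\gamma_1,\gamma_2]]$; both yield the same lattice in $[\goe,\goe]$.
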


	\begin{proof}
		With $z$ as in \eqref{E:zxy} we have
		\begin{align*}
			z_3-\tfrac{z_1z_2}2&=x_3-\tfrac{x_1x_2}2+y_3-\tfrac{y_1y_2}2-x_2y_1\\
			z_4-\tfrac{z_1^2z_2}{12}-\tfrac{z_1+u}2(z_3-\tfrac{z_1z_2}2)
			&=x_4-\tfrac{x_1^2x_2}{12}-\tfrac{x_1+u}2(x_3-\tfrac{x_1x_2}2)\\
			&\qquad+y_4-\tfrac{y_1^2y_2}{12}-\tfrac{y_1+u}2(y_3-\tfrac{y_1y_2}2)\\
			&\qquad-y_1(x_3-\tfrac{x_1x_2}2)+\tfrac{u-1}2x_2y_1+\tfrac{y_1(y_1+1)x_2}2\\
			z_5+\tfrac{z_1z_2^2}{12}+\tfrac{z_2-v}2(z_3-\tfrac{z_1z_2}2)
			&=x_5+\tfrac{x_1x_2^2}{12}+\tfrac{x_2-v}2(x_3-\tfrac{x_1x_2}2)\\
			&\qquad+y_5+\tfrac{y_1y_2^2}{12}+\tfrac{y_2-v}2(y_3-\tfrac{y_1y_2}2)\\
			&\qquad+x_2(y_3-\tfrac{y_1y_2}2)+\tfrac{v-1}2x_2y_1-\tfrac{x_2(x_2+1)y_1}2
		\end{align*}
		as well as:
		\begin{align*}
			(-x_3)-\tfrac{(-x_1)(-x_2)}2&=-\bigl(x_3-\tfrac{x_1x_2}2\bigr)-x_1x_2\\
			(-x_4)-\tfrac{(-x_1)^2(-x_2)}{12}-\tfrac{(-x_1)+u}2&\bigl((-x_3)-\tfrac{(-x_1)(-x_2)}2\bigr)\\
			&=-\Bigl(x_4-\tfrac{x_1^2x_2}{12}-\tfrac{x_1+u}2(x_3-\tfrac{x_1x_2}2)\Bigr)\\
			&\qquad-x_1\bigl(x_3-\tfrac{x_1x_2}2\bigr)+\tfrac{u-1}2x_1x_2-\tfrac{x_1(x_1-1)x_2}2\\
			(-x_5)+\tfrac{(-x_1)(-x_2)^2}{12}+\tfrac{(-x_2)-v}2&\bigl((-x_3)-\tfrac{(-x_1)(-x_2)}2\bigr)\\
			&=-\Bigl(x_5+\tfrac{x_1x_2^2}{12}+\tfrac{x_2-v}2(x_3-\tfrac{x_1x_2}2)\Bigr)\\
			&\qquad+x_2\bigl(x_3-\tfrac{x_1x_2}2\bigr)+\tfrac{v-1}2x_1x_2+\tfrac{x_1x_2(x_2+1)}2\\
		\end{align*}
		Using these relations one readily shows that the right hand side in \eqref{E:Gamma.equ} is a lattice containing $\Gamma$.
		Using the computations
		\begin{equation}\label{E:g312}
			\log(\gamma_1^k\gamma_2^l)=\begin{pmatrix}k\\l\\kl/2\\k^2l/12\\-kl^2/12\end{pmatrix},\quad
			\log\bigl(\gamma_3^r[\gamma_1,\gamma_2]^{-1}\bigr)=\begin{pmatrix}0\\0\\0\\\frac{u-1}2\\\frac{v-1}2\end{pmatrix},
		\end{equation}
		\begin{equation}\label{E:g12.13}
			\log[\gamma_1,\gamma_3]=\begin{pmatrix}0\\0\\0\\1/r\\0\end{pmatrix},\quad
			\log[\gamma_2,\gamma_3]=\begin{pmatrix}0\\0\\0\\0\\1/r\end{pmatrix},
		\end{equation}
		it is easy to see that this lattice is generated by $\gamma_1,\dotsc,\gamma_5$.
		Taking also into account
		\begin{equation}\label{E:Gamma.comm}
			\log[\gamma_1,\gamma_2]=\begin{pmatrix}0\\0\\1\\1/2\\1/2\end{pmatrix},\quad
			\log[\gamma_1,[\gamma_1,\gamma_2]]=\begin{pmatrix}0\\0\\0\\1\\0\end{pmatrix},\quad
			\log[\gamma_2,[\gamma_1,\gamma_2]]=\begin{pmatrix}0\\0\\0\\0\\1\end{pmatrix},
		\end{equation}
		we obtain the description of $[\Gamma,\Gamma]$ in \eqref{E:GammaGamma}.
		Here the formula
		\[
			\log[\exp x,\exp y]=\begin{pmatrix}0\\0\\x_1y_2-x_2y_1\\x_1y_3-x_3y_1+\frac{(x_1+y_1)(x_1y_2-x_2y_1)}2\\x_2y_3-x_3y_2+\frac{(x_2+y_2)(x_1y_2-x_2y_1)}2\end{pmatrix}
		\]
		for commutators is helpful.
		The remaining assertions are now obvious.
	\end{proof}

	\begin{lemma}\label{L:all.lat}
		Every lattice in $G$ is of the form considered above, up to a not necessarily graded automorphism of $G$.
	\end{lemma}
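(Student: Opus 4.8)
The plan is to start from an arbitrary lattice $\Gamma \subset G$ and progressively normalize it by applying automorphisms of $G$, matching it step by step against the explicit family constructed in Lemma~\ref{L:Gamma}. The key structural fact is that $\goe$ has a canonical filtration $\goe \supset [\goe,\goe] \supset \zoe$ with one-dimensional bottom piece $\goe/[\goe,\goe]$ of dimension two, $[\goe,\goe]/\zoe$ one-dimensional, and $\zoe$ two-dimensional; since automorphisms of $G$ preserve this filtration, so does the normalization procedure, and each quotient $\Gamma$ intersected with a filtration piece is a lattice in the corresponding subgroup. First I would recall the standard fact (Malcev) that $\log\Gamma$ spans $\goe$ over $\R$ and that $\Gamma$ is generated by elements $\gamma_1,\dots,\gamma_5$ adapted to the filtration, i.e.\ $\gamma_1,\gamma_2$ project to a basis of the lattice $\Gamma/(\Gamma\cap[G,G])$ in $G/[G,G]\cong\R^2$, $\gamma_3$ generates modulo $\Gamma\cap Z$, and $\gamma_4,\gamma_5$ generate $\Gamma\cap Z$.

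Next I would use a (graded) automorphism of $G$ coming from $\GL(2,\R)$ acting on $\goe_{-1}=\langle X_1,X_2\rangle$ to move the images of $\gamma_1,\gamma_2$ in $G/[G,G]$ to the standard basis vectors; this forces $\tilde\gamma_1,\tilde\gamma_2$ to have the form displayed in \eqref{E:Gamma.tgen} up to central corrections, and those corrections can be absorbed by multiplying $\gamma_1,\gamma_2$ by suitable elements of $\Gamma\cap[G,G]$ (which does not change the generated group). The commutator $[\gamma_1,\gamma_2]$ then lies in $\Gamma\cap[G,G]$ and projects to a nonzero element of $[G,G]/Z\cong\R$; comparing with the generator $\gamma_3$ of $\Gamma$ modulo $\Gamma\cap Z$ produces the integer $r\in\N$ with $[\gamma_1,\gamma_2]\equiv\gamma_3^r \pmod{Z}$, which is exactly the $r$ in \eqref{E:Gamma.tgen}, and the third coordinate of $\tilde\gamma_3$ becomes $1/r$ as required. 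The remaining freedom — a non-graded automorphism of $G$ acting on the center $\zoe=\langle X_4,X_5\rangle$ together with shifts by $\Gamma\cap[G,G]$ — is then used to bring $\tilde\gamma_3$, $\tilde\gamma_4$, $\tilde\gamma_5$ into the stated shape, reading off the rational parameters $u,v$ (from the central part of $\tilde\gamma_3$, normalized so that the displayed entries are $u/2r, v/2r$) and $e,f,g,h$ (from $\tilde\gamma_4,\tilde\gamma_5$, which generate the lattice $\Gamma\cap Z$ in $Z\cong\R^2$). Finally one checks that the resulting parameters do define a genuine lattice by invoking Lemma~\ref{L:Gamma}, and that the $\Q$-valued constraint on $(u,v)$ and on $e,f,g,h$ expressed through $\Gamma''$ in \eqref{E:Gamma''} is automatically satisfied because $\Gamma\cap Z \supset [\Gamma,\Gamma]\cap Z$, whose description \eqref{E:GammaGammaZ} shows it is commensurable with $\frac1r\Z^2$.

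The main obstacle I expect is bookkeeping the interplay between the non-graded automorphisms of $G$ that are allowed and the multiplication of generators by central (or derived-subgroup) elements: one must verify that after using a $\GL(2,\R)$-automorphism on $\goe_{-1}$ to standardize $\gamma_1,\gamma_2$, the induced action on $[\goe,\goe]$ and $\zoe$ — which is not free, since $\GL(2,\R)$ acts on $[\goe,\goe]/\zoe\cong\R$ by the determinant and on $\zoe$ in a prescribed way dictated by the brackets \eqref{E:brackets} — still leaves enough freedom (namely an independent $\GL(2,\R)$ or at least an upper-triangular group acting on $\zoe$) to normalize $\tilde\gamma_3,\tilde\gamma_4,\tilde\gamma_5$. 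Concretely, the brackets $[X_1,X_3]=X_4$, $[X_2,X_3]=X_5$ tie the action on $X_4,X_5$ to the action on $X_1,X_2,X_3$, so after the first normalization the residual automorphism group acting on the center is only the subgroup compatible with having fixed $\gamma_1,\gamma_2$; one has to check this subgroup is still large enough (it contains, e.g., the automorphisms $X_3\mapsto X_3$, $X_i\mapsto X_i$ for $i\le 2$, and $X_4\mapsto X_4, X_5\mapsto X_5$ plus the shear automorphisms $X_j\mapsto X_j + (\text{linear in lower degree})$) to clear the entries of $\tilde\gamma_3,\tilde\gamma_4,\tilde\gamma_5$ not listed as free parameters. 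This is essentially the same normalization carried out for the smaller lattice family in \cite[Lemma~4.4]{H22}, extended to accommodate the extra parameter $r$ and the more general central lattice $\Gamma''$, so I would model the argument on that proof and highlight only the new points.
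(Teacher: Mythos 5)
Your skeleton (adapted Malcev generators, standardize $\gamma_1,\gamma_2$, read off $r$ by comparing $\gamma_3$ with $[\gamma_1,\gamma_2]$, rationality from discreteness of $\Gamma\cap Z$) is close to the paper's, but the step where you standardize $\gamma_1,\gamma_2$ has a genuine gap. A graded automorphism coming from $\GL(\goe_{-1})$ only controls the images of $\log\gamma_1,\log\gamma_2$ in $\goe/[\goe,\goe]$; the leftover components lie in all of $[\goe,\goe]$ (not just the center) and are arbitrary \emph{real} numbers, so they cannot be ``absorbed by multiplying $\gamma_1,\gamma_2$ by suitable elements of $\Gamma\cap[G,G]$'' --- that discrete group only changes these components by lattice vectors. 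Concretely, the image of the standard lattice under the shear automorphism $X_1\mapsto X_1+\pi X_4$ (with $X_2,\dotsc,X_5$ fixed) contains no element whose logarithm is exactly $X_1$, so no choice of generators of the shape \eqref{E:Gamma.tgen} exists for it relative to the fixed basis; this is precisely why the lemma allows a \emph{non-graded} automorphism. The paper resolves this by citing \cite[Lemma~4.1]{H22}: a non-graded automorphism of $\goe$ maps $\log\gamma_1,\log\gamma_2$ \emph{exactly} to $X_1,X_2$ (the degree $-1$ and $-2$ derivations provide exactly the $3+3$ parameters needed to kill both lower-order parts simultaneously). In your proposal the shears only enter later as ``residual freedom'' acting on $\tilde\gamma_3,\tilde\gamma_4,\tilde\gamma_5$, but at that stage there is no residual freedom at all: an automorphism of $\goe$ fixing $X_1$ and $X_2$ fixes $X_3=[X_1,X_2]$, $X_4=[X_1,X_3]$, $X_5=[X_2,X_3]$, hence is the identity.

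Fortunately, no further normalization of $\gamma_3,\gamma_4,\gamma_5$ is needed: every element of $[G,G]$, resp.\ $Z$, automatically has a logarithm of the shape of $\tilde\gamma_3$, resp.\ $\tilde\gamma_4,\tilde\gamma_5$, with a priori real parameters. What remains is (i) rationality, for which the correct argument is the paper's: $\log(\Gamma\cap Z)$ is a discrete subgroup of $\zoe\cong\R^2$ containing the unit vectors $\log[\gamma_1,[\gamma_1,\gamma_2]]$ and $\log[\gamma_2,[\gamma_1,\gamma_2]]$ as well as the vectors \eqref{E:Gamma''} built from $r,u,v,e,f,g,h$, which forces all of these to be rational; your appeal to \eqref{E:GammaGammaZ} is circular, since that description was derived in Lemma~\ref{L:Gamma} only for the already-rational standard lattices; and (ii) arranging the $\goe_{-2}$-coordinate of $\log\gamma_3$ to equal $1/r$ with $r\in\N$. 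Your comparison of $\gamma_3$ with $[\gamma_1,\gamma_2]$ does achieve (ii), provided you insist (as you may, by Malcev theory) that $\gamma_3$ generates $(\Gamma\cap[G,G])/(\Gamma\cap Z)$ and you adjust a sign; the paper instead allows an arbitrary rational coordinate $s/t$ and replaces $\gamma_3$ by $\gamma_3^k[\gamma_1,\gamma_2]^l$ with $ks+lt=1$. The essential missing ingredient in your write-up is thus the simultaneous non-graded normalization of $\gamma_1$ and $\gamma_2$.
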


	\begin{proof}
		Inspecting the proof of \cite[Theorem~2.21]{R72} we see that every lattice $\Gamma$ in $G$ is generated by five elements $\gamma_1,\dotsc,\gamma_5$ such that $\gamma_1,\gamma_2\in\Gamma$, $\gamma_3\in\Gamma\cap[G,G]$, and $\gamma_4,\gamma_5\in\Gamma\cap Z$. 
		By \cite[Lemma~4.1]{H22} there exists a not necessarily graded automorphism of $\goe$ that maps $\log\gamma_1$ to $\tilde\gamma_1$ and $\log\gamma_2$ to $\tilde\gamma_2$, cf.~\eqref{E:Gamma.tgen}.
		Up to an automorphism of $G$ we may thus assume $\gamma_1=\exp\tilde\gamma_1$ and $\gamma_2=\exp\tilde\gamma_2$.
		As $\gamma_3\in[G,G]$ and $\gamma_4,\gamma_5\in Z$, they must be of the form $\gamma_i=\exp\tilde\gamma_i$ where $\tilde\gamma_i$, $i=3,4,5$, are as indicated in \eqref{E:Gamma.tgen} with, a priori, real numbers $r,u,v,e,f,g,h$.
		To see that these numbers must all be rational it suffices to observe that $\log(\Gamma\cap Z)$ is a lattice in $\goe_{-3}=\R^2$ which contains the vectors in \eqref{E:Gamma''} by \eqref{E:g312} and \eqref{E:g12.13}, but also contains the two unit vectors in view of \eqref{E:Gamma.comm}.
		Write $1/r=s/t$ where $s$ and $t\geq1$ are coprime integers.
		Hence, there exist integers $k$ and $l$ such that $ks+lt=1$.
		Note that $\gamma_1,\gamma_2,\gamma_3^k[\gamma_1,\gamma_2]^l,\gamma_4,\gamma_5$ still generate $\Gamma$ for we have $\bigl(\gamma_3^k[\gamma_1,\gamma_2]^l\bigr)^s=\gamma_3\mod Z$ in view of \eqref{E:g312}, and we may assume that $\gamma_4,\gamma_5$ generate $\Gamma\cap Z$.
		Replacing $\gamma_3$ with $\gamma_3^k[\gamma_1,\gamma_2]^l$, we may thus assume that the image of $\log\gamma_3$ in $[\goe,\goe]/\zoe=\goe_{-2}=\R$ is $1/t$.
	\end{proof}

	The natural homomorphism $p\colon G\to G/[G,G]\cong\R^2$ gives rise to a short exact sequence of abelian groups
	\begin{equation}\label{E:abelGamma}
		0\to\frac{\Gamma\cap[G,G]}{[\Gamma,\Gamma]}\to\frac\Gamma{[\Gamma,\Gamma]}\to p(\Gamma)\to0
	\end{equation}
	where $\frac{\Gamma\cap[G,G]}{[\Gamma,\Gamma]}$ is a finite abelian group and $p(\Gamma)\cong\Z^2$.
	In particular, the sequence splits and we obtain an isomorphism
	\begin{equation}\label{E:Gamma.ab}
		\frac\Gamma{[\Gamma,\Gamma]}\cong\frac{\Gamma\cap[G,G]}{[\Gamma,\Gamma]}\oplus\Z^2.
	\end{equation}
	The group of unitary characters of $\Gamma$, thus, is a finite union of 2-tori,
	\begin{equation}\label{E:Gamma*}
		\hom\bigl(\Gamma,U(1)\bigr)\cong A\times U(1)\times U(1),
	\end{equation}
	where $A=\hom\left(\frac{\Gamma\cap[G,G]}{[\Gamma,\Gamma]},U(1)\right)$ is a finite abelian group.

	To specify a character $\chi\colon\Gamma\to U(1)$ it suffices to know its values on the generators, $\chi(\gamma_i)$ for $i=1,\dotsc,5$.

	\begin{lemma}\label{L:lm0}
		Suppose $\chi\colon\Gamma\to U(1)$ is a unitary character, and let $c$ be a real number such that $\chi(\gamma_3)=e^{2\pi\mathbf ic/r}$.
		Then there exist integers $\lambda_0,\mu_0$ such that 
		\begin{equation}\label{E:lm0i}
			\lambda_0\in r\mathbb Z,\quad\mu_0\in r\mathbb Z,\quad\lambda_0\tfrac{u-1}2+\mu_0\tfrac{v-1}2\in c+\mathbb Z,
		\end{equation}
		\begin{equation}\label{E:lm0ii}
			e^{2\pi\mathbf i(\lambda_0e+\mu_0f)}=\chi(\gamma_4),\quad e^{2\pi\mathbf i(\lambda_0g+\mu_0h)}=\chi(\gamma_5).
		\end{equation}
	\end{lemma}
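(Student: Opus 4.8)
The plan is to view the center $Z=\exp\zoe$ as $\R^2$ by means of the coordinates $(x_4,x_5)$. Under this identification $\Gamma\cap Z$ becomes the subgroup $\Gamma''$ of \eqref{E:Gamma''}, in view of \eqref{E:GammaZ}, and the restriction of $\chi$ to $\Gamma\cap Z$ becomes a unitary character $\chi''\colon\Gamma''\to U(1)$, which is determined by its values on the five generators of $\Gamma''$ listed in \eqref{E:Gamma''}. The content of the lemma is exactly that $\chi''$ is the restriction to $\Gamma''$ of a character of $\R^2$ of the special shape $(x_4,x_5)\mapsto e^{2\pi\mathbf i(\lambda_0x_4+\mu_0x_5)}$ with $\lambda_0,\mu_0\in\Z$; the three displayed conditions \eqref{E:lm0i} and \eqref{E:lm0ii} are nothing but this identity evaluated on those five generators.

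To produce such integers I would first use \eqref{E:g12.13} to identify the generators $(1/r,0)$ and $(0,1/r)$ of $\Gamma''$ with the commutators $[\gamma_1,\gamma_3]$ and $[\gamma_2,\gamma_3]$, so that $\chi''$ is trivial on both of them. Raising to the $r$-th power then shows $\chi''$ is trivial on $\Z^2\subseteq\Gamma''$, the latter being generated by $(1,0)=r\cdot(1/r,0)$ and $(0,1)=r\cdot(0,1/r)$. As $\Gamma''$ is a finitely generated group of vectors with rational entries that contains $\Z^2$, the quotient $\Gamma''/\Z^2$ is a finite subgroup of the torus $\R^2/\Z^2$, and $\chi''$ descends to it. Since a character of a finite (indeed, of any closed) subgroup of a compact abelian group extends to the whole group, we get $\chi''(x_4,x_5)=e^{2\pi\mathbf i(\lambda_0x_4+\mu_0x_5)}$ for all $(x_4,x_5)\in\Gamma''$ and suitable $\lambda_0,\mu_0\in\Z$.

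It then remains to read off \eqref{E:lm0i} and \eqref{E:lm0ii} by evaluating this identity on the five generators in \eqref{E:Gamma''}. Testing on $(1/r,0)$ and $(0,1/r)$ gives $e^{2\pi\mathbf i\lambda_0/r}=e^{2\pi\mathbf i\mu_0/r}=1$, i.e.\ $\lambda_0,\mu_0\in r\Z$. The generator $(\tfrac{u-1}2,\tfrac{v-1}2)$ corresponds, by \eqref{E:g312}, to $\gamma_3^r[\gamma_1,\gamma_2]^{-1}$, whose $\chi$-value equals $\chi(\gamma_3)^r\chi([\gamma_1,\gamma_2])^{-1}=\bigl(e^{2\pi\mathbf ic/r}\bigr)^r=e^{2\pi\mathbf ic}$, which yields $\lambda_0\tfrac{u-1}2+\mu_0\tfrac{v-1}2\in c+\Z$. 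Finally $(e,f)=\log\gamma_4$ and $(g,h)=\log\gamma_5$ by \eqref{E:Gamma.tgen}, so evaluation there produces the two identities in \eqref{E:lm0ii}.

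The only step needing genuine care is the extension of the character from $\Gamma''/\Z^2$ to $\R^2/\Z^2$ in such a way that the resulting frequency vector has integer entries; everything else is bookkeeping with the explicit generators and the commutator formulas from the proof of Lemma~\ref{L:Gamma}. If one prefers to avoid invoking Pontryagin duality, this extension can be made explicit via Smith normal form: choose a basis $w_1,w_2$ of $\Gamma''$ and integers $d_1\mid d_2$ with $d_1w_1,d_2w_2$ a basis of $\Z^2$, write $\chi''(w_j)=e^{2\pi\mathbf ia_j/d_j}$ with $a_j\in\Z$, and set $(\lambda_0,\mu_0):=\tfrac{a_1}{d_1}w_1^\vee+\tfrac{a_2}{d_2}w_2^\vee$ in terms of the dual basis $w_1^\vee,w_2^\vee$; this vector lies in $\Z^2$ because $\Z^2$ is self-dual with dual basis $\tfrac1{d_1}w_1^\vee,\tfrac1{d_2}w_2^\vee$, and it satisfies $\langle(\lambda_0,\mu_0),w_j\rangle=a_j/d_j$ by construction.
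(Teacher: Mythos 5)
Your proposal is correct: reducing to the central lattice, identifying $\Gamma\cap Z$ with $\Gamma''$ via \eqref{E:GammaZ}, and checking \eqref{E:lm0i}--\eqref{E:lm0ii} by evaluating a frequency character on the five generators of \eqref{E:Gamma''} (using \eqref{E:g312}, \eqref{E:g12.13} and triviality of $\chi$ on commutators) is exactly the bookkeeping the paper does as well. The only genuine difference lies in how $(\lambda_0,\mu_0)$ is produced. The paper does not pass to $\Gamma''/\Z^2$ at all: it simply chooses a real functional $\alpha\in\zoe^*$ agreeing with $\chi$ on some $\Z$-basis of the lattice $\Gamma\cap Z$ --- an elementary linear-algebra step requiring no duality --- and then observes that the resulting real numbers $\lambda_0,\mu_0$ are forced into $r\Z\subseteq\Z$ by the very constraints coming from $\chi([\gamma_1,\gamma_3])=\chi([\gamma_2,\gamma_3])=1$; integrality is thus a byproduct of \eqref{E:lm0i} rather than an input. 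You instead build integrality in from the start, by noting $\chi''$ kills $\Z^2$ and extending the induced character of the finite group $\Gamma''/\Z^2$ to the torus $\R^2/\Z^2$, either by Pontryagin duality (extension of characters from a closed subgroup of a compact abelian group) or, as you rightly anticipate is the delicate point, by the explicit Smith-normal-form construction, which is complete and correct as written ($\tfrac1{d_1}w_1^\vee,\tfrac1{d_2}w_2^\vee$ is indeed a $\Z$-basis of the self-dual lattice $\Z^2$, so your $(\lambda_0,\mu_0)$ is integral). Both arguments are valid; the paper's is leaner in that it avoids the extension step entirely, while yours makes the structural reason for integrality transparent and is self-contained once the Smith-normal-form construction is spelled out.
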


	\begin{proof}
		Since $\Gamma\cap Z$ is a lattice in $Z\cong\R^2$ it admits a basis consisting of two elements.
		Clearly, there exists a functional $\alpha\in\zoe^*$ such that the homomorphism $Z\to U(1)$, $z\mapsto e^{2\pi\mathbf i\alpha(\log z)}$ coincides with $\chi$ on the aforementioned basis of $\Gamma\cap Z$.
		We conclude that 
		\begin{equation}\label{E:chiZ}
			\chi(\gamma)=e^{2\pi\mathbf i\alpha(\log\gamma)}
		\end{equation}
		for all $\gamma\in\Gamma\cap Z$.
		Via the identification $\zoe^*=(\R^2)^*$, we have $\alpha=(\lambda_0,\mu_0)$ for some real numbers $\lambda_0$ and $\mu_0$.
		Putting $\gamma=\gamma_4$ and $\gamma=\gamma_5$ in \eqref{E:chiZ}, we obtain the two equations in \eqref{E:lm0ii}, respectively.
		Putting $\gamma=\gamma_3^r[\gamma_1,\gamma_2]^{-1}$ in \eqref{E:chiZ} and using \eqref{E:g312} we obtain the last equation in \eqref{E:lm0i}, for $\chi(\gamma_3^r[\gamma_1,\gamma_2]^{-1})=e^{2\pi\mathbf ic}$.
		Putting $\gamma=[\gamma_1,\gamma_3]$ and $\gamma=[\gamma_2,\gamma_3]$ in \eqref{E:chiZ} and using \eqref{E:g12.13} we obtain the first and second equation in \eqref{E:lm0i}, respectively, for $\chi$ vanishes on commutators.
	\end{proof}

\section{Decomposition into irreducible representations}\label{S:rep.deco}

	If $\Gamma$ is a lattice in a simply connected nilpotent Lie group $N$, then $L^2(\Gamma\setminus N)$ decomposes into a countable direct sum of irreducible unitary representations of $N$.
	The multiplicities of the representations appearing in this decomposition have been studied by Moore \cite{M65}.
	An explicit formula for these multiplicities has been conjectured by Mostow and proved, independently, by Howe \cite{H71} and Richardson \cite{R71}.
	More generally, for every unitary character $\chi\colon\Gamma\to U(1)$, the induced representation
	\[
		L^2\bigl(N\times_\chi\C\bigr)
		=\left\{g\colon N\to\mathbb C\,
		\middle|\begin{array}{c}\text{$g(\gamma n)=\chi(\gamma)g(n)$ for $\gamma\in\Gamma$ and $n\in N$,}\\|g|\in L^2(\Gamma\setminus N)\end{array}
		\right\}
	\]
	decomposes into a countable direct sum of irreducible unitary representations and the multiplicities are known explicitly, cf.~\cite[Theorem~1]{H71} and \cite[Theorem~5.3]{R71}.

	\begin{lemma}[Howe, Richardson]\label{L:Richardson}
		Let $N$ be a simply connected nilpotent Lie group, suppose $\Gamma$ a lattice in $N$, and let $\chi\colon\Gamma\to U(1)$ denote a unitary character.
		Then $L^2(N\times_\chi\mathbb C)$ decomposes into a countable direct sum of irreducible unitary representations of $N$.
		If an irreducible unitary representation of $N$ appears in this decomposition, then it is induced from a rational maximal character via Kirillov's construction.
		Moreover, if the rational maximal character $(\bar f,M)$ induces $\pi$, then the multiplicity of $\pi$ in $L^2(N\times_\chi\mathbb C)$ coincides with the (finite) cardinality $\sharp((M\setminus N)_\chi/\Gamma)$, that is, the number of $\Gamma$ orbits in 
		\[
			(M\setminus N)_\chi
			=\left\{n\in M\setminus N\,\middle|
			\begin{array}{c}\text{$(\bar f^n,M^n)$ is rational, and}\\\bar f^n|_{\Gamma\cap M^n}=\chi|_{\Gamma\cap M^n}\end{array}
			\right\}.
		\]
		Here $\bar f^n(m)=\bar f(nmn^{-1})$ and $M^n=n^{-1}Mn$ for $n\in N$ and $m\in M$.
	\end{lemma}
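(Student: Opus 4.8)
This is the Howe--Richardson multiplicity theorem, and I would establish it by combining Kirillov's orbit method with Mackey's theory of induced representations. First I would invoke Kirillov's classification: every irreducible unitary representation $\pi$ of $N$ has the form $\pi=\Ind_M^N\bar f$, where $f\in\noe^*$, $\moe\subseteq\noe$ is a polarization subalgebra at $f$, $M=\exp\moe$, and $\bar f(\exp X)=e^{2\pi\ii f(X)}$, and $\pi$ depends only on the coadjoint orbit of $f$. The lattice $\Gamma$ equips $\noe$ with a rational structure via its Malcev coordinates, and for a rational functional $f$ one can select a Jordan--H\"older flag through rational ideals, producing a rational Vergne polarization. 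This is the mechanism forcing every $\pi$ that occurs in $L^2(N\times_\chi\C)$ to be induced from a \emph{rational} maximal character $(\bar f,M)$, i.e.\ one for which $\Gamma\cap M$ is a lattice in $M$ on which $\bar f$ is compatible with $\chi$; so the structural part of the statement reduces to this rationality argument, and the genuine content lies in the multiplicity.

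For the discreteness and finiteness I would use compactness of $\Gamma\backslash N$: right convolution by $\varphi\in C_c^\infty(N)$ acts on $L^2(N\times_\chi\C)$ through an operator with smooth integral kernel on the closed manifold $\Gamma\backslash N$, hence through a Hilbert--Schmidt (indeed trace class) operator, which forces a discrete decomposition into irreducibles with finite multiplicities and only countably many constituents. In particular the multiplicity $m(\pi)$ we want to identify is a finite number.

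The heart of the proof is the computation $m(\pi)=\dim\hom_N\bigl(\Ind_\Gamma^N\chi,\pi\bigr)$ by a Bruhat--Mackey analysis of the intertwining operators between the two monomial representations $\Ind_\Gamma^N\chi$ and $\Ind_M^N\bar f$. Such intertwiners correspond to $\Gamma$-$M$-bi-equivariant distributions on $N$ transforming according to $(\chi,\bar f)$, and these are organized by the $\Gamma$-$M$ double cosets in $N$, equivalently by the $\Gamma$-orbits on $M\backslash N$. On the double coset through $n\in N$ I would show, first, that a nonzero such distribution exists exactly when the conjugated datum $(\bar f^n,M^n)$, with $M^n=n^{-1}Mn$ and $\bar f^n(m)=\bar f(nmn^{-1})$, is again rational and agrees with $\chi$ on $\Gamma\cap M^n$ --- precisely the conditions defining $(M\backslash N)_\chi$ --- and, second, that when it exists it is unique up to a scalar. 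The existence/compatibility part is direct bookkeeping with the transformation laws; the uniqueness part is where the structure enters, since maximality of the isotropic subalgebra $\moe^n$ turns the transverse picture into a Heisenberg situation governed by Stone--von Neumann uniqueness, while nilpotency permits peeling off a one-dimensional central subgroup and inducting on $\dim N$, ruling out higher-order distributional solutions supported on the lower strata. Summing the contributions, each equal to one, over the finitely many admissible double cosets gives $m(\pi)=\sharp\bigl((M\backslash N)_\chi/\Gamma\bigr)$, which is the asserted formula.

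The main obstacle is exactly this intertwining-distribution analysis: showing that each admissible double coset contributes multiplicity precisely one and that no spurious intertwiners are concentrated transversally along the non-open strata. Set up as an induction on $\dim N$ via reduction modulo a one-dimensional central subgroup $Z_0$, it splits into the case where $\pi|_{Z_0}$ is trivial, where one descends all the data to $N/Z_0$, and the case where it is nontrivial, where a Stone--von Neumann reduction eliminates a pair of coordinates; the technically delicate point is tracking how $\Gamma$, $M$, $\chi$, the rationality conditions, and the count $\sharp((M\backslash N)_\chi/\Gamma)$ behave under these two reductions so that the inductive hypothesis can be applied. For the paper at hand one may of course simply cite the theorems of Howe \cite{H71} and Richardson \cite{R71}.
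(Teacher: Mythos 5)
The paper does not prove this lemma at all: it is stated as an attributed, known result and immediately referenced to Howe \cite[Theorem~1]{H71} and Richardson \cite[Theorem~5.3]{R71}, with the surrounding text only recalling the terminology (subordinate/special subalgebras, maximal characters, rationality). So your closing sentence --- that for this paper one may simply cite Howe and Richardson --- is precisely what the paper does, and for the purposes of the paper that is the correct move.

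As a self-contained argument, however, your sketch leaves the two genuinely hard points at the level of assertion. First, the rationality claim: what must be shown is that if $\pi$ occurs in $L^2(N\times_\chi\C)$ then $\pi$ is induced from a \emph{rational} maximal character; your remark that a rational functional admits a rational (Vergne) polarization is the easy converse direction and does not yield this implication --- the needed step is essentially Moore's rationality theorem and is where serious work enters. Second, the multiplicity count: the statement that each admissible $\Gamma$-orbit in $M\setminus N$ carries exactly a one-dimensional space of $(\chi,\bar f)$-equivariant intertwining distributions, and that non-admissible orbits and lower strata carry none, is the actual content of the Howe and Richardson theorems; Mackey's double-coset formalism for finite or compact groups does not transfer automatically to the pair of a discrete subgroup $\Gamma$ and a closed connected subgroup $M$ (the orbit space $\Gamma$-action on $M\setminus N$ need not be tame, and distributional intertwiners concentrated on non-open strata must be excluded by an induction as in the original papers). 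So if your text were meant as a proof it has real gaps exactly at these two points; read as an outline of the classical arguments followed by a citation, it is consistent with how the paper treats the lemma.
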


	Let us briefly recall some of the terminology used in the preceding lemma.
	To this end let $\noe$ denote the Lie algebra of $N$, suppose $f\in\noe^*$, and let $\moe$ denote a maximal subordinate subalgebra, i.e., a subalgebra of maximal dimension in $\noe$ such that $f([\moe,\moe])=0$.
	Then 
	\begin{equation}\label{E:barf}
		\bar f(m):=e^{2\pi\mathbf if(\log m)}
	\end{equation}
	defines a unitary character on the group $M:=\exp(\moe)$.
	According to Kirillov \cite{K62,K04,CG90} such a character induces an irreducible unitary representation of $N$ given by right translation on the Hilbert space
	\[
		\left\{h\colon N\to\C\,
		\middle|\begin{array}{c}\text{$h(mn)=\bar f(m)h(n)$ for $m\in M$ and $n\in N$,}\\|h|\in L^2(M\setminus N)\end{array}
		\right\}.
	\]
	One may always assume $\moe$ to be special \cite[\S3]{R71} and then $(\bar f,M)$ is called a maximal character \cite[\S4]{R71}.
	The group $N$ acts by conjugation on the set of maximal characters \cite[Lemma~4.1]{R71} and the stabilizer of $(\bar f,M)$ is $M$, see \cite[Lemma~5.1]{R71}.
	A maximal character $(\bar f,M)$ is called rational \cite[\S3]{R71} if it can be obtained from a (rational) linear functional $f\in\noe^*$ mapping $\log\Gamma$ into the rational numbers and $\moe$ is rational with respect to the rational structure on $\noe$ provided by $\log\Gamma$.
	Note that $\moe$ is rational if and only if $\Gamma\cap M$ is a lattice in $M$, i.e., if and only if $(\Gamma\cap M)\setminus M$ is compact.
	Clearly, the action of $\Gamma$ preserves the subset of rational maximal characters and the integrality condition $\bar f|_{\Gamma\cap M}=\chi|_{\Gamma\cap M}$.

	In the remaining part of this section we will specialize the preceding lemma to the 5-dimensional Lie group $G$ considered before.
	All irreducible unitary representations of this Lie group have been described explicitly by Dixmier in \cite[Proposition~8]{D58}. 
	There are three types of irreducible unitary representations of $G$ which we now describe in terms of a graded basis $X_1,\dotsc,X_5$ of $\goe$ satisfying \eqref{E:brackets}.
	If $\rho$ is a representation of $G$, we let $\rho'$ denote the corresponding infinitesimal representation of $\goe$.

	\begin{enumerate}[(I)]
		\item	\emph{Scalar representations:}
			For $(\alpha,\beta)\in\R^2$ there is an irreducible unitary representation $\rho_{\alpha,\beta}$ of $G$ on $\C$ such that
			\begin{equation}\label{E:rep.scalar}
				\rho'_{\alpha,\beta}(X_1)=2\pi\mathbf i\alpha,\qquad
				\rho'_{\alpha,\beta}(X_2)=2\pi\mathbf i\beta,
			\end{equation}
			and $\rho'_{\alpha,\beta}(X_3)=\rho'_{\alpha,\beta}(X_4)=\rho'_{\alpha,\beta}(X_5)=0$.
			Via Kirillov's construction, the functional $f\in\goe^*$ with $f(X_1)=\alpha$, $f(X_2)=\beta$, and $f(X_3)=f(X_4)=f(X_5)=0$ induces a representation isomorphic to $\rho_{\alpha,\beta}$.
			These are precisely the irreducible representations which factor through the abelianization $G/[G,G]\cong\R^2$.
		\item	\emph{Schr\"odinger representations:}
			For $0\neq\hbar\in\R$ there is an irreducible unitary representation $\rho_\hbar$ of $G$ on $L^2(\R)=L^2(\R,d\theta)$ such that
			\begin{equation}\label{E:Schroedinger}
				\rho'_\hbar(X_1)=\partial_\theta,\qquad
				\rho'_\hbar(X_2)=2\pi\ii\hbar\cdot\theta,\qquad
				\rho'_\hbar(X_3)=2\pi\ii\hbar,
			\end{equation}
			and $\rho'_\hbar(X_4)=\rho'_\hbar(X_5)=0$.
			Via Kirillov's construction, any functional $f\in\goe^*$ with $f(X_3)=\hbar$ and $f(X_4)=f(X_5)=0$ induces a representation isomorphic to $\rho_\hbar$.
			These are precisely the irreducible unitary representations which factor through the 3-dimensional Heisenberg group $H=G/Z$ but do not factor through the abelianization $G/[G,G]$.
		\item	\emph{Generic representations:}
			For real numbers $\lambda,\mu,\nu$ with $(\lambda,\mu)\neq(0,0)$ there is an irreducible unitary representation $\rho_{\lambda,\mu,\nu}$ of $G$ on $L^2(\R)=L^2(\R,d\theta)$ such that:
			\begin{align}
				\notag
				\rho'_{\lambda,\mu,\nu}(X_1)
				&=\frac\lambda{(\lambda^2+\mu^2)^{1/3}}\cdot\partial_\theta-\frac{2\pi\mathbf i\mu}{(\lambda^2+\mu^2)^{1/3}}\cdot\frac{\theta^2+\nu(\lambda^2+\mu^2)^{-2/3}}2
				\\\notag
				\rho'_{\lambda,\mu,\nu}(X_2)
				&=\frac\mu{(\lambda^2+\mu^2)^{1/3}}\cdot\partial_\theta+\frac{2\pi\mathbf i\lambda}{(\lambda^2+\mu^2)^{1/3}}\cdot\frac{\theta^2+\nu(\lambda^2+\mu^2)^{-2/3}}2
				\\\label{E:rep.gen.X3}
				\rho'_{\lambda,\mu,\nu}(X_3)
				&=2\pi\mathbf i(\lambda^2+\mu^2)^{1/3}\cdot\theta,
				\\\notag
				\rho'_{\lambda,\mu,\nu}(X_4)
				&=2\pi\mathbf i\lambda,
				\\\notag
				\rho'_{\lambda,\mu,\nu}(X_5)
				&=2\pi\mathbf i\mu.
			\end{align}
			This differs from the representation given in \cite[Equation~(24)]{D58} by a conjugation with a unitary scaling on $L^2(\R)$ which we have introduced for better compatibility with the grading automorphism. 
			Note that
			\begin{equation*}
				\rho'_{\lambda,\mu,\nu}\bigl(X_3X_3+2X_1X_5-2X_2X_4\bigr)=(2\pi)^2\nu.
			\end{equation*}
			Any functional $f\in\goe^*$ with $f(X_4)=\lambda$, $f(X_5)=\mu$, and 
			\begin{equation}\label{E:rep.gen.nu}
				f(X_3)^2+2f(X_1)f(X_5)-2f(X_2)f(X_4)=\nu
			\end{equation}
			induces a representation isomorphic to $\rho_{\lambda,\mu,\nu}$.
	\end{enumerate}
	The representations listed above are mutually nonequivalent, and they comprise all equivalence classes of irreducible unitary representations of $G$.
	Given the convention in \eqref{E:barf} it turns out to be convenient to incorporate the factors $2\pi$ in the labeling of the representations, as indicated above, when considering integrality with respect to a lattice $\Gamma$ generated by the exponentials of $\tilde\gamma_i$ as in \eqref{E:Gamma.tgen}.

	\begin{lemma}\label{L:Richardson:235}
		Let $\Gamma$ denote the lattice spanned by $\gamma_1,\dotsc,\gamma_5$, where $\gamma_i=\exp\tilde\gamma_i$ and $\tilde\gamma_1,\dotsc,\tilde\gamma_5$ are as indicated in \eqref{E:Gamma.tgen}.
		Suppose $\chi\colon\Gamma\to U(1)$ is a unitary character, and let $a,b,c$ be real numbers such that $\chi(\gamma_1)=e^{2\pi\mathbf ia}$, $\chi(\gamma_2)=e^{2\pi\mathbf ib}$, and $\chi(\gamma_3)=e^{2\pi\mathbf ic/r}$.
		Then $L^2(G\times_\chi\mathbb C)$ decomposes into a countable direct sum of irreducible unitary representations with the following multiplicities:
		\begin{enumerate}[(I)]
			\item	For $\alpha,\beta\in\mathbb R$ the representation $\rho_{\alpha,\beta}$ appears with multiplicity
				\[	
					m\bigl(\rho_{\alpha,\beta}\bigr)
					=\begin{cases}
						1&\text{if $\chi|_{\Gamma\cap[G,G]}=1$, $\alpha\in a+\mathbb Z$, $\beta\in b+\mathbb Z$, and}
						\\0&\text{otherwise.}
					\end{cases}
				\]
				The condition $\chi|_{\Gamma\cap[G,G]}=1$ is equivalent to $c\in r\Z$ and $\chi(\gamma_4)=\chi(\gamma_5)=1$. 
			\item	For $0\neq\hbar\in\mathbb R$ the representation $\rho_\hbar$ appears with multiplicity
				\[
					m\bigl(\rho_\hbar\bigr)
					=\begin{cases}
						|\hbar|&\text{if $\chi|_{\Gamma\cap Z}=1$, $\hbar\in c+r\Z$, and}
						\\0&\text{otherwise.}
					\end{cases}
				\]
				The condition $\chi|_{\Gamma\cap Z}=1$ is equivalent to $c\in\mathbb Z$ and $\chi(\gamma_4)=\chi(\gamma_5)=1$.
			\item	For $\lambda,\mu,\nu\in\mathbb R$ with $(\lambda,\mu)\neq(0,0)$ the multiplicity of the representation $\rho_{\lambda,\mu,\nu}$ vanishes unless 
				\begin{equation}\label{E:lm.lat1}
					\lambda\in r\mathbb Z,\quad\mu\in r\mathbb Z,\quad\lambda\tfrac{u-1}2+\mu\tfrac{v-1}2\in c+\mathbb Z,
				\end{equation}
				\begin{equation}\label{E:lm.lat2}
					e^{2\pi\mathbf i(\lambda e+\mu f)}=\chi(\gamma_4),\quad e^{2\pi\mathbf i(\lambda g+\mu h)}=\chi(\gamma_5),
				\end{equation}
				and
				\begin{equation}\label{E:lm.lat3}
					\nu=\nu_0\mod r\Z
				\end{equation}
				where
				\begin{equation}\label{E:nu0.def}
					\nu_0:=2(a\mu-b\lambda)+\frac{\lambda^2\mu^2}{12d^2}+\frac{\bigl(2w-(\lambda+\mu)+\lambda\mu/d\bigr)^2}4
				\end{equation}
				with $d:=\gcd(\lambda,\mu)$ and
				\begin{equation}\label{E:w.def}
					w:=c-\lambda\tfrac{u-1}2-\mu\tfrac{v-1}2.
				\end{equation}
				In this case, the multiplicity is
				\begin{equation}\label{E:mlmn}
					m\bigl(\rho_{\lambda,\mu,\nu}\bigr)
					=\sharp\Bigl\{k\in\mathbb Z/\tfrac dr\mathbb Z
					\Bigm|\nu=\nu_0+rk(rk+d)+2rkw\mod 2d\Z
					\Bigr\}.
				\end{equation}
		\end{enumerate}
	\end{lemma}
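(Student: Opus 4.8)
The plan is to deduce everything from the Howe--Richardson formula in Lemma~\ref{L:Richardson}, applied with $N=G$, by working through the three families of irreducible unitary representations described above one at a time. In each case I must exhibit a linear functional $f\in\goe^*$ together with a special maximal subordinate subalgebra $\moe$ realizing the representation via Kirillov's construction, identify $M=\exp\moe$ and the homogeneous space $M\setminus G$, and then determine for which cosets $Mn$ the conjugate $(\bar f^n,M^n)$ is rational and satisfies the integrality condition $\bar f^n|_{\Gamma\cap M^n}=\chi|_{\Gamma\cap M^n}$; the multiplicity is the number of $\Gamma$-orbits among these cosets.

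\textbf{Scalar and Schr\"odinger representations.} For $\rho_{\alpha,\beta}$ the functional $f$ annihilates $[\goe,\goe]=\langle X_3,X_4,X_5\rangle$, so one may take $\moe=\goe$, whence $M=G$ and $M\setminus G$ is a single point; thus $\rho_{\alpha,\beta}$ occurs with multiplicity $0$ or $1$, the latter precisely when $\bar f|_\Gamma=\chi$. Evaluating $\bar f(\gamma_i)=e^{2\pi\mathbf i f(\log\gamma_i)}$ on the generators \eqref{E:Gamma.tgen} and using $\log\tilde\gamma_3,\log\tilde\gamma_4,\log\tilde\gamma_5\in[\goe,\goe]$, this reduces to $\alpha\in a+\Z$, $\beta\in b+\Z$ and $\chi(\gamma_3)=\chi(\gamma_4)=\chi(\gamma_5)=1$; since $\chi$ kills commutators and $\Gamma\cap[G,G]$ is generated modulo $[\Gamma,\Gamma]$ by $\gamma_3,\gamma_4,\gamma_5$ by \eqref{E:GammaGG} and \eqref{E:GammaGamma}, the last three conditions amount to $\chi|_{\Gamma\cap[G,G]}=1$, i.e.\ to $c\in r\Z$ and $\chi(\gamma_4)=\chi(\gamma_5)=1$. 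For $\rho_\hbar$ a functional with $f(X_3)=\hbar$, $f(X_4)=f(X_5)=0$ admits the special polarization $\moe=\langle X_2,X_3,X_4,X_5\rangle$, so $M\setminus G\cong\R$; since these representations factor through the Heisenberg group $H=G/Z$, this is the classical Heisenberg computation: rationality together with the integrality condition forces $\chi|_{\Gamma\cap Z}=1$ (equivalently $c\in\Z$, $\chi(\gamma_4)=\chi(\gamma_5)=1$, by \eqref{E:GammaZ}), matching of central characters gives $\hbar\in c+r\Z$, and the admissible parameter then ranges over a single lattice coset in $\R$ carrying $|\hbar|$ distinct $\Gamma$-orbits; one may reproduce this or cite \cite{R71}.

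\textbf{Generic representations.} This is the substantial case. A functional inducing $\rho_{\lambda,\mu,\nu}$ has $f(X_4)=\lambda$, $f(X_5)=\mu$ and satisfies \eqref{E:rep.gen.nu}; as $[\goe,\goe]$ is abelian, a special maximal subordinate subalgebra is $\moe_{\lambda,\mu}=\langle\mu X_1-\lambda X_2,X_3,X_4,X_5\rangle$, so $M\setminus G\cong\R$. The centre $Z$ lies in $M^n$ for every $n$ and $\bar f^n$ agrees with $\bar f$ on $Z$, so the integrality condition already forces $\bar f|_{\Gamma\cap Z}=\chi|_{\Gamma\cap Z}$; spelling this out on the generators of $\log(\Gamma\cap Z)$ in \eqref{E:Gamma''} (via \eqref{E:g12.13}, \eqref{E:g312}) yields \eqref{E:lm.lat1} and \eqref{E:lm.lat2}, precisely the conditions of Lemma~\ref{L:lm0} on $(\lambda,\mu)$. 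Assuming these, I would parametrize $M\setminus G$ by $n=\exp(sX_1)$, compute the coadjoint translate $f^n$ and the conjugate subalgebra $M^n$ from the Baker--Campbell--Hausdorff formula \eqref{E:zxy}, and determine when $\Gamma\cap M^n$ is a lattice in $M^n$, i.e.\ when $M^n$ is rational, with $d=\gcd(\lambda,\mu)$ entering as the index controlling the $X_3$-component of $\Gamma\cap M^n$. The surviving integrality condition on the $X_3$-direction generator of $\Gamma\cap M^n$ pins $f^n(X_3)$ modulo a suitable value, and substituting into the coadjoint invariant $f^n(X_3)^2+2f^n(X_1)f^n(X_5)-2f^n(X_2)f^n(X_4)=\nu$ produces the congruence \eqref{E:lm.lat3} with $\nu_0$ as in \eqref{E:nu0.def}, the term $\lambda^2\mu^2/12d^2$ and the square $\bigl(2w-(\lambda+\mu)+\lambda\mu/d\bigr)^2/4$ arising from the degree-two and degree-three corrections in the fourth and fifth coordinates of \eqref{E:zxy}, with $w$ as in \eqref{E:w.def}. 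Finally I would count $\Gamma$-orbits: the admissible $s$ form a coset of $\tfrac dr\Z$, and tracking the action of $\gamma_1$ and $\gamma_2$ on it — the relevant coordinate shifting by $rk(rk+d)+2rkw$ under the generator indexed by $k$, with true period $2d\Z$ — gives the finite count \eqref{E:mlmn}.

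\textbf{Main obstacle.} Cases~(I) and~(II) are routine; the real work is the coadjoint-orbit and lattice bookkeeping in~(III) — pinning down $M^n$ and $\Gamma\cap M^n$ and unwinding the Baker--Campbell--Hausdorff corrections in \eqref{E:zxy} carefully enough to recognize the integrality constraint on $\nu$ as exactly the quadratic congruence \eqref{E:lm.lat3} with the somewhat delicate polynomial $\nu_0$ of \eqref{E:nu0.def}, and the orbit count as \eqref{E:mlmn}. Keeping track of $r$, of $d=\gcd(\lambda,\mu)$, and of the offset $w$ simultaneously, and correctly identifying $2d\Z$ rather than $d\Z$ as the period in \eqref{E:mlmn}, is the crux.
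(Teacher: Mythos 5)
Your strategy coincides with the paper's: specialize Lemma~\ref{L:Richardson} to $N=G$, treat the three Dixmier families separately with the same polarizations ($\moe=\goe$ in the scalar case, $\langle X_2,X_3,X_4,X_5\rangle$ in the Schr\"odinger case, $\langle\mu X_1-\lambda X_2,X_3,X_4,X_5\rangle$ in the generic case), impose rationality and the integrality condition on $\Gamma\cap M$, and count $\Gamma$-orbits via the coadjoint shift of the transverse coordinate. Cases (I) and (II) are treated adequately. In case (III), however, your text is a program rather than a proof: the step that carries the entire content of the lemma --- identifying $\Gamma\cap M$ (generated by $\gamma_1^{\mu/d}\gamma_2^{-\lambda/d}$, $\gamma_3$ and $\Gamma\cap Z$), translating $\bar f|_{\Gamma\cap M}=\chi|_{\Gamma\cap M}$ into the existence of $k\in\Z$ with $f(X_3)=rk+w-(\lambda+\mu)/2$ and $\nu\equiv\nu_0+rk(rk+d)+2rkw \bmod 2d\Z$, and then counting $\Gamma$-orbits --- is only announced (``I would compute\dots''), so \eqref{E:lm.lat3}, \eqref{E:nu0.def} and \eqref{E:mlmn} are never actually derived. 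Note also that bringing the congruence into the stated form requires the parity observation $d-(\lambda+\mu)+\lambda\mu/d=d(1-\lambda/d)(1-\mu/d)\in2d\Z$ (as $\lambda/d$ and $\mu/d$ cannot both be even); your assertion that the ``true period'' is $2d\Z$ silently presupposes exactly this.

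Two smaller slips in (III): since $\moe\supseteq[\goe,\goe]$ it is an ideal depending only on the coadjoint invariants $(\lambda,\mu)$, so $M^n=M$ for every $n$; what varies along $M\setminus G$ is the functional $f^n$ (in effect $f^n(X_3)=f_3+x_1\lambda+x_2\mu$), not the rationality of $M^n$, so ``determine when $M^n$ is rational'' is not the relevant question. Moreover, the transversal $n=\exp(sX_1)$ degenerates when $\lambda=0$ (then $X_1\in\moe$); parametrizing cosets by the value of $f^n(X_3)$, as the paper does, avoids this and makes the orbit count (shifts by $\lambda\Z+\mu\Z=d\Z$, hence $k\in\Z/\tfrac dr\Z$) transparent. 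None of this derails your approach, but as it stands the quantitative part of (III) --- precisely the crux you yourself single out --- remains unproved.
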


	\begin{proof}
		We specialize Lemma~\ref{L:Richardson} to $N=G$.
		Suppose $f\in\goe^*$ and put $f_i=f(X_i)$.

		Let us begin by considering the case $f|_{[\goe,\goe]}=0$, i.e., $f_3=f_4=f_5=0$.
		Via Kirillov's construction, such a functional induces a representation isomorphic to the scalar representation labeled $\rho_{f_1,f_2}$ in \eqref{E:rep.scalar}.
		In this case, $\moe=\goe$ is the unique (rational) maximal subordinated subalgebra.
		Hence, $M=G$.
		The corresponding maximal character $(\bar f,M)$ is rational iff $f_1$ and $f_2$ are both rational numbers, cf.~\eqref{E:Gamma.tgen} and \eqref{E:barf}.
		Moreover, $\bar f|_{\Gamma\cap M}=\chi|_{\Gamma\cap M}$ if and only if $\bar f(\gamma_i)=\chi(\gamma_i)$ for $i=1,\dotsc,5$.
		As $\bar f(\gamma_i)=e^{2\pi\mathbf if_i}$ for $i=1,2$ and $\bar f(\gamma_i)=1$ for $i=3,4,5$, this is the case iff $f_1\in a+\mathbb Z$, $f_2\in b+\mathbb Z$, and $1=\chi(\gamma_3)=\chi(\gamma_4)=\chi(\gamma_5)$.
		As $(\bar f,M)$ is fixed under the action of $G$, each of these representations occurs with multiplicity one in view of Lemma~\ref{L:Richardson}.
		According to Lemma~\ref{L:Gamma}, the group $\Gamma\cap[G,G]$ is generated by $\gamma_3,[\gamma_1,\gamma_3],[\gamma_2,\gamma_3],\gamma_3^r[\gamma_1,\gamma_2]^{-1},\gamma_4,\gamma_5$, see \eqref{E:g312} and \eqref{E:g12.13}.
		Hence, the condition $\chi|_{\Gamma\cap[G,G]}=1$ is equivalent to $\chi(\gamma_3)=\chi(\gamma_4)=\chi(\gamma_5)=1$.

		Let us next consider the case $f|_{\zoe}=0$ and $f_{[\goe,\goe]}\neq0$, i.e., $f_4=f_5=0$ and $f_3\neq0$.
		Via Kirillov's construction, such a functional induces a representation isomorphic to the Schr\"odinger representation labeled $\rho_{f_3}$ in \eqref{E:Schroedinger}.
		In this case, the subspace $\moe$ spanned by $X_2,X_3,X_4,X_5$ is a rational maximal subordinated subalgebra which is stable under the action of $G$.
		Using Lemma~\ref{L:Gamma}, we see that the group $\Gamma\cap M$ is generated by 
		\[
			\gamma_2,\quad\gamma_3,\quad\text{and}\quad\Gamma\cap Z.
		\]
		The corresponding maximal character $(\bar f,M)$ is rational iff $f_2$ and $f_3$ are both rational numbers.
		Moreover, $\bar f|_{\Gamma\cap M}=\chi|_{\Gamma\cap M}$ if and only if $\bar f(\gamma_2)=\chi(\gamma_2)$, $\bar f(\gamma_3)=\chi(\gamma_3)$, and $\bar f|_{\Gamma\cap Z}=\chi|_{\Gamma\cap Z}$.
		Equivalently, $f_2\in b+\mathbb Z$, $f_3\in c+\mathbb Z$, and $1=\chi|_{\Gamma\cap Z}$.
		Suppose $g=\exp(\sum_{i=1}^5x_iX_i)$.
		A straightforward calculation yields $(\Ad_g^*f)(X_2)=f_2+x_1f_3$ and $(\Ad_g^*f)(X_i)=f_i$ for $i=3,4,5$.
		Hence, $(\bar f^g,M^g=M)$ is rational and $\bar f^g|_{\Gamma\cap M^g}=\chi|_{\Gamma\cap M^g}$ iff $f_2+x_1f_3\in b+\mathbb Z$, $f_3\in c+\mathbb Z$, and $\chi|_{\Gamma\cap Z}=1$.
		As integral $x_1$ correspond to $g\in\Gamma$, we conclude form Lemma~\ref{L:Richardson}, that the multiplicity is $|f_3|$, provided $f_3\in c+\mathbb Z$, and $\chi|_{\Gamma\cap Z}=1$.
		According to Lemma~\ref{L:Gamma}, the group $\Gamma\cap Z$ is generated by $[\gamma_1,\gamma_3],[\gamma_2,\gamma_3],\gamma_3^r[\gamma_1,\gamma_2]^{-1},\gamma_4,\gamma_5$.
		Hence, the condition $\chi|_{\Gamma\cap Z}=1$ is equivalent to $c\in\mathbb Z$ and $1=\chi(\gamma_4)=\chi(\gamma_5)$.
	
		Let us finally turn to the case $f|_{\zoe}\neq0$, i.e., $(f_4,f_5)\neq(0,0)$.
		Via Kirillov's construction, such a functional induces a representation isomorphic to the generic representation labeled $\rho_{\lambda,\mu,\nu}$ in \eqref{E:rep.gen.X3} where, cf.~\eqref{E:rep.gen.nu}, 
		\begin{equation}\label{E:lmn.v.fi}
			\lambda=f_4,\qquad\mu=f_5,\qquad\nu=f_3^2+2(f_1f_5-f_2f_4).
		\end{equation}
		In this case, the subspace $\moe$ spanned by $f_5X_1-f_4X_2,X_3,X_4,X_5$ is the unique maximal subordinated subalgebra.
		We assume that the corresponding maximal character $(\bar f, M)$ is rational, i.e., $f_1f_5-f_2f_4,f_3,f_4,f_5$ are all rational.
		As the group $\Gamma\cap Z$ is generated by $[\gamma_1,\gamma_3],[\gamma_2,\gamma_3],\gamma_3^r[\gamma_1,\gamma_2]^{-1},\gamma_4,\gamma_5$, we have $\bar f|_{\Gamma\cap Z}=\chi|_{\Gamma\cap Z}$ if and only if \eqref{E:lm.lat1} and \eqref{E:lm.lat2} hold true, cf.~\eqref{E:g312}, \eqref{E:g12.13}, and \eqref{E:Gamma.tgen}.
		We assume from now on that this is the case. 
		In particular, $f_4,f_5$ are integral and we write $d=\gcd(f_4,f_5)\in r\Z$.
		Using Lemma~\ref{L:Gamma} and \eqref{E:zxy} we see that the group $\Gamma\cap M$ is generated by 
		\[
			\gamma_1^{f_5/d}\gamma_2^{-f_4/d}=\exp\begin{pmatrix}f_5/d\\-f_4/d\\-f_4f_5/2d^2\\-f_4f_5^2/12d^3\\-f_4^2f_5/12d^3\end{pmatrix},\quad\gamma_3=\exp\begin{pmatrix}0\\0\\1/r\\u/2r\\v/2r\end{pmatrix},\quad\text{and}\quad\Gamma\cap Z.
		\]
		Hence, $\bar f|_{\Gamma\cap M}=\chi|_{\Gamma\cap M}$ if and only if (furthermore)
		\[
			\frac{f_1f_5-f_2f_4}d-\frac{f_3f_4f_5}{2d^2}-\frac{f_4^2f_5^2}{6d^3}=\frac{af_5-bf_4}d\mod\Z
		\] 
		and 
		\[
			f_3/r+(f_4u+f_5v)/2r=c/r\mod\Z.
		\]
		Using \eqref{E:lmn.v.fi}, \eqref{E:w.def} and \eqref{E:nu0.def}, this is readily seen to be equivalent to
		\[
			\nu=\nu_0+\bigl(f_3+\lambda\mu/2d\bigr)^2-\bigl(w-(\lambda+\mu)/2+\lambda\mu/2d\bigr)^2\mod 2d\Z
		\]
		and
		\[
			f_3=w-(\lambda+\mu)/2\mod r\Z.
		\]
		This is the case if and only if there exists an integer $k$ such that 
		\[
			f_3=rk+w-(\lambda+\mu)/2
		\]
		and
		\[
			\nu=\nu_0+rk\bigl(rk+2w-(\lambda+\mu)+\lambda\mu/d\bigr)\mod2d\Z.
		\]
		The latter can be replaced with the equivalent condition 
		\[
			\nu=\nu_0+rk(rk+d)+2rkw\mod2d\Z,
		\]
		for we have
		\[
			d-(\lambda+\mu)+\lambda\mu/d=d(1-\lambda/d)(1-\mu/d)\in2d\mathbb Z,
		\]
		as $\lambda/d$ and $\mu/d$ can not both be even.
		In particular, \eqref{E:lm.lat3} must hold in this case.
		Recall that $\lambda$, $\mu$ and $\nu$ are invariant under coadjoint action by $G$.
		Moreover, for $g=\exp(x_1X_1+\cdots+x_5X_5)$, we have $(\Ad_g^*f)(X_3)=f_3+x_1f_4+x_2f_5$.
		Hence, $(\bar f,M)$ and $(\bar f^g,M^g)$ lie in the same $\Gamma$-orbit iff $(\Ad_g^*f)(X_3)-f_3$ is divisible by $d$.
		The formula for the multiplicity now follows from Lemma~\ref{L:Richardson}.
	\end{proof}

	For integers $l,r,w,n\in\mathbb Z$ with $l,r\geq1$ we define 
	\begin{equation}\label{E:mult}
		m(l,r,w,n):=\sharp\Bigl\{k\in\mathbb Z/l\mathbb Z\Bigm|rk(k+l)+2wk\equiv n\mod2l\mathbb Z\Bigr\}.
	\end{equation}
	Clearly,
	\begin{equation}\label{E:M.mult}
		m(l,r,w,n+2l)=m(l,r,w,n)
	\end{equation}
	and
	\begin{equation}\label{E:m.total}
		\sum_{n=1}^{2l}m(l,r,w,n)=l.
	\end{equation}

	With this notation, the statement in Lemma~\ref{L:Richardson:235} may be expressed in the form:
	\begin{multline}\label{E:rep.deco}
		L^2(G\times_\chi\mathbb C)
		=\underbrace{\bigoplus_{(\alpha,\beta)\in(a,b)+\Z^2}\rho_{\alpha,\beta}}_{\substack{\textrm{only appears if}\\\textrm{$\chi|_{\Gamma\cap[G,G]}=1$}}}
		\,\,\,\oplus\,\,\,\underbrace{\bigoplus_{\hbar\in c+r\Z}|\hbar|\rho_\hbar}_{\substack{\textrm{only appears if}\\\textrm{$\chi|_{\Gamma\cap Z}=1$}}}
		\\
		\oplus\bigoplus_{\substack{(0,0)\neq(\lambda,\mu)\in\\(\lambda_0,\mu_0)+(\Gamma'')^*}}\quad\bigoplus_{n\in\Z}\,\,\,m\bigl(\tfrac dr,r,w,n\bigr)\rho_{\lambda,\mu,\nu_0+rn}.
	\end{multline}
	Here $\lambda_0,\mu_0\in r\Z$ are as in Lemma~\ref{L:lm0}, and
	\begin{equation}\label{E:Gamma''*}
		(\Gamma'')^*=\left\{(l,m)\in\R^2\middle|\begin{array}{c}\frac lr\in\Z,\frac mr\in\Z,l\frac{u-1}2+m\frac{v-1}2\in\mathbb Z\\le+mf\in\Z,lg+mh\in\Z\end{array}
		\right\}
		\subseteq(r\Z)\times(r\Z)
	\end{equation}
	denotes the lattice dual to the lattice $\Gamma''$ spanned by the vectors in \eqref{E:Gamma''}.
	Moreover, $d=\gcd(\lambda,\mu)$, and $w$, $\nu_0$ are defined \eqref{E:nu0.def} and \eqref{E:w.def}.

\section{Decomposition of the zeta function}\label{S:zeta.deco}

	In this section we use the decomposition in \eqref{E:rep.deco} to decompose the twisted Rumin complex associated with a standard (2,3,5) distribution on the nilmanifold $\Gamma\setminus G$, a standard fiberwise graded Euclidean inner product on $\mathfrak t(\Gamma\setminus G)$, and a unitary character $\chi\colon\Gamma\to U(1)$.
	This yields a decomposition of the corresponding zeta function, cf.~\eqref{E:zzz}.

	We continue to use a graded basis $X_1,\dotsc,X_5$ of $\goe$ satisfying the relations in \eqref{E:brackets}.
	Let $\mathcal D_G$ denote the left invariant distribution on $G$ spanned by $X_1$ and $X_2$.
	Left translation provides a trivialization of the tangent bundle that induces a trivialization of the bundle of osculating algebras, $\mathfrak tG=G\times\goe$.
	Passing to the fiberwise Lie algebra cohomology, we obtain a trivialization of the vector bundle $\mathcal H^q(\mathfrak tG)=G\times H^q(\goe)$.
	Via this identification, the untwisted Rumin differential is a left invariant differential operator $D_q\colon C^\infty(G)\otimes H^q(\goe)\to C^\infty(G)\otimes H^{q+1}(\goe)$ which may be considered as 
	\begin{equation}\label{E:Dq.Ug}
		D_q\in\mathcal U(\goe)\otimes L\bigl(H^q(\goe),H^{q+1}(\goe)\bigr)
	\end{equation}
	where $\mathcal U(\goe)$ denotes the universal enveloping algebra of $\goe$.
	With respect to a particular basis of $H^q(\goe)$, the Rumin differentials take the form:
	\begin{align*}
		D_0&=\begin{pmatrix}X_1\\ X_2\end{pmatrix}
		\\
		D_1&=\begin{pmatrix}-X_{112}-X_{13}-X_4&X_{111}\\-\sqrt2X_{122}-\sqrt2X_5&\sqrt2X_{211}-\sqrt2X_4\\-X_{222}&X_{221}-X_{23}-X_5\end{pmatrix}
		\\
		D_2&=\begin{pmatrix}-X_{12}-X_3&X_{11}/\sqrt2&0\\-X_{22}/\sqrt2&-\tfrac32X_3&X_{11}/\sqrt2\\0&-X_{22}/\sqrt2&X_{21}-X_3\end{pmatrix}
		\\
		D_3&=\begin{pmatrix}X_{122}+X_{32}-X_5&-\sqrt2X_{112}+\sqrt2X_4&X_{111}\\X_{222}&-\sqrt2X_{221}-\sqrt2X_5&X_{211}-X_{31}+X_4\end{pmatrix}
		\\
		D_4&=\begin{pmatrix}-X_2,X_1\end{pmatrix}
	\end{align*}
	Here we are using the notation $X_{j_1\cdots j_k}=X_{j_1}\cdots X_{j_k}$.
	These formulas for $D_q$ have been derived in \cite[Appendix~B]{DH17}, where they are expressed with respect to a slightly different basis, see also \cite[Example~4.21]{DH22} or \cite[Section~3.3]{H23}.

	Suppose $\Gamma$ is a lattice in $G$ that is of the form considered in Section~\ref{S:lat}, i.e., generated by the exponentials of $\tilde\gamma_i$ as in \eqref{E:Gamma.tgen}.
	Furthermore, let $\chi\colon\Gamma\to U(1)$ be a unitary character.
	For the sections of the associated flat line bundle $F_\chi$ over the nilmanifold $\Gamma\setminus G$ we have a canonical identification
	\begin{equation}\label{E:F.chi.sec}
		\Gamma^\infty(F_\chi)
		=C^\infty(G\times_\chi\C)
		:=\left\{f\in C^\infty(G,\C)\middle|\begin{array}{c}f(\gamma g)=\chi(\gamma)f(g)\\\text{for $\gamma\in\Gamma$ and $g\in G$}\end{array}\right\}.
	\end{equation}
	The left invariant 2-plane field $\mathcal D_G$ descends to a generic distribution of rank two on the nilmanifold $\Gamma\setminus G$ which we denote by $\mathcal D_{\Gamma\setminus G}$.
	The trivialization $\mathcal H^q(\mathfrak tG)=G\times H^q(\goe)$ mentioned above gives rise to a trivialization of the vector bundle $\mathcal H^q(\mathfrak t(\Gamma\setminus G))=(\Gamma\setminus G)\times H^q(\goe)$.
	Combining this with \eqref{E:F.chi.sec} we obtain a canonical identification
	\begin{equation}\label{E:HqF.G}
		\Gamma^\infty\bigl(\mathcal H^q(\mathfrak t(\Gamma\setminus G))\otimes F_\chi\bigr)=C^\infty(G\times_\chi\C)\otimes H^q(\goe).
	\end{equation}
	Via this identification the twisted Rumin differentials become operators
	\begin{equation}\label{E:DqG}
		D_q\colon C^\infty(G\times_\chi\C)\otimes H^q(\goe)\to C^\infty(G\times_\chi\C)\otimes H^{q+1}(\goe)
	\end{equation}
	which are given by the matrices in \eqref{E:Dq.Ug}, with the universal algebra now acting in the (induced) representation $C^\infty(G\times_\chi\C)$.

	Let $g$ denote the graded Euclidean inner product on $\goe$ that turns $X_1,\dotsc,X_5$ into an orthonormal basis.
	The corresponding left invariant fiberwise graded Euclidean inner product on the bundle of osculating algebras $\mathfrak tG=G\times\goe$ descends to a fiberwise graded Euclidean inner product $g_{\Gamma\setminus G}$ on the bundle of osculating algebras $\mathfrak t(\Gamma\setminus G)$ over $\Gamma\setminus G$.
	The flat line bundle $F_\chi$ comes with a canonical fiberwise Hermitian inner product denoted by $h_\chi$.
	The metrics $g_{\Gamma\setminus G}$ and $h_\chi$ provide an $L^2$ inner product on the space $\Gamma^\infty\bigl(\mathcal H^q(\mathfrak t(\Gamma\setminus G))\otimes F_\chi\bigr)$.
	Via the identification in \eqref{E:HqF.G} this inner product corresponds to the tensor product of the standard $L^2$ inner product on the induced representation $C^\infty(G\times_\chi\C)$ and the Hermitian inner product on $H^q(\goe)$ induced by $g$.

	In view of \eqref{E:HqF.G}, the decomposition of the induced representation described in \eqref{E:rep.deco} provides a decomposition of the twisted Rumin complex over $\Gamma\setminus G$ into a countable orthogonal direct sum of Rumin complexes, each associated with an irreducible unitary representations of $G$.
	For every irreducible unitary representation $\rho$ of $G$, we let $\rho(D_q)$ denote the operator induced by \eqref{E:Dq.Ug} in this representation, and consider the corresponding Rumin--Seshadri operator,
	\begin{equation}\label{E:Delta.rho}
		\Delta_{\rho,q}:=\bigl(\rho(D_{q-1})\rho(D_{q-1})^*\bigr)^{a_{q-1}}+\bigl(\rho(D_q)^*\rho(D_q)\bigr)^{a_q}.
	\end{equation}
	Moreover, we put
	\begin{equation}\label{E:zeta.rho.def}
		\zeta_\rho(s)
		:=\str\bigl(N\Delta_\rho^{-s}\bigr)
		:=\sum_{q=0}^5(-1)^qN_q\tr\Delta_{\rho,q}^{-s}.
	\end{equation}
	These zeta functions are known to converge for $\Re s$ sufficiently large, depending on $\rho$, and they admit analytic continuation to meromorphic functions on the entire complex plane which are holomorphic at $s=0$, see \cite[Theorem~1]{H23}.
	As the Rumin complex is a Rockland complex, the operators $\Delta_{\rho,q}$ have trivial kernel for nontrivial $\rho$. 
	Clearly, $\Delta_{\rho,q}$ vanishes if $\rho$ is the trivial representation, and so does $\zeta_\rho(s)$.

	Using the notation from Lemma~\ref{L:Richardson:235} we define
	\begin{equation}\label{E:zeta.I}
		\zeta_{\I,\Gamma,\chi}(s)
		:=\sideset{}{'}\sum_{(\alpha,\beta)\in(a,b)+\Z^2}\zeta_{\rho_{\alpha,\beta}}(s)
	\end{equation}
	if $\chi|_{\Gamma\cap[G,G]}=1$, and $\zeta_{\I,\Gamma,\chi}(s):=0$ otherwise.
	Moreover, 
	\begin{equation}\label{E:zeta.II}
		\zeta_{\II,\Gamma,\chi}(s)
		:=\sideset{}{'}\sum_{\hbar\in c+r\Z}|\hbar|\zeta_{\rho_\hbar}(s)
	\end{equation}
	if $\chi|_{\Gamma\cap Z}=1$, and $\zeta_{\II,\Gamma,\chi}(s):=0$ otherwise.
	Finally,
	\begin{equation}\label{E:zeta.III}
		\zeta_{\III,\Gamma,\chi}(s)
		:=\sideset{}{'}\sum_{\substack{(\lambda,\mu)\in\\(\lambda_0,\mu_0)+(\Gamma'')^*}}\,\,\,\sum_{n\in\Z}\,\,\,m(\tfrac dr,r,w,n)\zeta_{\rho_{\lambda,\mu,\nu_0+nr}}(s).
	\end{equation}
	Here $\lambda_0,\mu_0$ are as in Lemma~\ref{L:lm0}, $(\Gamma'')^*$ is the dual lattice in \eqref{E:Gamma''*}, $d=\gcd(\lambda,\mu)$, $w$ is given in \eqref{E:w.def}, $\nu_0$ is given in \eqref{E:nu0.def}, and the multiplicity is defined in \eqref{E:mult}.
	Moreover, we are using the common convention to decorate the summation symbol with a prime to indicate that the summand with index zero (if any) is omitted.

	\begin{lemma}\label{L:zeta.deco}
		For all unitary characters $\chi\colon\Gamma\to U(1)$ and $\Re s>10/2\kappa$ we have
		\begin{equation}\label{E:zeta.deco}
			\zeta_{\Gamma\setminus G,\mathcal D_{\Gamma\setminus G},F_\chi,g_{\Gamma\setminus G},h_\chi}(s)
			=\zeta_{\I,\Gamma,\chi}(s)+\zeta_{\II,\Gamma,\chi}(s)+\zeta_{\III,\Gamma,\chi}(s)
		\end{equation}
		where the left hand side has been defined in \eqref{E:zeta.M.def}.
	\end{lemma}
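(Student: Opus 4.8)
The plan is to unwind the definitions and match the two sides term by term, using that the decomposition in \eqref{E:rep.deco} of the induced representation $L^2(G\times_\chi\C)$ into irreducible unitary representations of $G$ is orthogonal and $G$-equivariant, hence compatible with the left invariant differential operators $D_q$. First I would observe that, via the identification \eqref{E:HqF.G}, the twisted Rumin differential $D_q$ in \eqref{E:DqG} is obtained by letting the fixed element \eqref{E:Dq.Ug} of $\mathcal U(\goe)\otimes L(H^q(\goe),H^{q+1}(\goe))$ act in the representation $C^\infty(G\times_\chi\C)$. Since \eqref{E:rep.deco} restricts to a decomposition of the smooth vectors $C^\infty(G\times_\chi\C)=\bigoplus_\rho m(\rho)\cdot C^\infty(\rho)$ as $G$-modules (and the differential operators $D_q$, built from $\mathcal U(\goe)$, preserve this decomposition), the whole twisted Rumin complex over $\Gamma\setminus G$ splits as the corresponding countable orthogonal direct sum $\bigoplus_\rho m(\rho)\cdot\rho(D_*)$ of Rumin complexes in irreducible representations. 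Here I should note that the $L^2$ inner product on $\Gamma^\infty(\mathcal H^q\otimes F_\chi)$ corresponds, as remarked just before \eqref{E:Delta.rho}, to the tensor product of the standard $L^2$ inner product on the induced representation and the inner product on $H^q(\goe)$ induced by $g$; hence the formal adjoints $D_q^*$, and therefore the Rumin--Seshadri operators $\Delta_q$ of \eqref{E:Delta.def}, also decompose as $\Delta_q=\bigoplus_\rho m(\rho)\cdot\Delta_{\rho,q}$ with $\Delta_{\rho,q}$ as in \eqref{E:Delta.rho}.

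Next I would take complex powers and traces. Since $\Delta_q$ is a direct sum of the $\Delta_{\rho,q}$ with finite multiplicities $m(\rho)$, and each $\Delta_{\rho,q}$ has discrete spectrum (with trivial kernel for nontrivial $\rho$), for $\Re s>10/2\kappa$ the operator $\Delta_q^{-s}$ is trace class and $\tr\Delta_q^{-s}=\sum_\rho m(\rho)\tr\Delta_{\rho,q}^{-s}$, the rearrangement being justified by absolute convergence (all eigenvalues are positive and the total trace converges in this half-plane by the analytic properties recalled after \eqref{E:Delta.def}). Forming the graded sum $\sum_q(-1)^qN_q$ and using \eqref{E:zeta.M.def} and \eqref{E:zeta.rho.def}, this yields $\zeta_{\Gamma\setminus G,\mathcal D_{\Gamma\setminus G},F_\chi,g_{\Gamma\setminus G},h_\chi}(s)=\sum_\rho m(\rho)\zeta_\rho(s)$ for $\Re s>10/2\kappa$, which is exactly \eqref{E:zzz}. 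Finally I would reorganize this sum according to the three types of irreducible representations of $G$: the scalar representations $\rho_{\alpha,\beta}$ contribute, when $\chi|_{\Gamma\cap[G,G]}=1$, the terms indexed by $(\alpha,\beta)\in(a,b)+\Z^2$ each with multiplicity one (Lemma~\ref{L:Richardson:235}(I)), giving $\zeta_{\I,\Gamma,\chi}(s)$ as in \eqref{E:zeta.I}; the Schr\"odinger representations $\rho_\hbar$ contribute, when $\chi|_{\Gamma\cap Z}=1$, the terms indexed by $\hbar\in c+r\Z$ with multiplicity $|\hbar|$ (Lemma~\ref{L:Richardson:235}(II)), giving $\zeta_{\II,\Gamma,\chi}(s)$ as in \eqref{E:zeta.II}; and the generic representations $\rho_{\lambda,\mu,\nu}$ contribute the terms indexed by $(\lambda,\mu)\in(\lambda_0,\mu_0)+(\Gamma'')^*$ and $\nu=\nu_0+nr$, $n\in\Z$, with multiplicity $m(\tfrac dr,r,w,n)$ (Lemma~\ref{L:Richardson:235}(III), rewritten via \eqref{E:mult}), giving $\zeta_{\III,\Gamma,\chi}(s)$ as in \eqref{E:zeta.III}. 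In all three cases the trivial representation contributes $\zeta_\rho(s)=0$, consistent with omitting the index-zero summand; summing the three pieces gives \eqref{E:zeta.deco}.

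The only genuinely delicate point is the convergence and rearrangement of the countable sum over $\rho$: one must check that for $\Re s>10/2\kappa$ the series $\sum_\rho m(\rho)\tr\Delta_{\rho,q}^{-s}$ converges absolutely so that grouping the representations by type and re-summing is legitimate. This follows because the full operator $\Delta_q$ on $\Gamma\setminus G$ is a Rockland operator of Heisenberg order $2\kappa$ on a closed filtered manifold of homogeneous dimension ten, so $\Delta_q^{-s}$ is trace class for $\Re s>10/2\kappa$ (as recalled after \eqref{E:Delta.def}, citing \cite[Corollary~2]{DH20}); since $\Delta_q=\bigoplus_\rho m(\rho)\Delta_{\rho,q}$ with all eigenvalues positive, its trace is literally the (absolutely convergent, since all terms are nonnegative) sum of the traces of the summands. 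Everything else is a bookkeeping exercise matching the multiplicity formulas of Lemma~\ref{L:Richardson:235} with the index sets in \eqref{E:zeta.I}, \eqref{E:zeta.II}, \eqref{E:zeta.III}, together with the translation between the $\sharp$-formula \eqref{E:mlmn} and the counting function \eqref{E:mult}.
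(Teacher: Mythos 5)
Your proposal is correct and follows essentially the same route as the paper: decompose the twisted Rumin complex via \eqref{E:rep.deco} into $\bigoplus_\rho m(\rho)\cdot\rho(D_*)$, deduce $\Delta_q=\bigoplus_\rho m(\rho)\cdot\Delta_{\rho,q}$, sum the traces for $\Re s>10/2\kappa$ (convergence via \cite[Corollary~2]{DH20}), drop the trivial representation, and regroup by representation type using Lemma~\ref{L:Richardson:235}. Your extra remarks on the compatibility of the $L^2$ inner products and on absolute convergence merely spell out what the paper leaves implicit.
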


	\begin{proof}
		Via the identification in \eqref{E:HqF.G} the decomposition in \eqref{E:rep.deco} provides a decomposition of the twisted Rumin complex $D_*$ in \eqref{E:DqG} into a countable orthogonal direct sum of complexes,
		\begin{equation}\label{E:comp.deco}
			D_*
			=\bigoplus_\rho m(\rho)\cdot\rho(D_*),
		\end{equation}
		where the sum is over all irreducible unitary representations $\rho$ of $G$ and the multiplicities are given in Lemma~\ref{L:Richardson:235}.
		By \eqref{E:Delta.def} and \eqref{E:Delta.rho}, and since $\rho(\Delta_q)=\Delta_{\rho,q}$, this yields
		\[
			\Delta_q
			=\bigoplus_\rho m(\rho)\cdot\Delta_{\rho,q}.
		\]
		Using \eqref{E:zeta.M.def} and \eqref{E:zeta.rho.def}, we get
		\[
			\zeta_{\Gamma\setminus G,\mathcal D_{\Gamma\setminus G},F_\chi,g_{\Gamma\setminus G},h_\chi}(s)
			=\sideset{}{'}\sum_\rho m(\rho)\cdot\zeta_\rho(s).
		\]
		Both sides converge for $\Re s>10/2\kappa$ in view of \cite[Corollary~2]{DH20}.
		We omit the trivial representation $\rho$ on the right hand side because $\zeta_\rho(s)$ vanishes identically for this $\rho$.
		Combining this with the description of the multiplicities in Lemma~\ref{L:Richardson:235} and the definitions in \eqref{E:zeta.I} -- \eqref{E:zeta.III}, we obtain the lemma.
	\end{proof}

	Subsequently, we will analyze each of the three summands in \eqref{E:zeta.deco} individually.

	\begin{remark}
		Let us specialize to the simple lattice $\Gamma=\Gamma_0$ generated by $X_1$ and $X_2$.
		In this case $r=1$, $u=v=1$, $e=f=g=h=0$, $c=0$, and $w=0$.
		Hence,
		\[
			\zeta_{\I,\Gamma_0,\chi}(s)
			=\sideset{}{'}\sum_{(\alpha,\beta)\in(a,b)+\Z^2}\zeta_{\rho_{\alpha,\beta}}(s),\qquad
			\zeta_{\II,\Gamma_0,\chi}(s)
			=\sideset{}{'}\sum_{\hbar\in\Z}|\hbar|\zeta_{\rho_\hbar}(s)
		\]
		and
		\[
			\zeta_{\III,\Gamma_0,\chi}(s)
			:=\sideset{}{'}\sum_{(\lambda,\mu)\in\Z^2}\,\,\,\sum_{n\in\Z}\,\,\,m(d,n)\zeta_{\rho_{\lambda,\mu,\nu_0+n}}(s)
		\]
		where $d=\gcd(\lambda,\mu)$,
		\begin{equation}\label{E:mdn}
			m(d,n)
			:=m(d,1,0,n)
			=\sharp\Bigl\{k\in\mathbb Z/d\mathbb Z\Bigm|k(k+d)\equiv n\mod2d\mathbb Z\Bigr\},
		\end{equation}
		and $\nu_0$ as in \eqref{E:nu0.def} with $w=0$.
		For this lattice $\Gamma_0\cap[G,G]=[\Gamma_0,\Gamma_0]$ by Lemma~\ref{L:Gamma}, $\Gamma_0/[\Gamma_0,\Gamma_0]\cong\Z^2$ by \eqref{E:Gamma.ab}, and $\hom(\Gamma_0,U(1))\cong U(1)\times U(1)$ according to \eqref{E:Gamma*}.
	\end{remark}

\section{Evaluation of the zeta function}\label{S:zeta.eval}

	We continue to use the notation set up in the preceding section and Lemma~\ref{L:Richardson:235}.
	In particular, $\Gamma$ denotes the lattice in $G$ spanned by the exponentials of $\tilde\gamma_i$ as in \eqref{E:Gamma.tgen}, $\chi\colon\Gamma\to U(1)$ is a unitary character, and $g$ is a graded Euclidean inner product on $\goe$ such that $X_1,\dotsc,X_5$ is an orthonormal basis of $\goe$.
	This implies, that every graded automorphism $\phi\in\Aut_\grr(\goe)=\GL(\goe_{-1})\cong\GL_2(\R)$ that preserves $g|_{\goe_{-1}}$ also preserves $g$.
	By naturality of the Rumin differentials, $\phi\cdot D_q=D_q$ where the dot denotes the natural left action of $\Aut_\grr(\goe)$ on $\mathcal U(\goe)\otimes L\bigl(H^q(\goe),H^{q+1}(\goe)\bigr)$, cf.~\eqref{E:Dq.Ug}.
	Hence, if $\rho$ is an irreducible unitary representation of $G$, then
	\begin{equation}\label{E:zeta.phi}
		\zeta_{\rho\circ\phi}(s)=\zeta_\rho(s)
	\end{equation}
	for every graded automorphism $\phi\in\Aut_\grr(\goe)$ preserving $g|_{\goe_{-1}}$.
	Moreover, for $\tau>0$ we have
	\begin{equation}\label{E:zeta.phit}
		\zeta_{\rho\circ\phi_\tau}(s)=\tau^{2\kappa s}\zeta_{\rho}(s)
	\end{equation}
	where $\phi_\tau\in\Aut_\grr(\goe)$ denotes the grading automorphism acting by $\tau^j$ on $\goe_j$.
	For more details we refer to \cite[Section~3.1]{H23}.

	\begin{lemma}\label{L:zeta.I}
		If $\chi|_{\Gamma\cap[G,G]}=1$, then the sum in \eqref{E:zeta.I} converges for $\Re s>1/\kappa$, and
		\begin{equation}\label{E:factor.I}
			\zeta_{\I,\Gamma,\chi}(s)
			=Z_\Epst(2\kappa s;a,b)\cdot\zeta_{\rho_{1,0}}(s)
		\end{equation}
		where 
		\[
			Z_\Epst(s;a,b):=\sideset{}{'}\sum_{(\alpha,\beta)\in(a,b)+\Z^2}\bigl(\alpha^2+\beta^2\bigr)^{-s/2}
		\] 
		denotes an Epstein zeta function.
		In particular, $\zeta_{\I,\Gamma,\chi}(s)$ extends to a meromorphic function on the entire complex plane which has a single (simple) pole at $s=1/\kappa$ with residue $\frac\pi\kappa\cdot\zeta_{\rho_{1,0}}(1/\kappa)$.
		Moreover, for all unitary characters $\chi$,
		\begin{equation}\label{E:zeta.I.0}
			\zeta_{\I,\Gamma,\chi}(0)=0,
			\qquad
			\zeta_{\I,\Gamma,\chi}'(0)
			=\begin{cases}0&\text{if $\chi$ is nontrivial, and}\\2\kappa\log2&\text{if $\chi$ is trivial.}\end{cases}
		\end{equation}
	\end{lemma}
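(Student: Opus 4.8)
The plan is to collapse the series in \eqref{E:zeta.I} onto the single zeta function $\zeta_{\rho_{1,0}}(s)$ attached to one scalar representation, and to read off every analytic feature from the resulting Epstein zeta factor. First I would exploit the transformation rules \eqref{E:zeta.phi} and \eqref{E:zeta.phit}. An orthogonal transformation $\phi$ of $(\goe_{-1},g|_{\goe_{-1}})$ extends to a graded automorphism of $\goe$ preserving $g$ and carries $\rho_{\alpha,\beta}$ to $\rho_{\alpha',\beta'}$ with $(\alpha')^2+(\beta')^2=\alpha^2+\beta^2$, while the grading automorphism $\phi_\tau$ acts by $\tau^{-1}$ on $\goe_{-1}$ and hence carries $\rho_{t,0}$ to $\rho_{t/\tau,0}$. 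Together these give
\[
\zeta_{\rho_{\alpha,\beta}}(s)=(\alpha^2+\beta^2)^{-\kappa s}\,\zeta_{\rho_{1,0}}(s),\qquad (\alpha,\beta)\neq(0,0),
\]
an identity of entire functions: since $\rho_{1,0}$ acts on $\C$, each $\Delta_{\rho_{1,0},q}$ in \eqref{E:Delta.rho} is a finite positive semidefinite matrix on $H^q(\goe)$, so $\zeta_{\rho_{1,0}}(s)$ is a finite sum of terms $c\,\lambda^{-s}$ with $\lambda>0$. Summing over $(\alpha,\beta)\in(a,b)+\Z^2$ and pulling out the common factor $\zeta_{\rho_{1,0}}(s)$ yields \eqref{E:factor.I}, and the remaining lattice sum converges absolutely exactly for $\Re s>1/\kappa$; this is the convergence statement.

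The meromorphic continuation and the pole are then inherited from the classical properties of the two--dimensional (shifted) Epstein zeta function: $Z_\Epst(s;a,b)$ extends meromorphically to $\C$ with a single simple pole at $s=2$ of residue $2\pi$ (independent of the shift), is holomorphic at $s=0$, and $Z_\Epst(0;a,b)=-1$ if $(a,b)\in\Z^2$ and $Z_\Epst(0;a,b)=0$ otherwise --- the last point being immediate from the Mellin--theta representation, in which the $t\to\infty$ behaviour, hence the value at $s=0$, is governed by whether the shifted lattice contains the origin. Replacing $s$ by $2\kappa s$ moves the pole to $s=1/\kappa$ with residue $\pi/\kappa$; multiplication by the entire function $\zeta_{\rho_{1,0}}(s)$ then gives the stated continuation of $\zeta_{\I,\Gamma,\chi}(s)$, with a single simple pole at $s=1/\kappa$ of residue $\frac\pi\kappa\,\zeta_{\rho_{1,0}}(1/\kappa)$.

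For \eqref{E:zeta.I.0} I would differentiate \eqref{E:factor.I} at $s=0$. As $\rho_{1,0}$ is nontrivial, the Rockland property makes $\rho_{1,0}(D_*)$ acyclic, so $\zeta_{\rho_{1,0}}(0)=\sum_q(-1)^qN_q\dim H^q(\goe)$, which vanishes because $N_q+N_{5-q}=10$ and $\dim H^q(\goe)=\dim H^{5-q}(\goe)$ by Poincar\'e duality. Hence $\zeta_{\I,\Gamma,\chi}(0)=Z_\Epst(0;a,b)\cdot0=0$ and $\zeta'_{\I,\Gamma,\chi}(0)=Z_\Epst(0;a,b)\cdot\zeta'_{\rho_{1,0}}(0)$. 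If $\chi$ is nontrivial then either $\chi|_{\Gamma\cap[G,G]}\neq1$, so $\zeta_{\I,\Gamma,\chi}\equiv0$, or $\chi|_{\Gamma\cap[G,G]}=1$ while $\chi(\gamma_1),\chi(\gamma_2)$ are not both $1$, i.e.\ $(a,b)\notin\Z^2$ and $Z_\Epst(0;a,b)=0$; either way $\zeta'_{\I,\Gamma,\chi}(0)=0$. If $\chi$ is trivial then $(a,b)\in\Z^2$, $Z_\Epst(0;a,b)=-1$, and $\zeta'_{\rho_{1,0}}(0)=-2\kappa\log2$ by \cite{H23}, giving $\zeta'_{\I,\Gamma,\chi}(0)=2\kappa\log2$. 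I expect the only genuinely delicate point to be pinning down $\zeta'_{\rho_{1,0}}(0)=-2\kappa\log2$ together with the bookkeeping of the factor $2\kappa$ across $s\mapsto 2\kappa s$: without quoting \cite{H23} one diagonalises the six matrices $\Delta_{\rho_{1,0},q}$ --- all of whose nonzero eigenvalues equal $(2\pi)^{2\kappa}$, except that $\Delta_{\rho_{1,0},2}$ and $\Delta_{\rho_{1,0},3}$ each have two eigenvalues $2^{-\kappa/2}(2\pi)^{2\kappa}$ coming from the $1/\sqrt2$ normalisations in $D_2$ --- and checks that $-\sum_q(-1)^qN_q\log{\det}'\Delta_{\rho_{1,0},q}$ collapses, once the $\log(2\pi)$ terms cancel, to $-2\kappa\log2$.
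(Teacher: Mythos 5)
Your proposal is correct and follows essentially the same route as the paper: rotation invariance \eqref{E:zeta.phi} together with the homogeneity \eqref{E:zeta.phit} give $\zeta_{\rho_{\alpha,\beta}}(s)=(\alpha^2+\beta^2)^{-\kappa s}\zeta_{\rho_{1,0}}(s)$, the classical properties of the shifted Epstein zeta function supply convergence, continuation, the pole at $s=1/\kappa$ and the value at $0$, and the facts $\zeta_{\rho_{1,0}}(0)=0$, $\zeta'_{\rho_{1,0}}(0)=-2\kappa\log2$ (the latter from \cite[Theorem~2]{H23}, or equivalently from the spectra you list, which agree with the paper's later remark yielding $\zeta_{\rho_{1,0}}(s)=(2\pi)^{-2\kappa s}\,4\bigl(1-2^{\kappa s/2}\bigr)$) finish the computation exactly as in the paper. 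One small caveat: your stated reason for $\sum_{q}(-1)^qN_q\dim H^q(\goe)=0$, namely $N_q+N_{5-q}=10$ together with $\dim H^q(\goe)=\dim H^{5-q}(\goe)$, does not by itself force the vanishing (e.g.\ $N_q=0,2,4,6,8,10$ satisfies both relations yet gives $-4$); the identity is nonetheless true and should simply be verified with the actual values $N_q=0,1,4,6,9,10$ and $\dim H^q(\goe)=1,2,3,3,2,1$, which give $0-2+12-18+18-10=0$.
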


	\begin{proof}
		W.l.o.g.\ $\chi|_{\Gamma\cap[G,G]}=1$.
		The homogeneity in \eqref{E:zeta.phit} gives
		\[
			\zeta_{\rho_{\tau\alpha,\tau\beta}}(s)=\tau^{-2\kappa s}\zeta_{\rho_{\alpha,\beta}}(s),\qquad\tau>0.
		\]
		For $(\alpha,\beta)\neq(0,0)$ we may apply \eqref{E:zeta.phi} with a graded automorphism $\phi\in\Aut_\grr(\goe)$ that acts as a rotation on $\goe_{-1}$, and then use the latter homogeneity to obtain
		\[
			\zeta_{\rho_{\alpha,\beta}}(s)=(\alpha^2+\beta^2)^{-\kappa s}\cdot\zeta_{\rho_{1,0}}(s).
		\]
		Summing over all $(\alpha,\beta)$ in the shifted lattice, we obtain \eqref{E:factor.I}, cf.~\eqref{E:zeta.I}.

		As the scalar representations $\rho_{\alpha,\beta}$ are one dimensional, $\zeta_{\rho_{\alpha,\beta}}(s)$ is an entire function, and \eqref{E:zeta.rho.def} yields
		\begin{equation}\label{E:qwerty97}
			\zeta_{\rho_{\alpha,\beta}}(0)=\str(N)=\sum_{q=0}^5(-1)^qN_q\dim H^q(\goe)=0.
		\end{equation}
		By \cite[Theorem~2]{H23},
		\begin{equation}\label{E:qwerty98}
			\exp\left(\tfrac1{2\kappa}\zeta_{\rho_{\alpha,\beta}}'(0)\right)
			=\frac12.
		\end{equation}

		The Epstein \cite[p.~627]{E03} zeta function $Z_\Epst(s;a,b)$ converges for $s>2$ and extends to a meromorphic on the entire complex plane which has a single (simple) pole at $s=2$ with residue $2\pi$, and 
		\begin{equation}\label{E:qwerty99}
			Z_\Epst(0;a,b)=
			\begin{cases}
				-1&\text{if $a,b\in\Z$, and}\\
				0&\text{otherwise.}
			\end{cases}
		\end{equation}
		Note that $a,b\in\Z$ if and only if $\chi$ is trivial, cf.~Lemma~\ref{L:Richardson:235}.
		Combining this with \eqref{E:factor.I} we see that the sum in \eqref{E:zeta.I} converges for $\Re s>1/\kappa$, and that $\zeta_{\I,\Gamma,\chi}(s)$ extends to a meromorphic function on the entire complex plane which has a single (simple) pole at $s=1/\kappa$ of residue $\frac{2\pi}{2\kappa}\cdot\zeta_{\rho_{1,0}}(1/\kappa)$.
		Furthermore, combining \eqref{E:qwerty97}--\eqref{E:qwerty99} with \eqref{E:factor.I} we obtain \eqref{E:zeta.I.0}.
	\end{proof}

	\begin{lemma}\label{L:zeta.II}
		If $\chi|_{\Gamma\cap Z}=1$, then the sum in \eqref{E:zeta.II} converges for $\Re s>2/\kappa$ and
		\begin{equation}\label{E:factor.II}
			\zeta_{\II,\Gamma,\chi}(s)
			=r^{-\kappa s+1}\cdot Z_\Epst(\kappa s-1;\tfrac cr)\cdot\zeta_{\rho_1}(s)
		\end{equation}
		where 
		\[
			Z_\Epst(s;a):=\sideset{}{'}\sum_{n\in\Z}|a+n|^{-s}
		\] 
		denotes an Epstein zeta function.
		In particular, $\zeta_{\II,\Gamma,\chi}(s)$ extends to a meromorphic function on the entire complex plane which has a simple pole at $s=2/\kappa$ with residue $\frac2{r\kappa}\cdot\zeta_{\rho_1}(2/\kappa)$.
		If $c\notin r\Z$, then this is the only pole of $\zeta_{\II,\Gamma,\chi}(s)$.
		If $c\in r\Z$, then $\zeta_{\II,\Gamma,\chi}(s)$ has one further (simple) pole at $s=1/\kappa$ with residue $-\res_{s=1/\kappa}\zeta_{\rho_1}(1/\kappa)$.
		Moreover, for all unitary characters $\chi$,
		\begin{equation}\label{E:zeta.II.0}
			\zeta_{\II,\Gamma,\chi}(0)=0
			\qquad\text{and}\qquad
			\zeta_{\II,\Gamma,\chi}'(0)=0.
		\end{equation}
	\end{lemma}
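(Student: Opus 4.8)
The plan is to mimic the structure of the proof of Lemma~\ref{L:zeta.I}, replacing the two-dimensional lattice sum by the one-dimensional one and using the homogeneity relation \eqref{E:zeta.phit} together with the explicit value of $\zeta_{\rho_\hbar}'(0)$ obtained in \cite{H23}. First I would assume $\chi|_{\Gamma\cap Z}=1$, so that by Lemma~\ref{L:Richardson:235}(II) the Schrödinger representation $\rho_\hbar$ contributes exactly when $\hbar\in c+r\Z$, with multiplicity $|\hbar|$; this is precisely the sum in \eqref{E:zeta.II}. To reduce to $\rho_1$, I would apply \eqref{E:zeta.phit} with the grading automorphism $\phi_\tau$: since $\rho_\hbar\circ\phi_\tau$ is (equivalent to) $\rho_{\tau^2\hbar}$ — indeed $\phi_\tau$ scales $X_3\in\goe_{-2}$ by $\tau^2$, and $\hbar=f(X_3)$ parametrizes $\rho_\hbar$ — we get $\zeta_{\rho_\hbar}(s)=|\hbar|^{-\kappa s}\zeta_{\rho_{\sgn\hbar}}(s)$, and by a reflection automorphism in $\Aut_\grr(\goe)$ preserving $g|_{\goe_{-1}}$ (e.g.\ $X_1\mapsto X_1$, $X_2\mapsto -X_2$, which sends $\rho_\hbar$ to $\rho_{-\hbar}$) together with \eqref{E:zeta.phi} we may drop the sign, $\zeta_{\rho_\hbar}(s)=|\hbar|^{-\kappa s}\zeta_{\rho_1}(s)$. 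Substituting $\hbar=c+rn$, pulling out $|\hbar|=|c+rn|$, and factoring $r$ out of $|c+rn|=r|c/r+n|$ then yields
\[
	\zeta_{\II,\Gamma,\chi}(s)
	=\Bigl(\sideset{}{'}\sum_{n\in\Z}|c+rn|\cdot|c+rn|^{-\kappa s}\Bigr)\zeta_{\rho_1}(s)
	=r^{-\kappa s+1}Z_\Epst(\kappa s-1;\tfrac cr)\,\zeta_{\rho_1}(s),
\]
which is \eqref{E:factor.II}, and the sum converges for $\Re(\kappa s-1)>1$, i.e.\ $\Re s>2/\kappa$.

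Next I would assemble the meromorphic continuation and locate the poles. The one-dimensional Epstein/Hurwitz-type zeta $Z_\Epst(s;t)$ continues meromorphically with a single simple pole at $s=1$ of residue $2$ (it is essentially $\zeta(s,t)+\zeta(s,1-t)$), and $Z_\Epst(0;t)=-1$ if $t\in\Z$ and $0$ otherwise; here $t=c/r$, and $c/r\in\Z$ is equivalent (cf.\ Lemma~\ref{L:Richardson:235}(II)) to $c\in r\Z$. Meanwhile $\zeta_{\rho_1}(s)$ is meromorphic by \cite[Theorem~1]{H23}, and one must recall from \cite{H23} the location of its poles — for the Schrödinger representation the zeta function has at worst a simple pole coming from the homogeneous dimension of the Heisenberg group (which is four), landing at $s=2/\kappa$ after the $\Delta$-rescaling. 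The pole of $Z_\Epst(\kappa s-1;c/r)$ at $\kappa s-1=1$, i.e.\ $s=2/\kappa$, has residue $2/\kappa$, so it contributes a simple pole of $\zeta_{\II,\Gamma,\chi}$ at $s=2/\kappa$ with residue $\frac1\kappa\cdot r^{-1}\cdot 2\cdot\zeta_{\rho_1}(2/\kappa)=\frac2{r\kappa}\zeta_{\rho_1}(2/\kappa)$, unless $\zeta_{\rho_1}$ itself also has a pole there, in which case the two-fold pole must be shown to cancel against the zero of $Z_\Epst$. When $c\notin r\Z$ the Epstein factor is regular at $s=1/\kappa$ and in fact, since $\zeta_{\rho_1}(s)$ is holomorphic at $s=0$ and the factor $r^{-\kappa s+1}Z_\Epst(\kappa s-1;c/r)$ has no further poles, $s=2/\kappa$ is the only pole. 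When $c\in r\Z$ we have $c/r\in\Z$, the Epstein factor $Z_\Epst(\kappa s-1;0)$ picks up the extra behaviour at $\kappa s-1=0$, i.e.\ $s=1/\kappa$: there $Z_\Epst$ is holomorphic with value $-1$, so a pole at $s=1/\kappa$ can only come from $\zeta_{\rho_1}$, giving residue $-\res_{s=1/\kappa}\zeta_{\rho_1}(s)$ as claimed (with the factor $r^{-\kappa/\kappa+1}=r^0=1$).

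Finally, the values at $s=0$. Since $\zeta_{\rho_1}(s)$ is holomorphic at $0$ with, by \eqref{E:zeta.rho.def} and the fact that $\Delta_{\rho_1,q}$ is invertible (Rockland), $\zeta_{\rho_1}(0)=\str(N\Delta_{\rho_1}^0)-(\text{contribution of }\ker)$; more usefully, \cite[Theorem~2]{H23} records $\zeta_{\rho_1}(0)$ and $\zeta_{\rho_1}'(0)$ explicitly — in the Schrödinger case the analytic torsion of $\rho_1(D_*)$ is trivial, so $\zeta_{\rho_1}'(0)=0$, and likewise $\zeta_{\rho_1}(0)=0$. Then from \eqref{E:factor.II}, writing $F(s):=r^{-\kappa s+1}Z_\Epst(\kappa s-1;c/r)$, a function holomorphic near $s=0$ (as noted, its only pole is at $s=2/\kappa$), we get $\zeta_{\II,\Gamma,\chi}(0)=F(0)\zeta_{\rho_1}(0)=0$ and $\zeta_{\II,\Gamma,\chi}'(0)=F'(0)\zeta_{\rho_1}(0)+F(0)\zeta_{\rho_1}'(0)=0$. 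I expect the main obstacle to be the bookkeeping at $s=2/\kappa$ and $s=1/\kappa$: one must pin down from \cite{H23} exactly where $\zeta_{\rho_1}(s)$ has poles and with what residues, and verify that the candidate double pole at $s=2/\kappa$ (Epstein pole times possible $\zeta_{\rho_1}$ pole) does not actually occur — equivalently that $\zeta_{\rho_1}(s)$ is regular at $s=2/\kappa$, which should follow from the dimensional/order considerations in \cite{H23} since $2/\kappa$ is below the abscissa of the leading pole of the Schrödinger zeta function. Everything else is a direct transcription of the argument in Lemma~\ref{L:zeta.I}.
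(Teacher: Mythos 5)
Your derivation of \eqref{E:factor.II} --- homogeneity \eqref{E:zeta.phit} for the grading automorphism plus a reflection in $\Aut_\grr(\goe)$ to get $\zeta_{\rho_\hbar}(s)=|\hbar|^{-\kappa s}\zeta_{\rho_1}(s)$, then factoring the shifted one-dimensional lattice sum into $r^{1-\kappa s}Z_\Epst(\kappa s-1;\tfrac cr)$ --- and your evaluation at $s=0$ via $\zeta_{\rho_1}(0)=\zeta_{\rho_1}'(0)=0$ (an input from \cite{H23}; the paper cites Proposition~1 and Theorem~3 there) coincide with the paper's proof.

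The gap is in the pole bookkeeping, which is part of the statement. You guess that $\zeta_{\rho_1}(s)$ has a (possible) leading pole at $s=2/\kappa$ ``coming from the homogeneous dimension of the Heisenberg group'', and your fallback for a putative double pole at $s=2/\kappa$ --- that it ``cancels against the zero of $Z_\Epst$'' --- cannot work, since $Z_\Epst(\kappa s-1;\tfrac cr)$ has a pole, not a zero, there. The fact the paper imports from \cite[Theorem~6, Eqs.~(200) and (202)]{H23} is that the poles of $\zeta_{\rho_1}(s)$ are confined to $s=(1-2j)/\kappa$, $j\in\N_0$; in particular $\zeta_{\rho_1}$ is regular at $s=2/\kappa$, which is what makes the residue formula there meaningful. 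The same localization is needed for the ``only pole''/``one further pole'' claims, and your argument in the case $c\notin r\Z$ is incomplete as written: mere regularity of the Epstein factor at $s=1/\kappa$ does not prevent poles of $\zeta_{\rho_1}$ at $s=(1-2j)/\kappa$ from surviving in the product. One needs the Epstein factor to \emph{vanish} at those points, i.e.\ $Z_\Epst(0;a)=0$ for $a\notin\Z$ (killing the pole at $s=1/\kappa$) together with the trivial zeros of $Z_\Epst$ at $-2\N$ (killing the poles for $j\geq1$), neither of which you invoke for this purpose. With the pole localization from \cite{H23} and these zeros in hand, the remainder of your argument, including the residue $-\res_{s=1/\kappa}\zeta_{\rho_1}(s)$ when $c\in r\Z$, goes through exactly as in the paper.
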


	\begin{proof}
		W.l.o.g.\ $\chi|_{\Gamma\cap Z}=1$.
		The homogeneity in \eqref{E:zeta.phit} gives
		\[
			\zeta_{\rho_{\tau^2\hbar}}(s)=\tau^{-2\kappa s}\zeta_{\rho_\hbar}(s),\qquad\tau>0.
		\]
		Applying \eqref{E:zeta.phi} to a graded automorphism $\phi\in\Aut_\grr(\goe)$ that acts as an isometric reflection on $\goe_{-1}$, we obtain $\zeta_{\rho_{-\hbar}}(s)=\zeta_{\rho_\hbar}(s)$.
		Hence,
		\[
			\zeta_{\rho_\hbar}(s)
			=|\hbar|^{-\kappa s}\zeta_{\rho_1}(s)
		\]
		for all $0\neq\hbar\in\R$.
		This immediately yields \eqref{E:factor.II}, cf.~\eqref{E:zeta.II}.
		In view of \cite[Theorem~6, Eqs.~(200) and (202)]{H23} the function $\zeta_{\rho_1}(s)$ is meromorphic on the entire complex plane and its poles can only be located at $s=\frac{1-2j}\kappa$, $j\in\N_0$.
		
		The Epstein \cite[p.~620]{E03} zeta function $Z_\Epst(s;a)$ converges for $\Re s>1$, it extends to a meromorphic function on the entire complex plane which has a single (simple) pole at $s=1$ of residue $2$, it vanishes at $s\in-2\N$, and it satisfies
		\begin{equation}\label{E:qwerty999}
			Z_\Epst(0;a)=
			\begin{cases}
				-1&\text{if $a\in\Z$, and}\\
				0&\text{otherwise.}
			\end{cases}
		\end{equation}
		Combining this with \eqref{E:factor.II} we see that the sum in \eqref{E:zeta.II} converges for $\Re s>2/\kappa$ and that $\zeta_{\II,\Gamma,\chi}(s)$ extends to a meromorphic function on the entire complex plane which has a simple pole at $s=2/\kappa$ with residue $\frac1r\cdot\frac2\kappa\cdot\zeta_{\rho_1}(2/\kappa)$.
		If $c\notin r\Z$ then the zeros of $Z_\Epst(\kappa s-1;\frac cr)$ cancel all the poles of $\zeta_{\rho_1}(s)$ and $\zeta_{\II,\Gamma,\chi}(s)$ is holomorphic for $s\neq2/\kappa$.
		If $c\in r\Z$ then all but one pole get canceled and $\zeta_{\II,\Gamma,\chi}(s)$ has one further pole at $s=1/\kappa$ with residue $-\res_{s=1/\kappa}\zeta_{\rho_1}(s)$.

		From \cite[Proposition~1, Theorem~3]{H23} we obtain
		\begin{equation}\label{E:qwerty0}
			\zeta_{\rho_\hbar}(0)=0
			\qquad\text{and}\qquad
			\zeta_{\rho_\hbar}'(0)=0.
		\end{equation}
		Combining this with \eqref{E:factor.II} and the fact that $\zeta_\Epst(s;a)$ is holomorphic at $s=-1$ we obtain \eqref{E:zeta.II.0}.
	\end{proof}

	\begin{remark}\label{R:Epst.Hurw}
		Recall that
		\[
			Z_\Epst(s;a)
			=\begin{cases}
				\zeta_\Hurw(s,a)+\zeta_\Hurw(s,1-a)&\text{if $0<a<1$, and}\\
				2\zeta_\Riem(s)&\text{if $a=0$.}
			\end{cases}
		\]
		Here $\zeta_\Hurw(s,a)=\sum_{n=0}^\infty(n+a)^{-s}$ denotes the Hurwitz zeta function, $a>0$, and $\zeta_\Riem(s)=\zeta_\Hurw(s,1)=\sum_{n=1}^\infty n^{-s}$ denotes the Riemann zeta function.
		In particular, for $0\leq a<1$,
		\[
			\zeta_\Epst(-1;a)=-B_2(a),
		\]
		where $B_2(a)=a^2-a+\frac16$ denotes the second Bernoulli polynomial.
		Indeed, this follows from the classical identities $\zeta_\Hurw(-1,a)=-\tfrac12B_2(a)$, $B_2(1-a)=B_2(a)$, and $\zeta_\Riem(-1)=-\frac1{12}=-\tfrac12B_2(0)$.
	\end{remark}

	\begin{lemma}\label{L:zeta.semi}
		Suppose $(0,0)\neq(\lambda,\mu)\in\R^2$, $\nu_0\in\R$, and $0\neq d\in\R$.
		Then 
		\[
			\sum_{n\in\Z}\zeta_{\rho_{\lambda,\mu,\nu_0+2dn}}(s)
		\] 
		converges for $\Re s>2/\kappa$, and this function admits a meromorphic continuation to the half plane\footnote{Below we will see that this function in fact extends to a meromorphic function on the entire complex plane whose poles can only be located at $s=(2-3j)/\kappa$, $j\in\N_0$, cf.~\eqref{E:zeta.k}.} $\Re s>-1/8\kappa$ which is holomorphic at $s=0$.
		Moreover,
		\[
			\Bigl(\sum_{n\in\Z}\zeta_{\rho_{\lambda,\mu,\nu_0+2dn}}\Bigr)(0)=0
			\qquad\text{and}\qquad
			\Bigl(\sum_{n\in\Z}\zeta_{\rho_{\lambda,\mu,\nu_0+2dn}}\Bigr)'(0)=0.
		\]
	\end{lemma}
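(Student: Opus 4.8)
The plan is to reduce the sum over $n$ to a one-dimensional Epstein/Hurwitz-type zeta function composed with the single zeta function $\zeta_{\rho_{\lambda,\mu,\nu}}(s)$, exactly as was done for types I and II in Lemmas~\ref{L:zeta.I} and~\ref{L:zeta.II}. First I would use the scaling homogeneity \eqref{E:zeta.phit} together with the invariance \eqref{E:zeta.phi}: applying a graded automorphism $\phi\in\Aut_\grr(\goe)$ acting by a rotation on $\goe_{-1}$ we may normalize $(\lambda,\mu)$, and since the grading automorphism $\phi_\tau$ rescales $\lambda,\mu$ by $\tau^3$ and $\nu$ by $\tau^6$ (recall $\rho'_{\lambda,\mu,\nu}(X_3X_3+2X_1X_5-2X_2X_4)=(2\pi)^2\nu$, so $\nu$ carries Heisenberg weight $6$), we get a homogeneity relation expressing $\zeta_{\rho_{\lambda,\mu,\nu}}(s)$ in terms of $\zeta_{\rho_{\lambda,\mu,\nu'}}(s)$ after rescaling. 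The upshot is that, for fixed nonzero $(\lambda,\mu)$ and fixed spacing $2d$, the function $\zeta_{\rho_{\lambda,\mu,\nu_0+2dn}}(s)$ depends on $n$ only through an explicit power of a quantity asymptotically proportional to $|n|$. This is where I would invoke the precise asymptotic behavior of $\zeta_{\rho_{\lambda,\mu,\nu}}(s)$ as $\nu\to\pm\infty$ established in \cite{H23} (the same input that underlies the footnote's claim about poles at $s=(2-3j)/\kappa$): it tells us both that the $n$-sum converges for $\Re s>2/\kappa$ and gives the leading term controlling the meromorphic continuation.

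The key steps, in order, are: (1) use \eqref{E:zeta.phi}--\eqref{E:zeta.phit} to reduce to a normalized $(\lambda,\mu)$ and to extract the $\nu$-dependence as an explicit homogeneity; (2) split the sum $\sum_{n\in\Z}$ into a "main part" given by a Hurwitz-type zeta function $\sum_n |\nu_0+2dn|^{-\text{(something)}s}$ times $\zeta_{\rho_{\lambda,\mu,\pm1}}(s)$, plus a remainder that is absolutely convergent and holomorphic in a larger half-plane because the correction to pure homogeneity decays; (3) read off the domain of convergence $\Re s>2/\kappa$ and the meromorphic continuation to $\Re s>-1/8\kappa$ from the known analytic structure of the Hurwitz zeta function (poles only at its argument equal to $1$) together with the pole locations of $\zeta_{\rho_{\lambda,\mu,\nu}}(s)$; (4) evaluate at $s=0$. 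For the last step, the value $\zeta_{\rho_{\lambda,\mu,\nu}}(0)=\str(N)=\sum_q(-1)^qN_q\dim H^q(\goe)=0$ holds for each individual generic representation by the general argument already used in \eqref{E:qwerty97} (the Rockland property makes each $\Delta_{\rho,q}$ invertible, so the constant term of each heat trace is the local term, which cancels in the alternating sum by the same cohomological computation); hence the regularized $n$-sum also vanishes at $s=0$, provided one checks the continuation is holomorphic there — which it is, since the only pole of the Hurwitz factor would sit at a nonzero value of $s$. For the derivative at $s=0$, I would combine $\zeta_{\rho_{\lambda,\mu,\nu}}'(0)=2\kappa\log(\text{something})$ from the single-representation result \cite{H23} (the analogue of \eqref{E:qwerty98} and \eqref{E:qwerty0} for generic representations — presumably giving derivative zero, matching the footnote and the acyclicity expected from the main theorem) with the fact that the Hurwitz factor is holomorphic and its known value at the relevant negative point contributes nothing extra; summing and using the regularization one gets $0$.

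The main obstacle I expect is Step (2)--(3): the $\nu$-dependence of $\zeta_{\rho_{\lambda,\mu,\nu}}(s)$ is genuinely a function, not a pure power of $\nu$, so the reduction to a Hurwitz zeta function is only asymptotic and one must control the error term uniformly in $s$ on the relevant half-plane in order to (a) justify interchanging the $n$-sum with analytic continuation and (b) pin down that the meromorphic continuation reaches $\Re s>-1/8\kappa$ with no pole at $s=0$. This is precisely the kind of delicate estimate the paper attributes to "further properties of $\zeta_\rho(s)$ obtained in \cite{H23}," so I would lean heavily on the explicit formulas there (in particular the representation of $\zeta_{\rho_{\lambda,\mu,\nu}}(s)$ via a Mellin transform of a heat-type trace, whose small-time expansion has the coefficients governing the poles at $s=(2-3j)/\kappa$). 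A secondary subtlety is bookkeeping the effect of using only the spacing $2d$ rather than $d$ — but since the lemma is stated with spacing $2d$ this is automatic, and it is exactly the form in which \eqref{E:mlmn} and \eqref{E:zeta.III} will feed it in later.
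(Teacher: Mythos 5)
Your overall strategy is the paper's: normalize with the scaling homogeneity, replace $\zeta_{\rho_{\lambda,\mu,\nu}}(s)$ by its large-$|\nu|$ asymptotics from \cite{H23}, sum the power-law main terms into Hurwitz zeta functions, control the remainders uniformly in $s$ on $\Re s\geq-1/8\kappa$, and feed in the single-representation vanishing $\zeta_{\rho_{\lambda,\mu,\nu}}(0)=\zeta'_{\rho_{\lambda,\mu,\nu}}(0)=0$ from \cite{H23}. However, two of your intermediate claims are wrong and one is a genuine gap. First, the scaling weight of $\nu$ is $4$, not $6$: in $X_3X_3+2X_1X_5-2X_2X_4$ each monomial has Heisenberg order $4$, and the correct homogeneity is $\zeta_{\rho_{\tau^3\lambda,\tau^3\mu,\tau^4\nu}}(s)=\tau^{-2\kappa s}\zeta_{\rho_{\lambda,\mu,\nu}}(s)$, cf.~\eqref{E:zeta.phit}. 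More importantly, because scaling moves $(\lambda,\mu)$ as well as $\nu$, homogeneity cannot produce any power-law dependence on $\nu$ for \emph{fixed} $(\lambda,\mu)$; in particular your proposed main term, a single power $|\nu_0+2dn|^{-(\cdot)s}$ times $\zeta_{\rho_{\lambda,\mu,\pm1}}(s)$, is not what the asymptotics give. The expansions in \cite{H23} are two-sided and asymmetric: for $\nu\to-\infty$ there are two powers, $|\nu|^{-\kappa s/2}E(s)$ and $|\nu|^{-2\kappa s+3/2}C_-(s)$, and for $\nu\to+\infty$ a single term $\nu^{-2\kappa s+3/2}C_+(s)$, with remainders $O(|\nu|^{-5/4})$ uniformly on $\Re s\geq-1/8\kappa$ (this is where the exponent $-1/8\kappa$ and the convergence abscissa $2/\kappa$ actually come from).

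The gap this creates is at $s=0$: the coefficient functions $E(s)$, $C_\pm(s)$ are \emph{not} values of $\zeta_\rho$ at normalized parameters, so you cannot quote the single-representation result to conclude that the Hurwitz-times-coefficient main terms vanish to first order at $s=0$ (note $\zeta_\Hurw(0;a)$ and $\zeta_\Hurw(-3/2;a)$ are nonzero, so the Hurwitz factors do not save you). One must first deduce $E(0)=C_\pm(0)=0$ and $E'(0)=C'_\pm(0)=0$ from the pointwise vanishing of $\zeta_{\rho_{\lambda,\mu,\nu}}$ and its derivative at $0$, by letting $|\nu|\to\infty$ in the expansion and separating the distinct powers of $|\nu|$ against the $O(|\nu|^{-5/4})$ remainder; only then do the individual remainders vanish at $0$ with derivative, and the uniformly convergent remainder sum inherits this. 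Relatedly, your justification $\zeta_{\rho_{\lambda,\mu,\nu}}(0)=\str(N)=0$ is only valid for the one-dimensional representations as in \eqref{E:qwerty97}; for the generic (infinite-dimensional) representations this vanishing, and that of the derivative, is a nontrivial result of \cite{H23} and should be cited as such rather than argued by a local heat-coefficient heuristic. With these corrections your sketch becomes the paper's proof.
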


	\begin{proof}
		The homogeneity in \eqref{E:zeta.phit} gives
		\begin{equation}\label{E:zeta.homog}
			\zeta_{\rho_{\tau^3\lambda,\tau^3\mu,\tau^4\nu}}(s)
			=\tau^{-2\kappa s}\zeta_{\rho_{\lambda,\mu,\nu}}(s),\qquad\tau>0.
		\end{equation}
		W.l.o.g.\ we may thus assume $\lambda^2+\mu^2=1$.
		Furthermore, we may assume $d>0$ and $-d\leq\nu_0\leq d$.
		From \cite[Theorems~4 and 5(III)]{H23} we know
		\begin{equation}\label{E:zetap.lmn.0}
			\zeta_{\rho_{\lambda,\mu,\nu}}(0)=0
			\qquad\text{and}\qquad
			\zeta_{\rho_{\lambda,\mu,\nu}}'(0)=0
		\end{equation}
		for all $\nu\in\R$.

		For $\nu\leq-d$ we have
		\begin{equation}\label{E:zeta-}
			\zeta_{\rho_{\lambda,\mu,\nu}}(s)=|\nu|^{-\kappa s/2}E(s)+|\nu|^{-2\kappa s+3/2}C_-(s)+R_{\nu,-}(s)
		\end{equation}
		where $E(s)$, $C_-(s)$ and $R_{\nu,-}(s)$ are meromorphic functions on the entire complex plane 
		whose poles are all contained in the set
		\[
			P:=\left(\left\{\frac{3-j}{4\kappa}:j\in\N_0\right\}\cup\left\{\frac{l/2+1-k}\kappa:k,l\in\N_0\right\}\right)\setminus(-\N_0),
		\]
		and the estimate
		\begin{equation}\label{E:R-.esti}
			R_{\nu,-}(s)=O\left(|\nu|^{-5/4}\right)
		\end{equation}
		holds uniformly on compact subsets of $\{s\in\mathbb C\setminus P:\Re s\geq-1/8\kappa\}$ and for $\nu\leq-d$.
		This follows from the first estimate in \cite[Corollary~1]{H23} by taking, with the notation there, $N=7/4$, $\sigma=-1/8\kappa$, and $\varepsilon=d$.
		Combining \eqref{E:zetap.lmn.0}, \eqref{E:zeta-}, and \eqref{E:R-.esti}, we see that
		\begin{equation}\label{E:ECR-0}
			E(0)=C_-(0)=R_{\nu,-}(0)=0
			\qquad\text{and}\qquad
			E'(0)=C'_-(0)=R'_{\nu,-}(0)=0.
		\end{equation}
		From \eqref{E:zeta-} we obtain
		\begin{multline}\label{E:zeta.cont-}
			\sum_{n=1}^\infty\zeta_{\rho_{\lambda,\mu,\nu_0-2dn}}(s)
			=(2d)^{-\kappa s/2}\zeta_\Hurw\bigl(\tfrac{\kappa s}2;1-\tfrac{\nu_0}{2d}\bigr)E(s)
			\\
			+(2d)^{-2\kappa s+3/2}\zeta_\Hurw\bigl(2\kappa s-\tfrac32;1-\tfrac{\nu_0}{2d}\bigr)C_-(s)
			+\sum_{n=1}^\infty R_{\nu_0-2dn,-}(s)
		\end{multline}
		where the last sum on the right hand side converges uniformly on compact subsets of $\{s\in\mathbb C\setminus P:\Re s\geq-1/8\kappa\}$ by the estimate in \eqref{E:R-.esti}.
		The sums making up the Hurwitz zeta functions converge for $\Re s>2/\kappa$ and $\Re s>5/4\kappa$, respectively.
		Hence, $\sum_{n=1}^\infty\zeta_{\rho_{\lambda,\mu,\nu_0-2dn}}(s)$ converges for $\Re s>2/\kappa$, and the right hand side in \eqref{E:zeta.cont-} provides the meromorphic continuation to $\Re s>-1/8\kappa$.
		Combining this with \eqref{E:ECR-0} and the fact that the Hurwitz zeta function $\zeta_\Hurw(s;a)$ is holomorphic at $s=0$ and at $s=-3/2$, we conclude
		\begin{equation}\label{E:zeta-sum}
			\Bigl(\sum_{n=1}^\infty\zeta_{\rho_{\lambda,\mu,\nu_0-2dn}}\Bigr)(0)=0
			\qquad\text{and}\qquad
			\Bigl(\sum_{n=1}^\infty\zeta_{\rho_{\lambda,\mu,\nu_0-2dn}}\Bigr)'(0)=0.
		\end{equation}

		For $\nu\geq d$ we have
		\begin{equation}\label{E:zeta+}
			\zeta_{\rho_{\lambda,\mu,\nu}}(s)
			=\nu^{-2\kappa s+3/2}C_+(s)+R_{\nu,+}(s)
		\end{equation}
		where $C_+(s)$ and $R_{\nu,+}(s)$ are meromorphic functions on the entire complex plane 
		whose poles are all contained in the set $P$, and the estimate
		\begin{equation}\label{E:R+.esti}
			R_{\nu,+}(s)=O\left(\nu^{-5/4}\right)
		\end{equation}
		holds uniformly on compact subsets of $\{s\in\mathbb C\setminus P:\Re s\geq-1/8\kappa\}$ and for $\nu\geq d$.
		This follows from the second estimate in \cite[Corollary~1]{H23} by taking again, with the notation there, $N=7/4$, $\sigma=-1/8\kappa$, and $\varepsilon=d$.
		Combining \eqref{E:zetap.lmn.0}, \eqref{E:zeta+}, and \eqref{E:R+.esti}, we see that
		\begin{equation}\label{E:ECR+0}
			C_+(0)=R_{\nu,+}(0)=0
			\qquad\text{and}\qquad
			C'_+(0)=R'_{\nu,+}(0)=0.
		\end{equation}
		From \eqref{E:zeta+} we obtain
		\begin{multline}\label{E:zeta.cont+}
			\sum_{n=1}^\infty\zeta_{\rho_{\lambda,\mu,\nu_0+2dn}}(s)
			=(2d)^{-2\kappa s+3/2}\zeta_\Hurw\bigl(2\kappa s-\tfrac32;1+\tfrac{\nu_0}{2d}\bigr)C_+(s)
			\\
			+\sum_{n=1}^\infty R_{\nu_0+2dn,+}(s)
		\end{multline}
		where the last sum on the right hand side converges on compact subsets of $\{s\in\mathbb C\setminus P:\Re s\geq-1/8\kappa\}$ by the estimate in \eqref{E:R+.esti}.
		As before we conclude that $\sum_{n=1}^\infty\zeta_{\rho_{\lambda,\mu,\nu_0+2dn}}(s)$ converges for $\Re s>2/\kappa$, and the right hand side in \eqref{E:zeta.cont+} provides the meromorphic continuation to $\Re s>-1/8\kappa$.
		Using \eqref{E:ECR+0} this yields
		\begin{equation*}
			\Bigl(\sum_{n=1}^\infty\zeta_{\rho_{\lambda,\mu,\nu_0+2dn}}\Bigr)(0)=0
			\qquad\text{and}\qquad
			\Bigl(\sum_{n=1}^\infty\zeta_{\rho_{\lambda,\mu,\nu_0+2dn}}\Bigr)'(0)=0.
		\end{equation*}
		Combining this with \eqref{E:zeta-sum} and \eqref{E:zetap.lmn.0} we obtain the lemma.
	\end{proof}

	\begin{lemma}\label{L:zeta.III}
		The sum in \eqref{E:zeta.III} converges for $\Re s>10/2\kappa$, and 
		\begin{equation}\label{E:factor.III}
			\zeta_{\III,\Gamma,\chi}(s)
			=\tfrac1r\cdot Z^{(\Gamma'')^*}_\Epst\bigl(\tfrac{2\kappa s-4}3;\lambda_0,\mu_0\bigr)\cdot f(s)+\hat R(s),
		\end{equation}
		where
		\begin{equation}\label{E:Z.def}
			Z_\Epst^{(\Gamma'')^*}\bigl(s;\lambda_0,\mu_0\bigr)
			:=\sideset{}{'}\sum_{\substack{(\lambda,\mu)\in\\(\lambda_0,\mu_0)+(\Gamma'')^*}}(\lambda^2+\mu^2)^{-s/2}
		\end{equation}
		denotes an Epstein zeta function, $f(s)$ denotes a meromorphic function defined in \eqref{E:f} below, and $\hat R(s)$ is an entire function.
		In particular, $\zeta_{\III,\Gamma,\chi}(s)$ extends to a meromorphic function on the entire complex plane which has at most a simple pole at $s=10/2\kappa$ with residue $\frac{3\pi}{\kappa r}\cdot\Area(\R^2/\Gamma'')\cdot f(10/2\kappa)$.
		\footnote{If $f(10/2\kappa)$ vanishes then $\zeta_{\III,\Gamma,\chi}(s)$ is regular at $s=10/2\kappa$.}
		If $\chi|_{\Gamma\cap Z}\neq1$, then this is the only pole of $\zeta_{\III,\Gamma,\chi}(s)$.
		If $\chi|_{\Gamma\cap Z}=1$, then $\zeta_{\III,\Gamma,\chi}(s)$ has one further (simple) pole at $s=2/\kappa$ with residue $-\frac1r\cdot\res_{s=2/\kappa}f(s)$.
		\footnote{We do not rule out that this residue vanishes either.}
		Moreover,
		\begin{equation}\label{E:zeta.III.0}
			\zeta_{\III,\Gamma,\chi}(0)=0
			\qquad\text{and}\qquad
			\zeta_{\III,\Gamma,\chi}'(0)=0.
		\end{equation}
	\end{lemma}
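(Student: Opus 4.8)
The plan is to reduce the double series \eqref{E:zeta.III} to one application of Lemma~\ref{L:zeta.semi} for each lattice point $(\lambda,\mu)$, glued together by a two–dimensional Epstein zeta function that organizes the sum over $(\lambda_0,\mu_0)+(\Gamma'')^*$. First I would dispose of the inner sum over $n$. By \eqref{E:M.mult} the multiplicity $m(\tfrac dr,r,w,n)$ depends only on $n$ modulo $2d/r$, so writing $n=(2d/r)\ell+j$ with $j\in\{1,\dots,2d/r\}$ and $\ell\in\Z$ gives
\[
	\sum_{n\in\Z}m(\tfrac dr,r,w,n)\,\zeta_{\rho_{\lambda,\mu,\nu_0+nr}}(s)
	=\sum_{j=1}^{2d/r}m(\tfrac dr,r,w,j)\;\sum_{\ell\in\Z}\zeta_{\rho_{\lambda,\mu,(\nu_0+jr)+2d\ell}}(s),
\]
and every inner sum over $\ell$ is exactly of the type treated in Lemma~\ref{L:zeta.semi} (with its ``$d$'' equal to our $d$ and its ``$\nu_0$'' equal to $\nu_0+jr$). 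Consequently, for each $(\lambda,\mu)$ the function $I_{\lambda,\mu}(s):=\sum_{n\in\Z}m(\tfrac dr,r,w,n)\zeta_{\rho_{\lambda,\mu,\nu_0+nr}}(s)$ converges for $\Re s>2/\kappa$, continues meromorphically to $\C$ with poles confined to $s=(2-3j)/\kappa$, $j\in\N_0$ (cf.\ the footnote to Lemma~\ref{L:zeta.semi}), and satisfies $I_{\lambda,\mu}(0)=I_{\lambda,\mu}'(0)=0$.

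Next I would carry out the sum over $(\lambda,\mu)\in(\lambda_0,\mu_0)+(\Gamma'')^*$. By \eqref{E:zeta.homog} and the invariance of the generic representations under rotations of $\goe_{-1}$, under which the parameter $\nu$ is unchanged (cf.\ \eqref{E:zeta.phi} and \cite[Section~3.1]{H23}), one has $\zeta_{\rho_{\lambda,\mu,\nu}}(s)=(\lambda^2+\mu^2)^{-\kappa s/3}\zeta_{\rho_{1,0,\,(\lambda^2+\mu^2)^{-2/3}\nu}}(s)$. As $\lambda^2+\mu^2\to\infty$ the values of $\nu$ carrying positive multiplicity become equidistributed with density $1/(2r)$ — here it is essential that the average of $m(\tfrac dr,r,w,\cdot)$ over a period equals $\tfrac12$ by \eqref{E:m.total}, so that the dependence on $d=\gcd(\lambda,\mu)$ and on the phase $\nu_0$ disappears — so that $I_{\lambda,\mu}(s)=\tfrac1{2r}\int_{-\infty}^\infty\zeta_{\rho_{\lambda,\mu,\nu}}(s)\,d\nu+(\text{lower order})=\tfrac1r(\lambda^2+\mu^2)^{(2-\kappa s)/3}f(s)+(\text{lower order})$, where
\begin{equation}\label{E:f}
	f(s):=\frac12\int_{-\infty}^\infty\zeta_{\rho_{1,0,y}}(s)\,dy.
\end{equation}
The integral in \eqref{E:f} converges for $\Re s>2/\kappa$, and by the properties of $\zeta_{\rho_{1,0,y}}(s)$ established in \cite{H23} (applied as in the proof of Lemma~\ref{L:zeta.semi}) it continues to a meromorphic function on $\C$ whose poles lie among $s=(2-3j)/\kappa$, $j\in\N_0$. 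Subtracting the leading term, summing, and recognizing $\sideset{}{'}\sum_{(\lambda,\mu)}(\lambda^2+\mu^2)^{(2-\kappa s)/3}=Z^{(\Gamma'')^*}_\Epst\bigl(\tfrac{2\kappa s-4}3;\lambda_0,\mu_0\bigr)$, cf.\ \eqref{E:Z.def}, yields \eqref{E:factor.III} with $\hat R(s):=\sideset{}{'}\sum_{(\lambda,\mu)}\bigl(I_{\lambda,\mu}(s)-\tfrac1r(\lambda^2+\mu^2)^{(2-\kappa s)/3}f(s)\bigr)$; the series \eqref{E:zeta.III} converges for $\Re s>10/2\kappa$ because $Z^{(\Gamma'')^*}_\Epst\bigl(\tfrac{2\kappa s-4}3;\lambda_0,\mu_0\bigr)$ does.

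Granting that $\hat R$ is entire, the remaining assertions follow formally. The shifted planar Epstein zeta $Z^{(\Gamma'')^*}_\Epst(\sigma;\lambda_0,\mu_0)$ converges for $\Re\sigma>2$, is meromorphic with a single simple pole at $\sigma=2$ of residue $2\pi/\mathrm{covol}((\Gamma'')^*)=2\pi\Area(\R^2/\Gamma'')$, vanishes at $\sigma\in\{-2,-4,-6,\dots\}$, and by the analogue of \eqref{E:qwerty99} has $Z^{(\Gamma'')^*}_\Epst(0;\lambda_0,\mu_0)=-1$ if $(\lambda_0,\mu_0)\in(\Gamma'')^*$ and $0$ otherwise. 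With $\sigma=\tfrac{2\kappa s-4}3$ this gives the simple pole of $\zeta_{\III,\Gamma,\chi}$ at $s=10/2\kappa$ of residue $\tfrac1r\cdot\tfrac3{2\kappa}\cdot2\pi\Area(\R^2/\Gamma'')\cdot f(10/2\kappa)=\tfrac{3\pi}{\kappa r}\Area(\R^2/\Gamma'')\,f(10/2\kappa)$. At a pole $s=(2-3j)/\kappa$ of $f$ the argument $\sigma$ equals $-2j$; for $j\ge1$ the Epstein factor vanishes there and kills the pole, while for $j=0$, i.e.\ $s=2/\kappa$, one has $\sigma=0$, and by the description \eqref{E:Gamma''*} of $(\Gamma'')^*$ and by Lemma~\ref{L:lm0} the condition $(\lambda_0,\mu_0)\in(\Gamma'')^*$ is equivalent to $c\in\Z$ together with $\chi(\gamma_4)=\chi(\gamma_5)=1$, i.e.\ to $\chi|_{\Gamma\cap Z}=1$ (Lemma~\ref{L:Richardson:235}(II)); this accounts for the conditional second pole at $s=2/\kappa$ of residue $-\tfrac1r\res_{s=2/\kappa}f(s)$. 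Finally, $\zeta_{\rho_{1,0,\nu}}(0)=\zeta_{\rho_{1,0,\nu}}'(0)=0$ for all $\nu$ forces $f(0)=f'(0)=0$ (the meromorphic continuation of \eqref{E:f} inherits this, since the relevant functions from \cite{H23} all vanish to second order at $s=0$), and likewise $I_{\lambda,\mu}(0)=I_{\lambda,\mu}'(0)=0$ forces each summand of $\hat R$ to vanish to second order at $s=0$; combined with \eqref{E:factor.III} this gives $\zeta_{\III,\Gamma,\chi}(0)=\zeta_{\III,\Gamma,\chi}'(0)=0$.

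The one genuinely delicate step — and, as noted in Section~\ref{S:intro}, the subtlest part of the paper — is to prove that $\hat R(s)$ is \emph{entire}. This requires turning the equidistribution heuristic above into a quantitative remainder estimate: using the expansions \eqref{E:zeta-} and \eqref{E:zeta+} from \cite[Corollary~1]{H23} with a large truncation parameter, together with uniform bounds for $\zeta_{\rho_{1,0,y}}(s)$ on compact $y$–sets, one must show that $I_{\lambda,\mu}(s)-\tfrac1r(\lambda^2+\mu^2)^{(2-\kappa s)/3}f(s)=O\bigl((\lambda^2+\mu^2)^{-N}\bigr)$ for arbitrarily large $N$, locally uniformly for $s$ in $\C$ off the (locally finite) pole set, so that the lattice series defining $\hat R$ converges locally uniformly on all of $\C$. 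The two points demanding care are the ``central'' range of the inner sum, where $|\nu|$ does not exceed a fixed multiple of $(\lambda^2+\mu^2)^{2/3}$ and the expansions \eqref{E:zeta-}--\eqref{E:zeta+} are unavailable, so that one must instead estimate the Riemann–sum error for $\zeta_{\rho_{1,0,\cdot}}(s)$ directly (this is also what underlies the convergence of \eqref{E:f}), and the bookkeeping of the Hurwitz zeta functions produced by the $\ell$–summation, which must be assembled over the $2d/r$ residue classes $j$ before the $d$–dependence cancels, so that it is the error rather than any individual Hurwitz term that gets estimated. Once these uniform estimates are in hand, the remainder of the argument is the routine manipulation indicated above.
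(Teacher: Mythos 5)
Your reduction of the inner sum to Lemma~\ref{L:zeta.semi}, your identification of the main term, and your Epstein-zeta analysis of the poles and of the values at $s=0$ all match the structure of the paper's argument, and your $f(s)=\tfrac12\int_\R\zeta_{\rho_{1,0,y}}(s)\,dy$ does coincide with \eqref{E:f} after unwinding the Mellin transform. But the proof has a genuine gap exactly where you flag it: you never prove that $\hat R(s)$ is entire, i.e.\ that $I_{\lambda,\mu}(s)-\tfrac1r(\lambda^2+\mu^2)^{(2-\kappa s)/3}f(s)$ decays faster than any power of $\lambda^2+\mu^2$, \emph{locally uniformly in $s$ on all of $\C$}. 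The route you sketch --- the expansions \eqref{E:zeta-}--\eqref{E:zeta+} from \cite[Corollary~1]{H23} plus a Riemann-sum error estimate in the central range --- is problematic beyond the difficulties you name: as recalled in the proof of Lemma~\ref{L:zeta.semi} those expansions (with the choice made there) only control the half-plane $\Re s\ge-1/8\kappa$, whereas entirety of $\hat R$ requires uniform estimates on every compact subset of $\C$, with the truncation order, the $d$-dependence of the Hurwitz bookkeeping (note $d$ can be as large as $\sqrt{\lambda^2+\mu^2}$), and the equidistribution error all quantified simultaneously. Likewise, your assertions that the poles of $f$ lie only at $s=(2-3j)/\kappa$ and that $f(0)=f'(0)=0$ are not consequences of anything you have established; from your definition of $f$ as a $\nu$-integral these need an argument.

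The paper closes this gap by a different mechanism: it applies the Euler--Maclaurin formula not to the zeta functions but to the heat traces $\vartheta_{\lambda,\mu,\nu}(t)$, using the explicit Schwartz-function representation \eqref{E:theta.k} from \cite[Eq.~(203)]{H23}. The $N$-th derivative $\partial^N k/\partial x_1^N$ appearing in the Euler--Maclaurin remainder yields, after the Mellin transform, the explicit error term \eqref{E:R} with the bound \eqref{E:R.esti}, $R_{\lambda,\mu,\nu_0,d,N}(s)=O\bigl(d^{N-1}(\lambda^2+\mu^2)^{(2-2N-\kappa\Re s)/3}\bigr)$ valid for $\Re s>(4-N)/2\kappa$; combined with $d\le\sqrt{\lambda^2+\mu^2}$ this gives \eqref{E:tR.esti}, convergence of $\hat R_N$ on $\Re s>(10-N)/2\kappa$, and, since $N$ is arbitrary and $\hat R_N$ is independent of $N$, entirety of $\hat R$. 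The same representation, together with the symmetry \eqref{E:k.sym} of $k$, also delivers the pole structure of $f$ (simple poles only at $s=(2-3j)/\kappa$), while $f(0)=f'(0)=0$ and the corresponding vanishing of the remainders are extracted from Lemma~\ref{L:zeta.semi} via \eqref{E:zeta.k} with $N>4$. To complete your proof you would either have to import this heat-trace Euler--Maclaurin argument or substantially upgrade the zeta-level estimates you invoke; as written, the central claim of the lemma rests on an unproven estimate.
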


	\begin{proof}
		There exists a graded automorphism $\phi\in\Aut_\grr(\goe)$ preserving $g$ and acting as a rotation on $\goe_{-3}$ such that $\rho_{\lambda,\mu,\nu}\circ\phi$ is unitarily equivalent to $\rho_{\sqrt{\lambda^2+\mu^2},0,\nu}$, cf.~\cite[Eq.~(48)]{H23}.
		Hence, \cite[Section~3.1]{H23}
		\begin{equation}\label{E:rot.III}
			\tr\left(e^{-t\rho_{\lambda,\mu,\nu}(\Delta_q)}\right)
			=\tr\left(e^{-t\rho_{\sqrt{\lambda^2+\mu^2},0,\nu}(\Delta_q)}\right).
		\end{equation}
		For $t>0$ put
		\begin{equation}\label{E:theta.def}
			\vartheta_{\lambda,\mu,\nu}(t)
			:=\str\left(Ne^{-t\rho_{\lambda,\mu,\nu}(\Delta)}\right)
			:=\sum_{q=0}^5(-1)^qN_q\tr\left(e^{-t\rho_{\lambda,\mu,\nu}(\Delta_q)}\right).
		\end{equation}
		Using \eqref{E:rot.III} and \cite[Equation~(203)]{H23} we conclude that there exist a Schwartz function $k\in\mathcal S(\R^3)$ such that 
		\begin{equation}\label{E:theta.k}
			\vartheta_{\lambda,\mu,\nu}(t)
			=\frac{t^{-3/4\kappa}}{\sqrt[4]{\lambda^2+\mu^2}}\int_{-\infty}^\infty k\begin{pmatrix}\frac{t^{2/4\kappa}\nu}{2\sqrt{\lambda^2+\mu^2}}+\frac{x^2}2\\t^{3/4\kappa}\sqrt[4]{\lambda^2+\mu^2}x\\t^{6/4\kappa}\sqrt{\lambda^2+\mu^2}\end{pmatrix}dx.
		\end{equation}
		According to \cite[Eq.~(188)]{H23} the function $k$ enjoys the symmetry
		\begin{equation}\label{E:k.sym}
			k\begin{pmatrix}-x_1\\x_2\\-x_3\end{pmatrix}
			=k\begin{pmatrix}x_1\\x_2\\x_3\end{pmatrix}.
		\end{equation}
		Note that $\vartheta_{\lambda,\mu,\nu}(t)$ is a Schwartz function in $\nu$, for fixed $(\lambda,\mu)\neq(0,0)$ and $t>0$.

		By the Euler--Maclaurin summation formula, for $\nu_0\in\R$ and $0\neq d\in\R$,
		\begin{multline*}
			\sum_{n\in\Z}\vartheta_{\lambda,\mu,\nu_0+2dn}(t)
			=\int_\R\vartheta_{\lambda,\mu,\nu_0+2d\nu}(t)d\nu
			\\+\frac{(-1)^{N+1}}{N!}\int_\R\left(\tfrac{\partial^N}{\partial\nu^N}\vartheta_{\lambda,\mu,\nu_0+2d\nu}(t)\right)P_N(\nu)d\nu
		\end{multline*}
		where $P_N(x)=B_N(x-\lfloor x\rfloor)$ denotes the periodic extension of the $N$-th Bernoulli polynomial restricted to the unit interval.
		Substituting $\nu_0+2d\nu\leftrightarrow\nu$ in the integrals, this yields
		\begin{multline*}
			\sum_{n\in\Z}\vartheta_{\lambda,\mu,\nu_0+2dn}(t)
			=\frac1{2d}\int_\R\vartheta_{\lambda,\mu,\nu}(t)d\nu
			\\+\frac{(-1)^{N+1}(2d)^{N-1}}{N!}\int_\R\left(\tfrac{\partial^N}{\partial\nu^N}\vartheta_{\lambda,\mu,\nu}(t)\right)P_N\left(\frac{\nu-\nu_0}{2d}\right)d\nu.
		\end{multline*}
		Plugging in \eqref{E:theta.k}, we obtain
		\begin{multline*}
			\sum_{n\in\Z}\vartheta_{\lambda,\mu,\nu_0+2dn}(t)
			=\frac{t^{-3/4\kappa}}{2d\sqrt[4]{\lambda^2+\mu^2}}\int_{\R^2}k\begin{pmatrix}\frac{t^{2/4\kappa}\nu}{2\sqrt{\lambda^2+\mu^2}}+\frac{x^2}2\\t^{3/4\kappa}\sqrt[4]{\lambda^2+\mu^2}x\\t^{6/4\kappa}\sqrt{\lambda^2+\mu^2}\end{pmatrix}dxd\nu
			\\+\frac{(-1)^{N+1}d^{N-1}t^{-3/4\kappa}t^{N/2\kappa}}{2N!\sqrt[4]{\lambda^2+\mu^2}(\lambda^2+\mu^2)^{N/2}}
			\int_{\R^2}\frac{\partial^Nk}{\partial x_1^N}\begin{pmatrix}\frac{t^{2/4\kappa}\nu}{2\sqrt{\lambda^2+\mu^2}}+\frac{x^2}2\\t^{3/4\kappa}\sqrt[4]{\lambda^2+\mu^2}x\\t^{6/4\kappa}\sqrt{\lambda^2+\mu^2}\end{pmatrix}P_N\left(\frac{\nu-\nu_0}{2d}\right)dxd\nu.
		\end{multline*}
		Substituting $\Bigl(\frac{t^{2/4\kappa}\nu}{2\sqrt{\lambda^2+\mu^2}}+\frac{x^2}2,t^{3/4\kappa}\sqrt[4]{\lambda^2+\mu^2}x\Bigr)\leftrightarrow(\nu,x)$ in the integrals, this yields
		\begin{multline}\label{E:qwerty}
			\sum_{n\in\Z}\vartheta_{\lambda,\mu,\nu_0+2dn}(t)
			=\frac{t^{-4/2\kappa}}d\int_{\R^2}k\begin{pmatrix}\nu\\ x\\t^{3/2\kappa}\sqrt{\lambda^2+\mu^2}\end{pmatrix}dxd\nu
			\\+\frac{(-1)^{N+1}d^{N-1}t^{(N-4)/2\kappa}}{N!(\lambda^2+\mu^2)^{N/2}}
			\int_{\R^2}\frac{\partial^Nk}{\partial x_1^N}\begin{pmatrix}\nu\\x\\t^{3/2\kappa}\sqrt{\lambda^2+\mu^2}\end{pmatrix}
			\\\cdot P_N\left(\frac{2t^{-1/2\kappa}\sqrt{\lambda^2+\mu^2}\nu-t^{-2/\kappa}x^2-\nu_0}{2d}\right)dxd\nu.
		\end{multline}
		From \eqref{E:zeta.rho.def} and \eqref{E:theta.def} we have
		\[
			\zeta_{\rho_{\lambda,\mu,\nu}}(s)
			=\frac1{\Gamma(s)}\int_0^\infty t^{s-1}\vartheta_{\lambda,\mu,\nu}(t)dt.
		\]
		Plugging in \eqref{E:qwerty} and substituting $t^{3/2\kappa}\sqrt{\lambda^2+\mu^2}\leftrightarrow t^{3/2\kappa}$ in the integrals, we get
		\begin{equation}\label{E:zeta.k}
			\sum_{n\in\Z}\zeta_{\rho_{\lambda,\mu,\nu_0+2dn}}(s)
			=\frac{(\lambda^2+\mu^2)^{(2-\kappa s)/3}}d\cdot f(s)+R_{\lambda,\mu,\nu_0,d,N}(s)
		\end{equation}
		where 
		\begin{equation}\label{E:f}
			f(s):=\frac1{\Gamma(s)}\int_0^\infty t^{s-1}t^{-4/2\kappa}\int_{\R^2}k\begin{pmatrix}\nu\\ x\\t^{3/2\kappa}\end{pmatrix}dxd\nu dt
		\end{equation}
		and
		\begin{multline}\label{E:R}
			R_{\lambda,\mu,\nu_0,d,N}(s)
			:=\frac{(-1)^{N+1}d^{N-1}(\lambda^2+\mu^2)^{(2-2N-\kappa s)/3}}{N!\Gamma(s)}
			\int_0^\infty t^{s-1}t^{(N-4)/2\kappa}
			\\\cdot\int_{\R^2}\frac{\partial^Nk}{\partial x_1^N}\begin{pmatrix}\nu\\x\\t^{3/2\kappa}\end{pmatrix}
				P_N\left(\frac{(\lambda^2+\mu^2)^{2/3}(2t^{-1/2\kappa}\nu-t^{-2/\kappa}x^2)-\nu_0}{2d}\right)dxd\nu dt.
		\end{multline}
		The integral in \eqref{E:f} converges for $\Re s>4/2\kappa$ and extends to a meromorphic function on the entire complex plane which has only simple poles and these can only be located at $s=(2-3j)/\kappa$, $j\in\N_0$.
		Note here that the inner integral over $\R^2$ in \eqref{E:f} results in a Schwartz function in the variable $t^{3/2\kappa}$ which is even according to the symmetry in \eqref{E:k.sym}.
		The integral in \eqref{E:R} converges for $\Re s>(4-N)/2\kappa$ and the estimate
		\begin{equation}\label{E:R.esti}
			R_{\lambda,\mu,\nu_0,d,N}(s)=O\left(d^{N-1}(\lambda^2+\mu^2)^{(2-2N-\kappa\Re s)/3}\right),
		\end{equation}
		holds uniformly for $(\lambda,\mu)\neq(0,0)$, $\nu_0$, $d\neq0$ and uniformly for $s$ in compact subsets contained in $\Re s>(4-N)/2\kappa$.
		Hence, $\sum_{n\in\Z}\zeta_{\rho_{\lambda,\mu,\nu_0+2dn}}(s)$ converges for $\Re s>4/2\kappa$ and \eqref{E:zeta.k} provides the meromorphic extension to the entire complex plane whose poles can only occur at $s=(2-3j)/\kappa$, $j\in\N_0$.
		Assuming $N>4$ and using Lemma~\ref{L:zeta.semi} we obtain from \eqref{E:zeta.k} and \eqref{E:R.esti}
		\begin{equation}\label{E:fR.0}
			f(0)=R_{\lambda,\mu,\nu_0,d,N}(0)=0
			\qquad\text{and}\qquad
			f'(0)=R_{\lambda,\mu,\nu_0,d,N}'(0)=0.
		\end{equation}

		Now suppose $(0,0)\neq(\lambda,\mu)\in(\lambda_0,\mu_0)+(\Gamma'')^*$ as in \eqref{E:zeta.III}.
		In particular, $\lambda$ and $\mu$ are integers divisible by $r$, and so is $d=\gcd(\lambda,\mu)$.
		Also recall the integer $w$ defined in \eqref{E:w.def}, and the real number $\nu_0$ is given by \eqref{E:nu0.def}.
		Using \eqref{E:M.mult} and \eqref{E:m.total} we get from \eqref{E:zeta.k}	
		\begin{equation}\label{E:zeta.k2}
			\sum_{n\in\Z}m\bigl(\tfrac dr,r,w,n\bigr)\zeta_{\rho_{\lambda,\mu,\nu_0+rn}}(s)
			=\frac{(\lambda^2+\mu^2)^{(2-\kappa s)/3}}r\cdot f(s)
			+\tilde R_{\lambda,\mu,N}(s)
		\end{equation}
		where the sum on the left hand side converges for $\Re s>4/2\kappa$ and 
		\[
			\tilde R_{\lambda,\mu,N}(s)
			:=\sum_{n=1}^{2d/r}m\bigl(\tfrac dr,r,w,n\bigr)R_{\lambda,\mu,\nu_0+rn,d,N}(s).
		\]
		Assuming $N>4$ we obtain from \eqref{E:fR.0}
		\begin{equation}\label{E:tR.0}
			\tilde R_{\lambda,\mu,N}(0)=0
			\qquad\text{and}\qquad
			\tilde R_{\lambda,\mu,N}'(0)=0.
		\end{equation}
		From \eqref{E:R.esti} and \eqref{E:m.total}, using the obvious estimate $|d|\leq\sqrt{\lambda^2+\mu^2}$, we get
		\begin{equation}\label{E:tR.esti}
			\tilde R_{\lambda,\mu,N}(s)=O\left((\lambda^2+\mu^2)^{(4-N-2\kappa\Re s)/6}\right),
		\end{equation}
		uniformly for $(0,0)\neq(\lambda,\mu)\in(\lambda_0,\mu_0)+(\Gamma'')^*$, and uniformly for $s$ in compact subsets contained in $\Re s>(4-N)/2\kappa$.
		Summing over all $(\lambda,\mu)\neq(0,0)$ in the shifted lattice, we obtain from \eqref{E:zeta.III} and \eqref{E:zeta.k2},
		\begin{equation}\label{E:zeta.III.fR2}
			\zeta_{\III,\Gamma,\chi}(s)
			=\tfrac1r\cdot Z^{(\Gamma'')^*}_\Epst\bigl(\tfrac{2\kappa s-4}3;\lambda_0,\mu_0\bigr)\cdot f(s)+\hat R_N(s),
		\end{equation}
		where the Epstein zeta function is defined in \eqref{E:Z.def}, and
		\begin{equation}\label{E:hatR.def}
			\hat R_N
			:=\sideset{}{'}\sum_{\substack{(\lambda,\mu)\in\\(\lambda_0,\mu_0)+(\Gamma'')^*}}\tilde R_{\lambda,\mu,N}(s).
		\end{equation}
		The sum in \eqref{E:hatR.def} converges uniformly on compact subsets of $\Re s>(10-N)/2\kappa$ by the estimate in \eqref{E:tR.esti}.
		Assuming $N>10$ we obtain from \eqref{E:tR.0} and \eqref{E:hatR.def},
		\begin{equation}\label{E:hatR.0}
			\hat R_N(0)=0
			\qquad\text{and}\qquad
			\hat R_N'(0)=0.
		\end{equation}
		As $\hat R_N(s)$ is an entire function which is independent of $N$, we obtain \eqref{E:factor.III}.

		Using a basis of the lattice $(\Gamma'')^*$, we may write
		\[
			Z_\Epst^{(\Gamma'')^*}\bigl(s;\lambda_0,\mu_0\bigr)
			=\sideset{}{'}\sum_{(\alpha,\beta)\in(\alpha_0,\beta_0)+\Z^2}\bigl(\varphi(\alpha,\beta)\bigr)^{-s/2}
		\]
		where $(\alpha_0,\beta_0)\in\R^2$ corresponds to $(\lambda_0,\mu_0)$, and $\varphi(\alpha,\beta)$ is a positive quadratic form with $\sqrt{\det\varphi}=\Area((\R^2)^*/(\Gamma'')^*)=1/\Area(\R^2/\Gamma'')$.
		Hence, this Epstein \cite[p.~627]{E03} zeta function converges for $\Re s>2$, it extends to a meromorphic function on the entire complex plane which has a single (simple) pole at $s=2$ with residue $2\pi/\sqrt{\det\varphi}=2\pi\cdot\Area(\R^2/\Gamma'')$, it vanishes at $s\in-2\N$, and it satisfies
		\begin{equation}\label{E:qwerty9999}
			Z^{(\Gamma'')^*}_\Epst(0;\lambda_0,\mu_0)=
			\begin{cases}
				-1&\text{if $(\lambda_0,\mu_0)\in\Gamma''$, and}\\
				0&\text{otherwise.}
			\end{cases}
		\end{equation}
		Note that $(\lambda_0,\mu_0)\in\Gamma''$ if and only if $\chi|_{\Gamma\cap Z}=1$ by Lemma~\ref{L:lm0} and Lemma~\ref{L:Richardson:235}(II).
		Combining this with \eqref{E:factor.III} we see that the sum in \eqref{E:zeta.III} converges for $\Re s>10/2\kappa$ and that $\zeta_{\III,\Gamma,\chi}(s)$ extends to a meromorphic function on the entire complex plane which has a simple pole at $s=10/2\kappa$ with residue $\frac1r\cdot\frac3{2\kappa}\cdot2\pi\cdot\Area(\R^2/\Gamma'')\cdot f(10/2\kappa)$.
		If $\chi|_{\Gamma\cap Z}\neq1$, then the zeros of $Z_\Epst^{(\Gamma'')^*}(\frac{2\kappa s-4}3;\lambda_0,\mu_0)$ cancel all the poles of $f(s)$ and $\zeta_{\III,\Gamma,\chi}(s)$ is holomorphic for $s\neq10/2\kappa$.
		If $\chi|_{\Gamma\cap Z}=1$, then all but one pole get canceled and $\zeta_{\III,\Gamma,\chi}(s)$ has one further pole at $s=2/\kappa$ with residue $-\frac1r\cdot\res_{s=2/\kappa}f(s)$.
		As $Z_\Epst^{(\Gamma'')^*}\bigl(s;\lambda_0,\mu_0\bigr)$ is holomorphic at $s=-4/3$ we obtain \eqref{E:zeta.III.0} by combining \eqref{E:zeta.III.fR2} with \eqref{E:fR.0} and \eqref{E:hatR.0}.
	\end{proof}

	\begin{remark}
		It is known that $\zeta_{\Gamma\setminus G,\mathcal D_{\Gamma\setminus G},F_\chi,g_{\Gamma\setminus G},h_\chi}(s)$ has a single pole at $s=10/2\kappa$, see \cite[proof of Lemma~4.2]{H22} or \cite[Theorem~1.1]{F22}.
		Hence, in view of \eqref{E:zeta.deco} the poles at $s=1/\kappa$ and $s=2/\kappa$ of $\zeta_{\I,\Gamma,\chi}(s)$ and $\zeta_{\II,\Gamma,\chi}(s)$ must cancel.
		By Lemmas~\ref{L:zeta.I}, \ref{L:zeta.II}, and \ref{L:zeta.III} this is the case if and only if
		\begin{equation}\label{E:cons1}
			\res_{s=1/\kappa}\zeta_{\rho_1}(s)=\frac\pi\kappa\cdot\zeta_{\rho_{1,0}}(1/\kappa)
		\end{equation}
		and
		\begin{equation}\label{E:cons2}
			\res_{s=2/\kappa}f(s)=\frac2\kappa\cdot\zeta_{\rho_1}(2/\kappa).
		\end{equation}
		Moreover, we must have
		\begin{equation}\label{E:cons3}
			\res_{s=10/2\kappa}\zeta_{\Gamma\setminus G,\mathcal D_{\Gamma\setminus G},F_\chi,g_{\Gamma\setminus G},h_\chi}(s)
			=\frac{3\pi}{\kappa r}\cdot\Area(\R^2/\Gamma'')\cdot f(10/2\kappa).
		\end{equation}

		Proceeding as in \cite[Section~4]{H23} one readily obtains the spectra of $\rho_{1,0}(\Delta_q)$.
		For $q=0,5$ we only have the eigenvalue $(2\pi)^{2\kappa}$ with multiplicity one; 
		for $q=1,4$ we only have the eigenvalue $(2\pi)^{2\kappa}$ with multiplicity two;
		and for $q=2,3$ we have the eigenvalue $(2\pi)^{2\kappa}$ with multiplicity one and the eigenvalue $(2\pi)^{2\kappa}2^{-\kappa/2}$ with multiplicity two.
		Plugging this into \eqref{E:zeta.rho.def} we obtain
		\begin{equation}\label{E:cons4}
			\zeta_{\rho_{1,0}}(s)
			=(2\pi)^{-2\kappa s}\cdot 4\left(1-2^{\kappa s/2}\right).
		\end{equation}

		In \cite[Section~5]{H23} the spectra of $\rho_\hbar(D_q^*D_q)$ have been determined explicitly.
		Using the computations in \cite[Section~5.4]{H23} we see that $\tr\rho_\hbar(\Delta_q)^{-s}$ has a simple pole at $s=1/\kappa$.
		For $q=0,5$ the residue is $1/4\pi\hbar\kappa$, for $q=1,4$ the residue is $1/2\pi\hbar\kappa$, and for $q=2,3$ the residue is $1/4\pi\hbar\kappa+\sqrt2/2\pi\kappa$.
		Using \eqref{E:zeta.rho.def} this yields 
		\begin{equation}\label{E:cons5}
			\res_{s=1/\kappa}\zeta_{\rho_\hbar}(s)=\frac{1-\sqrt2}{\pi\kappa\hbar}
		\end{equation}

		Combining \eqref{E:cons4} with \eqref{E:cons5} we do indeed get \eqref{E:cons1}.
		We will not attempt to verify \eqref{E:cons2} and \eqref{E:cons3} independently here.
	\end{remark}

\section{Proof of the main theorem}\label{S:proof}

	In order to prove the theorem formulated in the introduction, note first that we may w.l.o.g.\ assume $\Gamma$, $\mathcal D_{\Gamma\setminus G}$, and $g_{\Gamma\setminus G}$ to be of standard form.
	Indeed, by Lemma~\ref{L:all.lat}, we may assume that the lattice $\Gamma$ is of the form considered in Section~\ref{S:lat}, i.e., generated by the exponentials of $\tilde\gamma_i$ as in \eqref{E:Gamma.tgen}.
	Moreover, according to \cite[Lemma~4.2]{H22} the torsion of the Rumin complex does not depend on the choice of $\mathcal D_{\Gamma\setminus G}$ and $g_{\Gamma\setminus G}$, as long as they are induced from a left invariant (2,3,5) distribution on $G$ and a left invariant fiberwise graded Euclidean inner product on $\mathfrak tG$, respectively.
	Hence we may also assume that $\mathcal D_{\Gamma\setminus G}$ and $g_{\Gamma\setminus G}$ are of the form considered in Section~\ref{S:zeta.deco}, i.e., $\mathcal D_{\Gamma\setminus G}$ is induced by the left invariant 2-plane field $\mathcal D_G$ on $G$ spanned by $X_1$ and $X_2$, and $g_{\Gamma\setminus G}$ is induced from a graded Euclidean inner product $g$ on $\goe$ such that $X_1,\dotsc,X_5$ are orthonormal.
	Strictly speaking, \cite[Lemma~4.2]{H22} only covers graded Euclidean inner products induced from left invariant sub-Riemannian metrics on $\mathcal D_G$, but the proof there can readily be extended to the generality required here.

	As the Rumin complex computes the de~Rham cohomology $H^*(\Gamma\setminus G;F_\chi)$, one can use the Leray--Serre spectral sequence to prove the acyclicity of the Rumin complex, cf.~Lemma~\ref{L:RS.Tor} below.
	However, the acyclicity can also be read off the decomposition provided in Lemma~\ref{L:Richardson:235}.
	Indeed, the irreducible unitary representations appearing in this decomposition are all nontrivial, as $\chi$ is assumed to be nontrivial.
	Since the Rumin complex is a Rockland complex, it becomes exact in every nontrivial unitary representation of $G$.
	The acyclicity of the Rumin complex thus follows from the decomposition in \eqref{E:comp.deco}.

	Combining Lemma~\ref{L:zeta.deco} with Equations~\eqref{E:zeta.I.0}, \eqref{E:zeta.II.0}, and \eqref{E:zeta.III.0} yields
	\[
		\zeta_{\Gamma\setminus G,\mathcal D_{\Gamma\setminus G},F_\chi,g_{\Gamma\setminus G},h_\chi}'(0)
		=\zeta_{\I,\Gamma,\chi}'(0)+\zeta_{\II,\Gamma,\chi}'(0)+\zeta_{\III,\Gamma,\chi}'(0)=0.
	\]
	Using \eqref{E:tau.via.zeta} we obtain
	\[
		\tau(\Gamma\setminus G,\mathcal D_{\Gamma\setminus G},F_\chi,g_{\Gamma\setminus G},h_\chi)=1.
	\] 
	This completes the proof of the theorem.

	To derive the corollary we need to know the Ray--Singer torsion of $\Gamma\setminus G$.

	\begin{lemma}\label{L:RS.Tor}
		If $\chi\colon\Gamma\to U(1)$ is a nontrivial character, then $H^*(\Gamma\setminus G;F_\chi)=0$ and the Ray--Singer torsion is trivial, $\tau_\RS(\Gamma\setminus G;F_\chi)=1$.
	\end{lemma}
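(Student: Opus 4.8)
The plan is to prove the two assertions separately, reducing both to tori by means of the central fibrations of $\Gamma\setminus G$.

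\emph{Vanishing of $H^*(\Gamma\setminus G;F_\chi)$.} Since $G$ is contractible, $\Gamma\setminus G$ is a $K(\Gamma,1)$ and $H^*(\Gamma\setminus G;F_\chi)=H^*(\Gamma;\C_\chi)$, the group cohomology of $\Gamma$ acting on $\C$ through $\chi$. I would prove, more generally and by induction on $\dim N$, that $H^*(\Lambda\setminus N;F_\psi)=0$ for every lattice $\Lambda$ in a simply connected nilpotent Lie group $N$ and every nontrivial unitary character $\psi\colon\Lambda\to U(1)$. If $N$ is abelian, $\Lambda\setminus N$ is a torus and this is the classical vanishing of $H^*(\Z^k;\C_\psi)$ for nontrivial $\psi$ (the twisted Koszul complex is exact because $\psi$ differs from $1$ on some generator). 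For the inductive step, let $Z_N$ be the centre of $N$; then $\Lambda\cap Z_N$ is a lattice in $Z_N$ and projecting along $Z_N$ realizes $\Lambda\setminus N$ as the total space of a torus bundle $T=(\Lambda\cap Z_N)\setminus Z_N\to\Lambda\setminus N\to\bar\Lambda\setminus(N/Z_N)$ over a nilmanifold of smaller dimension (with $\bar\Lambda$ the image of $\Lambda$ in $N/Z_N$). The restriction of $F_\psi$ to a fibre has holonomy $\psi|_{\Lambda\cap Z_N}$; if this is nontrivial then $H^*(T;F_\psi|_T)=0$ and the Leray--Serre spectral sequence of the bundle collapses to zero; if it is trivial then $\psi$ descends to a nontrivial character $\bar\psi$ of $\bar\Lambda$, the monodromy of $\bar\Lambda$ on $H^*(T;\C)$ is trivial because it is induced by conjugation on the central subgroup $Z_N$, hence $E_2^{p,q}=H^p(\bar\Lambda\setminus(N/Z_N);F_{\bar\psi})\otimes H^q(T;\C)=0$ by the inductive hypothesis. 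Applying this with $N=G$ and $Z_N=Z=\exp\zoe$ (so $\Gamma\cap Z$ is the lattice in \eqref{E:GammaZ}) gives $H^*(\Gamma\setminus G;F_\chi)=0$.

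\emph{Triviality of $\tau_\RS$.} Given the acyclicity, $\tau_\RS(\Gamma\setminus G;F_\chi)$ is a positive real number, and by the Cheeger--M\"uller theorem \cite{BZ92,M78}---applicable since $\Gamma\setminus G$ is a closed odd-dimensional manifold and $h_\chi$ is a parallel metric---it coincides with the Reidemeister torsion of $(\Gamma\setminus G,\chi)$. I would evaluate the latter using the multiplicativity of Reidemeister torsion in fibre bundles (as in Milnor's treatment of Whitehead torsion): for $F\to E\to B$ with flat coefficients $\mathcal E$, the torsion of $E$ equals the torsion of $B$ with coefficients in the graded flat bundle $H^*(F;\mathcal E|_F)$. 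Applied to the central torus bundle $T^2=(\Gamma\cap Z)\setminus Z\to\Gamma\setminus G\to\bar\Gamma\setminus(G/Z)$ over the $3$-dimensional Heisenberg nilmanifold $\bar\Gamma\setminus(G/Z)$ (with $\bar\Gamma$ the image of $\Gamma$) this gives: if $\chi|_{\Gamma\cap Z}\neq1$, then $F_\chi$ is fibrewise acyclic, so the coefficient system $H^*(T^2;F_\chi|_{T^2})$ vanishes and the torsion is that of the zero complex, namely $1$; if $\chi|_{\Gamma\cap Z}=1$, then $F_\chi=p^*F_{\bar\chi}$ for a nontrivial character $\bar\chi$ of $\bar\Gamma$, the central monodromy on $H^*(T^2;\C)$ is trivial, and the torsion of $\Gamma\setminus G$ is that of the complex $C^\bullet(\bar\Gamma\setminus(G/Z);F_{\bar\chi})\otimes H^*(T^2;\C)$ (the second factor carrying the zero differential); since this first factor is acyclic by the previous part and $\chi(T^2)=0$, the tensor-product formula for torsion yields $\tau_\RS(\Gamma\setminus G;F_\chi)=\tau_\RS(\bar\Gamma\setminus(G/Z);F_{\bar\chi})^{\chi(T^2)}=1$.

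\emph{Main obstacle.} The subtle point is the second step: making precise the fibre-bundle multiplicativity of Reidemeister torsion in the case where the flat bundle is pulled back from the base (hence not fibrewise acyclic) and bookkeeping the Euler-characteristic exponents. One may alternatively appeal to the known triviality of the $R$-torsion of nilmanifolds with acyclic flat unitary line bundles; the first step, by contrast, is routine once the central fibration and the triviality of its monodromy are set up.
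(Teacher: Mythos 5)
Your argument is correct in substance and reaches the lemma along a route close to, but not identical with, the paper's. For the acyclicity the paper also fibers the nilmanifold, but it splits into two cases: if $\chi|_{\Gamma\cap[G,G]}\neq1$ it uses the $T^3$-fibration over $T^2$ coming from $[G,G]$ together with the Leray--Serre spectral sequence, and if $\chi|_{\Gamma\cap[G,G]}=1$ it uses a tower of three circle bundles and Gysin sequences; your induction over central quotients is an interchangeable (and if anything more general) formulation of the same idea, and your claims about the triviality of the monodromy and the descent of $\chi$ are fine. The genuine difference is in the torsion statement: you pass through Cheeger--M\"uller and then need combinatorial multiplicativity of Reidemeister torsion for fiber bundles --- exactly the point you flag as the main obstacle. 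Note also that in your fiberwise-acyclic case the conclusion is not simply ``the torsion of the zero complex'': the correct multiplicativity statement carries an extra factor of the fiber torsion raised to the power $\chi(\text{base})$, which is harmless here only because the base is odd-dimensional (and the $T^2$-fiber torsion with acyclic unitary coefficients is itself $1$), so this bookkeeping would have to be made explicit. The paper avoids all of this by staying on the analytic side: having established acyclicity, it quotes L\"uck--Schick--Thielmann \cite{LST98}, Corollary~0.8 for the fiberwise-acyclic case and Corollary~0.9 for circle bundles with coefficients pulled back from the base, which yield $\tau_\RS=1$ directly, with no appeal to Cheeger--M\"uller. So the gap you acknowledge is real as stated, but it is closed precisely by that reference (factor your central $T^2$-bundle into circle bundles and apply Corollary~0.9, or follow the paper's case division); with that substitution your proof is complete.
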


	\begin{proof}
		Suppose for now that $\chi$ does not vanish on $\Gamma\cap[G,G]$.
		Recall from \cite[Section~2]{R72} or Lemma~\ref{L:Gamma} that $\Gamma\cap[G,G]$ is a lattice in $[G,G]$ and so is the image of $\Gamma$ under the canonical homomorphism $G\to G/[G,G]\cong\R^2$ with (abelian) fiber $[G,G]$.
		Hence, modding out the lattice, we obtain a fibration $p\colon\Gamma\setminus G\to B$ with a 2-torus $B\cong T^2$ as a base and with typical fiber a 3-torus, $V:=\frac{[G,G]}{\Gamma\cap[G,G]}\cong T^3$.
		Using the K\"unneth theorem one readily shows that the cohomology of a torus $T^n\cong\R^n/\Z^n$ with coefficients in any nontrivial flat complex line bundle is acyclic.
		As $\chi$ is nontrivial on $\Gamma\cap[G,G]$, we therefore have $H^*(V;F_\chi)=0$.
		Hence, $H^*(\Gamma\setminus G;F_\chi)=0$ by the Leray--Serre spectral sequence, and $\tau_\RS(\Gamma\setminus G;F_\chi)=1$ according to \cite[Corollary~0.8]{LST98}.

		If $\chi$ vanishes on $\Gamma\cap[G,G]$, then there exists a flat complex line bundle $\tilde F$ over $B$ such that $F_\chi=p^*\tilde F$.
		As $\chi$ was assumed to be nontrivial, $\tilde F$ must be nontrivial as well.
		Hence, $H^*(B;\tilde F)=0$.
		Let $Z'$ denote a 1-parameter subgroup in the center $Z$ that intersects $\Gamma$ nontrivially.
		The canonical homomorphism $G\to G/[G,G]$ factors into a sequence of homomorphisms $G\to G/Z'\to G/Z\to G/[G,G]$.
		Modding out the lattice, this yields a tower of circle bundles 
		\begin{equation}\label{E:tower}
			\Gamma\setminus G\xrightarrow{p_3}E_2\xrightarrow{p_2}E_1\xrightarrow{p_1}B,
		\end{equation}
		such that $p=p_1\circ p_2\circ p_3$.
		Using the corresponding Gysin sequences we obtain, successively, $H^*(E_1;p_1^*\tilde F)=0$, $H^*(E_1;p_2^*p_1^*\tilde F)=0$, and $H^*(\Gamma\setminus G;F_\chi)=0$ for we have $p_3^*p_2^*p_1^*\tilde F=p^*\tilde F=F_\chi$.
		Moreover, $\tau_\RS(\Gamma\setminus G;F_\chi)=1$ according to \cite[Corollary~0.9]{LST98}.
	\end{proof}

	For a unitary representation $\rho\colon\Gamma\to U(k)$ we consider the positive real number
	\[
		R(\Gamma,\rho):=\frac{\|-\|_{\mathcal D_{\Gamma\setminus G},g_{\Gamma\setminus G},h_\rho}^{\sdet H^*(\Gamma\setminus G;F_\rho)}}{\|-\|_\RS^{\sdet H^*(\Gamma\setminus G;F_\rho)}}.
	\]
	If $\rho$ is acyclic, that is, if $H^*(\Gamma\setminus G;F_\rho)=0$, then $R(\Gamma,\rho)=\frac{\tau_\RS(\Gamma\setminus G;F_\rho)}{\tau(\Gamma\setminus G,\mathcal D_{\Gamma\setminus G},F_\rho,g_{\Gamma\setminus G},h_\rho)}$, by the very definition.
	Hence, combining the theorem with Lemma~\ref{L:RS.Tor} we obtain
	\begin{equation}\label{E:R.G.chi}
		R(\Gamma,\chi)=1
	\end{equation}
	for every nontrivial character $\chi\colon\Gamma\to U(1)$.
	According to \cite[Proposition~3.18]{H22} the quantity $R(\Gamma,\chi)$ depends continuously on $\chi$.
	Hence, \eqref{E:R.G.chi} remains true for all unitary characters $\chi$.

	If $\rho\colon\Gamma\to U(k)$ is irreducible then there exists a sublattice $\Gamma'$ in $\Gamma$ and a unitary character $\chi'\colon\Gamma'\to U(1)$ such that $\rho$ is isomorphic to the representation of $\Gamma$ induced by $\chi'$, see \cite[Lemma~1]{B73}.
	Hence, $R(\Gamma,\rho)=R(\Gamma',\chi')$ by \cite[Lemma~4.3(b)]{H22} and \eqref{E:R.G.chi} yields 
	\begin{equation}\label{E:R.G.rho}
		R(\Gamma,\rho)=1
	\end{equation}
	for irreducible $\rho$.
	Clearly, $R(\Gamma,\rho_1\oplus\rho_2)=R(\Gamma,\rho_1)\cdot R(\Gamma,\rho_2)$ for any two unitary representations $\rho_1$ and $\rho_2$ of $\Gamma$, see \cite[Lemma~4.3(a)]{H22}.
	Hence, \eqref{E:R.G.rho} remains true for all finite dimensional unitary representation $\rho$ of $\Gamma$.
	This completes the proof of the corollary.

\section*{Acknowledgments}
	This research was funded in whole or in part by the Austrian Science Fund (FWF) Grant DOI 10.55776/P31663.



\end{document}